\documentclass[psamsfonts]{amsart}

\usepackage{amssymb,amsfonts}
\usepackage[all,arc]{xy}
\usepackage{enumerate}
\usepackage{mathrsfs}
\usepackage{amsmath}
\usepackage{bbm}
\usepackage{tikz}
\usepackage{relsize}
\usepackage{pgfplots}
\usepackage{hyperref}
\usepackage[alphabetic]{amsrefs}
\usepackage{mathtools}
\usepackage{thmtools}
 \pgfplotsset{compat=1.18}
 \usetikzlibrary{pgfplots.groupplots}
\usetikzlibrary{arrows.meta}

\newtheorem{thm}{Theorem}
\newtheorem{cor}[thm]{Corollary}

\newtheorem{lem}[thm]{Lemma}

\theoremstyle{definition}
\newtheorem{defn}[thm]{Definition}

\newtheorem{claim}{Claim}



\theoremstyle{remark}

\newcommand{\Q}{\mathbb{Q}}

\newcommand{\R}{\mathbb{R}}

\newcommand{\e}{\varepsilon}

\newcommand{\C}{\mathbb{C}}

\newcommand{\Folner}{F\o lner}

\newcommand{\Z}{\mathbb{Z}}
\newcommand{\N}{\mathbb{N}}

\newcommand{\E}{\mathbb{E}}

\newcommand{\oh}{{\rm o}}

\usepackage{cancel}

\newcommand{\ch}{\mathbbm{1}}
\newcommand{\vertiii}[1]{{\left\vert\kern-0.25ex\left\vert\kern-0.25ex\left\vert #1 
		\right\vert\kern-0.25ex\right\vert\kern-0.25ex\right\vert}}

\newcommand{\M}{\mathcal{M}}
\newcommand{\D}{\mathbb{D}}

\newcommand{\cesE}{\mathop{\mathbb{E}}\displaylimits}
\newcommand{\cpt}{{cpt}}
\newcommand{\wm}{{wm}}
\newcommand{\aper}{{aper}}
\newcommand{\pret}{{pret}}
\newcommand{\exact}{{exact}}
\newcommand{\ext}{\mathrm{ext}}
\newcommand{\apx}{\mathrm{apx}}


\usepackage[letterpaper,margin=1.25in]{geometry}
\usepackage{microtype,needspace}
\allowdisplaybreaks[1]
\hypersetup{hidelinks,pdftitle={Monochromatic almost-Pythagorean quadruples: current draft}}
\title{Monochromatic Almost-Pythagorean Quadruples}
\author{James Leng, Redmond McNamara, and Andreas Mountakis}
\date{}

\renewcommand{\pmod}[1]{\text{ }(\mathrm{mod}\ #1)}

\begin{document}
\maketitle
\begin{abstract}
We prove that for any set $A$ with positive multiplicative upper density, there exist $x$, $y$, $z$ and $w$ in $A$ such that  
$x^2 + y^2w^2 = z^2$, generalizing a result of Frantzikinakis, 
Klurman and Moreira. 
Our techniques apply more broadly to a wide class of systems of equations, allowing us to solve these equations simultaneously and furthermore to restrict the w variable in an entirely different multiplicatively dense set. To do this, we prove a novel multiple recurrence result for multiplicative dynamical systems.
\end{abstract}

\section{Introduction}
An equation is called \emph{partition regular} if, for every 
partition of the natural numbers into finitely many colors 
$\N = C_1 \cup \cdots \cup C_r$ there exists one color $C_i$ 
and variables, in this case $x$, $y$ and $z$, in $C_i$ that 
satisfy the equation. Partition regularity is a fundamental 
question in Ramsey theory and combinatorics more broadly, see 
\cite{Rado-original,dinasso-luperi}. In \cite{Erdos-Graham}, Erd\H{o}s and Graham asked whether the Pythagorean
equation
$$x^2 + y^2 = z^2$$
is partition regular. In \cite{Fra-Klu-Mor}, Frantzikinakis, Klurman and Moreira showed that the equation \begin{equation}x^2 + n^2 = z^2\end{equation} is partition regular if you only require that $x$ and $z$ are the same color but not $n$. 

Separately, Brauer's theorem \cite{Brauer1928} implies that \begin{equation} \label{BH} x + yw = z \end{equation} is partition regular (see also \cite{Bergelson2010,Hindman2011}). In this paper, we generalize both of these results.
\begin{thm}\label{intro:monochromatic}
The equation \begin{equation} \label{bigeq1} x^2 + y^2 w^2 = z^2 \end{equation} is partition regular.
\end{thm}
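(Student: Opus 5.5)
We will derive Theorem~\ref{intro:monochromatic} from a density statement. We aim to show that every set $A\subseteq\N$ of positive upper multiplicative density contains $x,y,z,w$ with $x^2+y^2w^2=z^2$. Since one color in any finite partition has positive upper multiplicative density, this suffices.

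The starting point is the parametrization of Pythagorean triples. For $m>n$ and any $k$, the triple
$$x=k(m^2-n^2),\qquad yw=2kmn,\qquad z=k(m^2+n^2)$$
is a solution. Following Frantzikinakis--Klurman--Moreira, we restrict to a subfamily of $(m,n)$ on which the triple is well approximated by multiplicative shifts. We take $m=n+\ell$ with $\ell$ drawn from a suitable set (e.g.\ $\ell$ divisible by a highly composite $Q$). Then $m^2-n^2=\ell(2n+\ell)$ and $m^2+n^2=2n^2+2n\ell+\ell^2$. We must then find $k$ with
$$k\,\ell(2n+\ell)\in A,\qquad k(m^2+n^2)\in A,\qquad 2kmn=yw,\ y,w\in A.$$
The natural splitting is $y=k\cdot a$ and $w=b$ with $ab=2mn$. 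That is, we need $w$ to be a dilation ratio that itself lies in $A$. This is where Brauer-type structure enters.

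The key steps, in order, are as follows.

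\textbf{Step 1.} Transfer to a multiplicative dynamical system. Via a Furstenberg correspondence principle for the multiplicative semigroup $(\N,\cdot)$, we obtain a system $(X,\mu,(T_n))$ and a set $F$ with $\mu(F)>0$. The desired configuration then reduces to showing that
$$\mu\bigl(T_{m^2-n^2}^{-1}F\cap T_{m^2+n^2}^{-1}F\cap T_{2mn/w}^{-1}F\bigr)>0$$
for some admissible $m,n$ and some $w$ that is itself a return time, i.e.\ $w\in A$ with $\mu(F\cap T_w^{-1}F)$-type largeness.

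\textbf{Step 2.} Establish a multiple recurrence theorem averaging over $m,n$ in a structured set, with a uniform lower bound. Following FKM, we use a decomposition of multiplicative functions, via the spectral theorem and Bochner--Herglotz, into an aperiodic part and a pretentious part (finite-order characters times $n^{it}$). The aperiodic contribution of the averaged correlations vanishes by Matomäki--Radziwiłł--Tao/concentration estimates for the binary quadratic forms. The pretentious part is made nearly invariant by choosing $m,n$ along a multiplicatively structured progression, so that each of $m^2\pm n^2$ and $2mn$ is close to a fixed dilate.

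\textbf{Step 3.} Insert the $w$ variable. We need the positive lower bound from Step~2 to persist when the third shift is divided by $w$, for $w$ ranging over a set of return times of $F$ of positive density. Concretely, we would prove that the lower bound from Step~2 holds uniformly over $w$ in a multiplicatively large set $W$. Intersecting $W$ with the set $\{w:\mu(F\cap T_w^{-1}F)>\mu(F)^2/2\}$, which is large by multiplicative van der Corput/mean ergodic arguments, then produces a good $w$, with $y=k\cdot 2mn/w$ forced to lie in $A$ as well. Divisibility is handled by taking $w\mid 2mn$, which is arranged by restricting $n$ to multiples of $w$, or by allowing rational dilations in a $\Q_{>0}$-action extension.

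\textbf{The main obstacle} is Step~3: obtaining genuine multiple recurrence that is uniform in the extra dilation $w$, simultaneously with the pretentious/aperiodic decomposition. The pretentious approximation involves characters whose conductors may depend on $w$, so the first approach is to pass to a single $\Q_{>0}$-system in which all $T_w$ act, and apply the structure theorem there. We would then show that the ``pretentious factor'' is an inverse limit of finite rotations times Archimedean characters. On such a factor the joint return to $F$ along $m^2-n^2$, $m^2+n^2$, $2mn/w$ and $w$ can be forced by choosing everything divisible by a large factorial and $w$ close to $1$ in the $n^{it}$ sense.
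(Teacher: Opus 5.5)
There is a genuine gap, and it lies in Steps~2--3 rather than only in the uniformity you flag.

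The quantity you want to bound below, $\mu(T^{-1}_{m^2-n^2}F\cap T^{-1}_{m^2+n^2}F\cap T^{-1}_{2mn/w}F)$ averaged over $(m,n)$, is a three-term correlation. The spectral theorem and the aperiodic/pretentious splitting you borrow from Frantzikinakis--Klurman--Moreira only describe two-term correlations $\langle T_a\ch_F,T_b\ch_F\rangle=\int_{\M}f(a/b)\,d\sigma(f)$. Nothing in your Step~2 exploits $w$. If it worked, it would apply with $w=1$, give monochromatic Pythagorean triples, and settle the Erd\H{o}s--Graham problem.

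The missing idea is that $w$ has to be averaged, not fixed. The paper averages over $w$ along a multiplicative F\o lner sequence. It then applies van der Corput in $w$ to kill the weak-mixing component of $\ch_F$ in the only slot containing $w$, namely the shift by $2mn/(w(m^2+n^2))$ after normalizing by the hypotenuse. That slot is then approximated by eigenfunctions $G_j$. Since $T_{c/w}G_j=g_j(c/w)G_j$, the triple integral collapses to $g_j(\cdot)\int\ch_F\cdot T_{(m^2-n^2)/(m^2+n^2)}\ch_F\cdot G_j\,d\mu$, which is effectively a two-term expression. Only at this point do the spectral theorem, the Frantzikinakis--Host and Frantzikinakis--Klurman--Moreira vanishing results for aperiodic functions, and the concentration estimates apply. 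The resulting lower bound is an averaged statement in $w$, not a uniform one.

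Second, Step~3 produces the wrong kind of $w$. The set $\{w:\mu(F\cap T_w^{-1}F)>\mu(F)^2/2\}$ consists of return times, i.e.\ $w$ for which $A\cap w^{-1}A$ is large. This says nothing about $w\in A$. For example, $A=\{n:\Omega(n)\equiv1\bmod 3\}$ has multiplicative density $1/3$, yet every return time satisfies $\Omega(w)\equiv0\bmod3$, so none lies in $A$. For the same reason you cannot require $w$ to be close to $1$ in the $n^{it}$ sense, since a dense $A$ can avoid all such $w$.

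The paper handles this differently:
\begin{itemize}
\item It puts $\ch_A(w)$ (more generally $\ch_B(w)$) as a weight inside the $w$-average.
\item It absorbs the Archimedean phases by letting the sector of ratios $m/n$ depend on $w$. On that sector, $\log\bigl((m^2-n^2)/(m^2+n^2)\bigr)$ is within $\delta$ of $0$, and $\bigl(2mn/(w(m^2+n^2))\bigr)^{it_j}$ is within $\delta$ of $1$ for the finitely many frequencies $t_j$ in the eigenfunction approximation.
\item After the remaining reductions (auxiliary averages over $q_1,q_2,q_3$ that decouple the concentration estimates, and projections onto the exact pretentious factor and the compact Archimedean factor), the integral becomes $\int\ch_F\,F_1F_2\,d\mu$. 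Here $F_1,F_2$ are conditional expectations of $\ch_F$ onto two different factors, so positivity is not automatic.
\item Chu's inequality gives $\int\ch_F\,F_1F_2\,d\mu\ge\mu(F)^3$ independently of $w$. Averaging the weight then yields the lower bound $\overline d(A)\,\mu(F)^3-\eta>0$.
\end{itemize}
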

This theorem generalizes \cite{Fra-Klu-Mor} because it says not only can we find $x$ and $z$ which are a leg and the hypotenuse of a Pythagorean triple of the same color, we can also find $y$ and $w$ of the same color whose product is the other leg. And it generalizes the partition regularity of \eqref{BH} because it says even if we don’t color all the natural numbers and only color the perfect squares, we can still find a solution to \eqref{BH} among the perfect squares.

In fact, we can generalize \eqref{bigeq1} in more ways. We will say an equation is density regular if for any set $A$ with positive upper density we can find a solution to the equation where all of the variables lie in $A$. This is more restrictive than being partition regular because every partition will have at least one cell with positive upper density. In this paper, when we talk about density we will mean multiplicative density (see \eqref{upperdensity} for the definition). Our proof uses the techniques of ergodic theory and dynamical systems and therefore can establish the density regularity of \eqref{bigeq1}.

Furthermore, we can also generalize density regularity as follows: instead of fixing just one set with positive upper density, fix two, call them $A$ and $B$. Then we will show that there exist $x$, $y$, and $z$ in $A$ and $w$ in $B$ such that \eqref{bigeq1} holds. Our method also applies to a host of other equations. Thus, for example, we have the theorem
\begin{thm}\label{Pythagorean}\label{intro:density}
Let $a, b \in \N$ and let $A$ and $B$ be two sets of positive upper multiplicative density. Then there exist $x$, $y$ and $z$ in $A$ and $w$ in $B$ such that
\[
a^2 x^2 + b^2 y^2 w^2 = a^2 z^2.
\]
Applying this to the case of $a = b = 1$, we recover Theorem \ref{intro:monochromatic}.
\end{thm}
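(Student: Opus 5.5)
We plan to follow the strategy of Frantzikinakis--Klurman--Moreira: parametrize Pythagorean triples, reduce to a multiplicative recurrence statement, and prove that statement with the structure theory of multiplicative functions.

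\textbf{Parametrization.} Write $b y w = k(m^2-n^2)$, $a x = 2kmn$ and $a z = k(m^2+n^2)$. Equivalently, after rescaling, choose $m,n$ and a multiplier. Following \cite{Fra-Klu-Mor}, we set
\[
x = \lambda\, 2 b\, m n \cdot \ell, \qquad z = \lambda\, b\,(m^2+n^2)\cdot \ell, \qquad yw = \lambda\, a\,(m^2-n^2)\cdot \ell,
\]
with $m,n$ ranging over a structured set and the dilation $\lambda$ free. We then write $m^2-n^2=(m-n)(m+n)$ and aim for
\[
y = \lambda\,\ell\, a\,(m-n), \qquad w = m+n.
\]
The requirement becomes: find $m,n,\lambda$ such that $\lambda\, 2bmn \in A$, $\lambda b(m^2+n^2)\in A$, $\lambda a(m-n)\in A$ and $m+n\in B$. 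Absorbing constants into dilations of $A$, which preserve positive multiplicative density, this is a multiplicative multiple recurrence statement:
\[
A\cap (mn)^{-1}A\cap (m^2+n^2)^{-1}A\cap(m-n)^{-1}A\neq\emptyset \quad\text{for some } (m,n) \text{ with } m+n\in B.
\]

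\textbf{Transfer to dynamics.} Using a multiplicative Furstenberg correspondence principle, we pass to a finitely generated action $(T_r)_{r\in\Q^{>0}}$ on $(X,\mu)$ and a set $E$ of positive measure. It then suffices to show
\[
\lim \cesE_{m,n\in\Phi_N}\ \ch_B(m+n)\,\mu\bigl(E\cap T_{mn}^{-1}E\cap T_{m^2+n^2}^{-1}E\cap T_{m-n}^{-1}E\bigr)>0
\]
along a suitable weighted average over a grid, for instance $m,n$ restricted to progressions $\equiv 1 \pmod Q$ with $Q$ highly divisible.

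\textbf{Spectral decomposition.} By the spectral theorem for multiplicative actions, correlations decompose into integrals of products $f_1(mn)f_2(m^2+n^2)f_3(m-n)$ of completely multiplicative functions $f_i:\N\to S^1$. The $B$ weight is handled by replacing $\ch_B$ with a second correspondence, giving a function $g$ on a second system, and then taking a product system. We then use the Frantzikinakis--Host / \cite{Fra-Klu-Mor} decomposition of multiplicative functions into pretentious and aperiodic parts:
\begin{itemize}
\item Aperiodic components contribute nothing in the limit, since averages of $f(m-n)f'(m+n)\cdots$ over a grid vanish by the concentration estimates and Gowers uniformity of aperiodic multiplicative functions on binary linear forms, together with their behaviour on $m^2+n^2$.
\item Pretentious components behave like Archimedean characters $n^{it}$ times Dirichlet characters. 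Restricting $m,n\equiv1\pmod Q$ and $m/n$ near $1$, each is approximately $1$, up to the $n^{it}$ factors, which are handled by localizing $m,n$ to a short multiplicative scale.
\end{itemize}
The positive-density term is thus bounded below by $\mu(E)^{4}\,d(B)$-type quantities via Cauchy--Schwarz and the positivity of the projection onto the pretentious (rational-Kronecker plus Archimedean) factor.

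\textbf{Main obstacle.} The hardest step is the simultaneous control of the three non-linear arguments $mn$, $m^2+n^2$ and $m-n$ together with the independent set $B$ through $m+n$. The forms $m-n$ and $m+n$ are linear, but $m^2+n^2$ factors only over $\Z[i]$. We would first prove an extension of the \cite{Fra-Klu-Mor} concentration estimate to averages of $f_1(m-n)f_2(m+n)f_3(m^2+n^2)$, viewing $m^2+n^2=(m+in)(m-in)$ as the norm of a Gaussian integer and using uniformity for multiplicative functions on $\Z[i]$. We would then show that the aperiodic part of either $f_3$ or the $B$-side function kills the average. If that fails, the fallback is to choose a parametrization in which $m^2+n^2$ is placed in the free dilation, so that only linear forms need uniformity.
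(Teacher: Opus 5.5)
Your proposal has a genuine gap: you make the $B$-variable a polynomial in the parameters, $w=m+n$. The hypothesis on $B$ is only positive \emph{multiplicative} upper density, and that gives no control over an additive average such as $\cesE_{m,n}\ch_B(m+n)(\cdots)$. For example, $B=\bigcup_{u\geq3}\Phi_u$ (the paper's auxiliary boxes) has multiplicative upper density $1$ along the \Folner{} sequence $(\Phi_u)$. Yet $|B\cap[N]|=N^{o(1)}$, and every element of $B$ is divisible by $3^4$. On your grid $m\equiv n\equiv1\pmod Q$ with $3\mid Q$ one has $m+n\equiv2\pmod 3$, so $\ch_B(m+n)$ vanishes identically; on any grid, $\cesE_{m,n\in[N]}\ch_B(m+n)\to0$. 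A correspondence principle for $B$ only computes averages $\cesE_{k\in\Phi_H}\ch_B(kr_1)\cdots\ch_B(kr_s)$ over multiplicative \Folner{} sets. So ``a second correspondence and a product system'' does not apply to the weight $\ch_B(m+n)$, and the factor $d(B)$ in your lower bound has no source.

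The missing idea is to make the $B$-variable a \emph{free multiplicative divisor} of a leg. The paper applies Theorem~\ref{intro:general} with $P_1=2anm$, $P_2=n^2-b^2m^2$, $P_3=n^2+b^2m^2$ (Corollary~\ref{app:scaled}). It takes the hypotenuse $z\in A$ as base point, $w=\ell\in B$ arbitrary and independent of $(n,m)$, and sets $x=zP_2/P_3$ and $y=zP_1/(\ell P_3)$. The identity $a^2P_2^2+b^2P_1^2=a^2P_3^2$ then gives the equation for every $\ell$. The set $B$ enters only through the multiplicative average $\cesE_{\ell\in\Psi_L}\ch_B(\ell)$, which is what produces $\overline d_\Psi(B)$ at the end.

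The free $\ell$ is also what repairs your spectral step, which is not valid as stated. The spectral theorem represents only pair correlations $\langle T_rF,F\rangle=\int_{\M}f(r)\,d\sigma_F(f)$. It does not write the quadruple intersection $\mu(E\cap T_{mn}^{-1}E\cap T_{m^2+n^2}^{-1}E\cap T_{m-n}^{-1}E)$ as an integral of $f_1(mn)f_2(m^2+n^2)f_3(m-n)$. With no extra averaging parameter, you have no way to dispose of weak-mixing components.

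In the paper, van der Corput in $\ell$ kills the weak-mixing part of $T_{P_1/(\ell P_3)}F$. The compact part is a sum of eigenfunctions $G_j$, so $\int F\,T_{P_2/P_3}F\,T_{P_1/(\ell P_3)}G_j\,d\mu=g_j\bigl(P_1/(\ell P_3)\bigr)\int F\,T_{P_2/P_3}F\,G_j\,d\mu$. Only then is the spectral theorem applied, to the single shifted function $T_{P_2/P_3}F$. This is followed by the aperiodic/pretentious splitting, the concentration estimates, and the reduction to the exact pretentious and compact Archimedean factors. The leftover phase $\ell^{-it_j}$ is absorbed by choosing the sectors $J_{\ell,\delta}$ depending on $\ell$ (Lemma~\ref{app:sector-lemma}). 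Positivity comes from Chu's inequality for two different conditional expectations.

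Two smaller points. First, your residues $m\equiv n\equiv1\pmod Q$ force $Q\mid m-n$, so the character part of a pretentious $f(m-n)$ does not concentrate. The paper instead averages over residues $(v,w)$ with $\gcd(P_j(v,w),Q)=q_j$, $P_1(v,w)\equiv q_1\pmod{q_1^2}$, and $P_2(v,w)\equiv P_3(v,w)\equiv1\pmod{q_1}$. Second, the uniformity over $m^2+n^2$ that you single out as the main obstacle is already supplied by the Frantzikinakis--Klurman--Moreira estimates for irreducible quadratic forms. The real obstacle is structural, not analytic.
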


We can also establish density regularity for 
\[
(w-y-xz)(w+\lambda y-xz)=(\lambda+1)^2(xz)^2.
\]
and 
\[
(y-w+xz)(b(b+1)y-w+xz)=(b(b+1)-1)^2(xz)^2.
\]
for any fixed coefficients $\lambda$ and $b$ in $\N$ provided in the second case that $b \geq 2$. These examples and others are worked out in the appendix. In each case, we use that there is a parameterization of solutions to each of these equations in terms of polynomials. For instance, for any natural numbers $k$, $n$ and $m$ with $n>m$, one can check that
\[
k(n^2 - m^2), \ \ 2kmn, \ \ k(n^2 + m^2)
\]
form a Pythagorean triple.
Then each of the partition regularity results follows as a corollary from our main theorem. In order to state the theorem, we need the following somewhat technical definition. This definition essentially says that the polynomials we can consider as part of our parameterization cannot have any modular obstructions or obstructions in the form of level sets of the Archimedean character $n \mapsto n^{it}$ to our method working.
\ifdefined\GoodRecurrenceDefined
\else
\gdef\GoodRecurrenceDefined{}
\begin{defn}\label{def:good-multiplicative-recurrence}
Let $P_1,P_2,P_3\in\Z[n,m]$ be nonconstant polynomials.
We say that the ordered triple $(P_1,P_2,P_3)$ is
\emph{good for multiplicative recurrence} if the following
conditions hold.

\begin{enumerate}
\item \emph{Factorization.} The three polynomials are homogeneous
of the same degree $D$. The first two are products of integer
linear forms, while the third may also contain one irreducible
binary quadratic form:
\[
P_j=\prod_{\nu=1}^{D}L_{j,\nu}\;(j=1,2)
\text{ \ \ and \ \ }
P_3=R^{\gamma}\prod_{\nu=1}^{D-2\gamma}L_{3,\nu},
\]
where $\gamma\in\{0,1\}$, each $L_{j,\nu}(n,m)=an+bm$
has integer coefficients not both zero, and $R\in\Z[n,m]$
is a homogeneous quadratic form irreducible over $\Q$.
When $\gamma=0$, the factor $R$ is omitted.

\item \emph{Linear factors in the ratios.} Let
$\widetilde P_3=\prod_{\nu=1}^{D-2\gamma}L_{3,\nu}$ be the
product of the linear factors of $P_3$, with the empty product
equal to $1$. Cancel proportional linear factors in
\[
\frac{P_1}{\widetilde P_3}
\text{ \ \ and \ \ }\frac{P_2}{\widetilde P_3}.
\]
Both reduced ratios must be nonconstant. The first must contain
a linear factor with multiplicity one in its numerator or
denominator. The second must also contain such a factor, and
that factor must be linearly independent of every linear factor
remaining in the first ratio. Here two linear forms are linearly
independent if neither is a rational multiple of the other.

\item\label{item:good-congruences} \emph{Congruences.} Let $\omega_j$ be the number of
nonconstant irreducible factors of $P_j$, counted with multiplicity. Thus
$\omega_1=\omega_2=D$ and $\omega_3=D-\gamma$.
There is an integer $W\geq1$ with the following property.
Consider integers $Q,q_1,q_2,q_3\in\N$ satisfying
\[
W\mid q_1,\;q_j^{\omega_j}\mid Q\;(j=1,2,3)
\text{ \ \ and \ \ }\gcd(q_j,q_k)=1\;(j\ne k),
\]
and, if $\gamma=1$, every prime divisor $p$ of $q_3$ admits
a solution to
\begin{equation}\label{eq:good-root-condition}
R(1,a)\equiv0\pmod p.
\end{equation}
We call such choices \emph{admissible}. For every admissible
choice there must be $v,w\in\N$ such that
\[
\gcd(P_j(v,w),Q)=q_j\;(j=1,2,3)
\]
and
\[
P_1(v,w)\equiv q_1\pmod{q_1^2}
\text{ \ \ and \ \ }
P_2(v,w)\equiv P_3(v,w)\equiv1\pmod{q_1}.
\]
\item \emph{Sectors.} There is a $\kappa>0$ such that
$P_j(x,1)>0$ for $x>\kappa$ and $j=1,2,3$.
For every finite nonempty set $\Theta\subseteq\R$ and every
$\delta>0$, there are constants $\kappa<a<b<\infty$ and
$\eta>0$ such that, for each $\ell\in\N$, we can choose an
open interval $J_{\ell,\delta}\subseteq[a,b]$ of length at
least $\eta$ on which
\[
\left|\log\frac{P_2(x,1)}{P_3(x,1)}\right|\leq\delta
\text{ \ \ and \ \ }
\max_{t\in\Theta}
\left|\left(\frac{P_1(x,1)}{\ell P_3(x,1)}\right)^{it}-1\right|
\leq\delta.
\]
The constants $a,b,\eta$ are independent of $\ell$; they may
depend on the triple, $\Theta$, and $\delta$.
\end{enumerate}

A collection $(P_{i,1},P_{i,2},P_{i,3})_{1\leq i\leq r}$
is good for multiplicative recurrence if each of its triples
is good in this sense. The degrees, quadratic forms, and
constants in the definition may depend on $i$.
\end{defn}
\fi
We are now in a place where we can state our main theorem
\begin{thm}\label{intro:general}
\xdef\GeneralRecurrenceTheoremNumber{\arabic{thm}}
Let $(P_{i,1},P_{i,2},P_{i,3})_{1\leq i\leq r}$ be a collection
that is good for multiplicative recurrence in the sense of
Definition~\ref{def:good-multiplicative-recurrence}. Suppose that
$A\subseteq\N$ and $B\subseteq\N^r$ have positive multiplicative
upper density. Then there are $a\in A$,
$\ell=(\ell_1,\ldots,\ell_r)\in B$, and $n_i,m_i\in\N$ such that
\[
a\frac{P_{i,2}(n_i,m_i)}{P_{i,3}(n_i,m_i)}\in A
\text{ \ \ and \ \ }
a\frac{P_{i,1}(n_i,m_i)}{\ell_iP_{i,3}(n_i,m_i)}\in A
\]
for every $1\leq i\leq r$.
\end{thm}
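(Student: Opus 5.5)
We will deduce Theorem~\ref{intro:general} from a multiple recurrence statement for multiplicative actions, following the Frantzikinakis--Klurman--Moreira strategy. First, by a Furstenberg correspondence principle for the multiplicative semigroup $(\N,\cdot)$, acting on $A$ through dilations and on $B\subseteq\N^r$ through the coordinatewise action of $\N^r$, we obtain a measure preserving action $(T_n)_{n\in\Q^+}$ of the positive rationals on $(X,\mu)$, a set $E\subseteq X$ with $\mu(E)>0$ corresponding to $A$, and a positive density object corresponding to $B$. It then suffices to show
\[
\limsup_{N}\;\mathbb{E}^{\log}_{\ell\in B_N}\;\mathbb{E}_{n_i,m_i}\; \mu\Bigl(E\cap\bigcap_{i=1}^r T_{P_{i,2}/P_{i,3}}^{-1}E\cap T_{P_{i,1}/(\ell_iP_{i,3})}^{-1}E\Bigr)>0,
\]
where the averages over $(n_i,m_i)$ are weighted averages over a suitable sector of $\N^2$. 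Here the quotients are rationals, so they act through the $\Q^+$ action, and we only need positivity, not a limit.

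The core step is a structure theorem for the relevant correlations. Via the spectral theorem, the integrals are controlled by multiplicative functions $f:\N\to\mathbb{S}^1$. We would split $\ch_E$ into an \emph{aperiodic} part, whose spectral measure is supported on multiplicative functions that are aperiodic, and a \emph{pretentious} part, whose spectral measure sits on functions pretending to be $\chi(n)n^{it}$. For the aperiodic part we invoke concatenation and Gowers-uniformity estimates for aperiodic multiplicative functions along binary linear forms, and we invoke the quadratic form $R$ through Dirichlet-type estimates. These show that aperiodic contributions vanish in the average. Condition~(2), which requires a multiplicity-one linear factor, and independence for the second ratio, is exactly what is needed for these estimates, because it lets us isolate one linear form on which the aperiodic function appears only once.

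For the pretentious part we exploit the freedom in choosing $(n_i,m_i)$. We restrict to $n\equiv v$, $m\equiv w$ modulo a large $Q$ built from the finitely many small primes where the relevant $f$'s deviate. Condition~(3) gives admissible residue classes in which $f(P_{i,2}/P_{i,3})$ and $f(P_{i,1}/P_{i,3})$ are close to $1$ on the relevant congruence classes. Here $q_1$ absorbs the $\ell_i$ factor, which is forced to be divisible by $W$ after passing to a suitable dilate of $B$. Condition~(4) handles the Archimedean characters. Restricting $n/m$ to the interval $J_{\ell,\delta}$ makes $n^{it}$ twists close to $1$ for the finitely many relevant $t\in\Theta$, uniformly in $\ell$. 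Then every term is close to $\mu(E\cap\cdots)$ for the pretentious projection, and positivity follows from a van der Corput/Cauchy--Schwarz positivity argument such as $\int \ch_E\,\E(\ch_E\mid\mathcal{Z})^{2r+1}\geq\mu(E)^{2r+2}$.

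I expect the main obstacle to be the uniformity in $\ell$. The pretentious approximation must hold simultaneously for all $\ell\in B$ in a positive density subset. This forces the finitely many $t$'s and characters to be fixed before averaging over $\ell$, so a compactness and finite approximation step for the spectral measure is needed, with the sectors $J_{\ell,\delta}$ chosen depending on $\ell$. The other delicate point is handling the $r$ triples jointly with independent variables $(n_i,m_i)$. I would try to treat them as independent averages, so that the aperiodic estimates factor through a single $i$ by Cauchy--Schwarz.
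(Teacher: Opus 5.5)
Your outline has the right ingredients (correspondence principle, aperiodic/pretentious split, residues from condition~(3), sectors from condition~(4)). There is, however, a genuine gap exactly where you locate the main obstacle.

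\textbf{The handling of $\ell$.} You treat the average over $\ell\in B$ as something the pretentious approximation must survive uniformly, and appeal to a ``compactness and finite approximation step for the spectral measure'' to get finitely many frequencies. No such step exists. The pretentious (even the exact) part of $\ch_E$ may have continuous spectral measure, say spread over $\{n\mapsto n^{it}:t\in[0,1]\}$. On the sectors supplied by condition~(4), $P_1/P_3$ is bounded above and below, so $\log\bigl(P_1/(\ell P_3)\bigr)=-\log\ell+O(1)$ and $\int_0^1\bigl|(P_1/(\ell P_3))^{it}-1\bigr|^2\,dt\to2$ as $\ell\to\infty$. Hence no choice of residues or of $J_{\ell,\delta}$ makes $T_{P_1/(\ell P_3)}F$ close to $F$ on such components. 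Letting $q_1$ ``absorb'' $\ell$ could at best cancel the non-Archimedean part of $f(\ell)$, never $\ell^{-it}$.

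These components must instead be shown to contribute nothing, and in the paper that is exactly the job of the $\ell$-average. Averaging over a multiplicative \Folner{} sequence $\Psi_L$ (weighted by $\ch_B$) and applying van der Corput kills $F_{\wm}$ in the slot $T_{P_1/(\ell P_3)}$. That slot can then be replaced by finitely many eigenfunctions $G_j$. Next, averaging $q_1$ over $\Phi_{Q_1}$ discards every $g_j$ not exactly equal to $n\mapsto n^{it_j}$. Only then is the $\ell$-dependent sector used, to control the finitely many phases $\ell^{-it_j}$.

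The same reduction is what makes your aperiodic step legitimate. Both slots share $(n,m)$, so the spectral theorem cannot control $\int H\,T_aF'\,T_bF''\,d\mu$ with two varying shifts. Once $T_{P_1/(\ell P_3)}G_j$ is a scalar multiple of $G_j$, Cauchy--Schwarz reduces matters to $\bigl\|\cesE\,g_j(\cdot)\,T_{P_2/P_3}F_{\aper}\bigr\|_2$, which is a one-function spectral quantity.

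\textbf{Condition~(3) alone is not enough.} It does not make $f(P_2/P_3)$ close to $1$ for a pretentious $f$. The concentration estimates leave the factors $h_f(q_2)$ and $\overline{h_f(q_3)}$. When $P_3$ contains a quadratic form $R$, they also leave a mismatch between $e^{DF_N(f,y)}$ from $P_2$ and $e^{sF_N(f,y)+G_{R,N}(f,y)}$ from $P_3$. This is a phase governed by $\sum_{y<p\le N}(2-\omega_R(p))\operatorname{Im}h_f(p)/p$, which for pretentious $f$ need not be small and need not even converge as $N\to\infty$. The paper removes all of these by averaging $q_2$ over $\Phi_{Q_1,Q_2}$ and $q_3$ over $\Phi_{Q_2,Q_3,R}$, which projects onto the exact factor where $h_f(p)=1$ for all large $p$.

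\textbf{Consequence for the positivity step.} The two slots end up in different factors. $F_1$ lies in the exact factor; it may retain continuous Archimedean spectrum, which is handled by $|\log(P_2/P_3)|\le\delta$ together with dominated convergence. $F_2$ lies in the compact Archimedean factor. Positivity therefore needs Chu's inequality for several $\sigma$-algebras, $\int_X F(F_1F_2)^r\,d\mu\ge(\int_X F\,d\mu)^{2r+1}$, rather than a one-factor H\"older bound. Note also that there are $2r$ shifted terms, so your exponents $2r+1$ and $2r+2$ are each off by one.
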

The deduction of Theorems~\ref{intro:monochromatic} and~\ref{intro:density} from this theorem appears in the appendix as the case $a=b=1$ of Corollary~\ref{app:scaled}, after relabeling the variables.

The proof goes via the Furstenberg correspondence. Informally, consider the space $X$ of subsets in $\Q_{>0}$, the positive rationals, and consider the point $A$ in that space. For a \Folner{} sequence $\Phi$ along which $A$ has positive density, we get a sequence of random variables where we take $A$ and shift all the elements by a random element of $\Phi$. Because $X$ is compact by Tychonoff's theorem, compactness of the space of probability measures on $X$ gives a subsequence along which the distributions of these random variables converge to a multiplication invariant measure $\mu$. Thus we have a compact set $X$ with an action of $\Q_{>0}$ under multiplication and a measure $\mu$ which is invariant for that action. Because $A$ had positive density to begin with, we find that if $f$ is the evaluation at $1$ function that $$\int_X f(x) d\mu(x) > 0$$ and in fact the integral is just the density of $A$ with respect to the \Folner{} sequence. Furthermore, the statement that $$\int_X T_a f \cdot T_b f \cdot T_c f d\mu > 0$$ for some $a, b$ and $c$ in $\Q_{>0}$ implies there exists $k \in \N$ (and in fact a positive upper density set of $k$) such that $ka, kb$ and $kc$ are all in $A$. Therefore, we can reduce Theorem~\ref{intro:general} to the following theorem.
\begin{thm}\label{intro:dynamical}
Let $(P_{i,1},P_{i,2},P_{i,3})_{1\leq i\leq r}$ be a collection that is good for multiplicative recurrence, and let $B\subseteq\N^r$ have positive multiplicative upper density along a multiplicative \Folner{} sequence $\Psi=(\Psi_L)$. Let $(T_q)_{q\in\Q_{>0}}$ be a measure-preserving multiplicative action on a probability space $(X,\mu)$, and let $E\subseteq X$ be measurable with $\mu(E)>0$. Write $F=\ch_E$ and $\beta=\overline d_\Psi(B)$ for the upper density of $B$ along $\Psi$.

For the auxiliary parameters $\mathbf q_i$ used below, let $Q_i$ be the modulus chosen in the proof and $v_i(\mathbf q_i),w_i(\mathbf q_i)$ the residues supplied by the congruence condition, and write
\[
n_i=Q_i n_i'+v_i(\mathbf q_i)
\text{ \ \ and \ \ }
m_i=Q_i m_i'+w_i(\mathbf q_i).
\]
For $r=1$ we omit the index and write $n',m'$. Put $U_{i,j}=P_{i,j}(n_i,m_i)$. The average below is normalized successively over $\ell\in\Psi_L$, the auxiliary parameters $\mathbf q$ used in the proof, and independent pairs $(n_i',m_i')\in[N]^2\cap S_{i,\ell_i,\delta}$. For every $\eta>0$ the sectors can be chosen so that, for all sufficiently small $\delta>0$,
\[
\varlimsup_{L\to\infty}\!\varlimsup_{Q_1\to\infty}
\!\varlimsup_{Q_2\to\infty}\!\varlimsup_{Q_3\to\infty}
\!\varlimsup_{N\to\infty}
\cesE_{\ell,\mathbf q,(n_i',m_i')_i}\ch_B(\ell)
\int_X F\prod_{i=1}^r
\bigl[T_{U_{i,2}/U_{i,3}}F\,T_{U_{i,1}/(\ell_i U_{i,3})}F\bigr]\,d\mu
\geq\beta\mu(E)^{2r+1}-\eta.
\]
In particular, some $\ell\in B$ and $n_i,m_i\in\N$ give a positive integral.
\end{thm}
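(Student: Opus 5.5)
The strategy follows the Frantzikinakis–Klurman–Moreira approach: decompose by spectral theory of multiplicative actions, then use the structure of the polynomial parametrization to control each piece.

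\textbf{Step 1: spectral decomposition.} Write $F=F_{\mathrm{pret}}+F_{\mathrm{aper}}$. Here $F_{\mathrm{pret}}$ is the projection onto the closed subspace spanned by the spectral measures supported on pretentious completely multiplicative functions. These are functions $f$ with $\mathbb D(f,\chi n^{it})<\infty$ for some Dirichlet character $\chi$ and real $t$. Using the bracket/Bochner representation of $\int \prod T_{q_k}F\,d\mu$ as an integral over the spectral measure on multiplicative functions, each term involving $F_{\mathrm{aper}}$ in at least one of the $2r$ shifted slots reduces to averages of
\[
f(P_{i,2}(n,m))\overline{f(P_{i,3}(n,m))}\quad\text{or}\quad f(P_{i,1})\overline{f(\ell_i P_{i,3})}
\]
over the progression-restricted sectors, with $f$ aperiodic.

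Condition (2) of Definition~\ref{def:good-multiplicative-recurrence} is used here. After cancelling common linear factors, each reduced ratio has a linear factor of multiplicity one. Moreover, the factor in the second ratio is linearly independent of the factors in the first. Given this, I would invoke the known result that averages of $\prod_\nu f_\nu(L_\nu(n,m))$ over two-dimensional boxes and sectors vanish when some $f_\nu$ is aperiodic and $L_\nu$ is independent of the other forms. This is the Frantzikinakis–Host / Matthiesen-type result for binary linear forms, with the quadratic factor $R$ handled as in FKM through Gaussian-type norms. Averaging over independent pairs $(n_i',m_i')$ lets me treat different $i$ separately, so the aperiodic contributions tend to $0$ as $N\to\infty$.

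\textbf{Step 2: the pretentious part.} I would replace $F$ by $F_{\mathrm{pret}}$ in every slot. Approximating in $L^2$, I may assume $F_{\mathrm{pret}}$ is spanned by finitely many components with characters of conductor dividing some $W'$ and finitely many Archimedean frequencies $\Theta$. Absorb $W'$ into $W$.

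For pretentious $f\sim\chi n^{it}$, the value $f(P(n,m))$ is governed by three factors:
\begin{itemize}
\item the $n^{it}$ part, controlled by the sector condition (4);
\item the character part, controlled by residues mod $W$;
\item the small-prime part, controlled by $\gcd(P,Q)$.
\end{itemize}
Condition (3) supplies residues $v,w$ modulo $Q$ with $\gcd(P_j(v,w),Q)=q_j$ prescribed. It also gives $P_1\equiv q_1 \pmod{q_1^2}$ and $P_2\equiv P_3\equiv1\pmod {q_1}$. Averaging over admissible $q_1,q_2,q_3$ with the iterated limits $Q_1,Q_2,Q_3\to\infty$ then makes the following quantities close to $1$ for every relevant pretentious $f$:
\[
f(P_2)\overline{f(P_3)}\quad\text{and}\quad f(P_1)\overline{f(\ell P_3)}.
\]
This is an FKM-style concentration estimate, together with the Turán–Kubilius inequality for the contribution of large primes. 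The sector condition with the interval $J_{\ell,\delta}$ handles $t\in\Theta$ uniformly in $\ell$. The $\ell$ in $P_1/(\ell P_3)$ is absorbed because the $q_1$-adic condition and the sector are chosen after $\ell$.

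Consequently $T_{U_{i,2}/U_{i,3}}F_{\mathrm{pret}}\approx F_{\mathrm{pret}}$ and $T_{U_{i,1}/(\ell_iU_{i,3})}F_{\mathrm{pret}}\approx F_{\mathrm{pret}}$ in $L^2$, up to $O(\delta)$ and the approximation error. The average is therefore at least
\[
\bigl(\overline d_\Psi(B)-o(1)\bigr)\int F\cdot F_{\mathrm{pret}}^{2r}\,d\mu - \eta.
\]

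\textbf{Step 3: positivity.} It remains to show $\int F\, F_{\mathrm{pret}}^{2r}\,d\mu\ge\mu(E)^{2r+1}$. Since $F_{\mathrm{pret}}=\E(F\mid\mathcal Z)$ for a factor $\mathcal Z$, and $F_{\mathrm{pret}}$ is $\mathcal Z$-measurable, the integral equals $\int F_{\mathrm{pret}}^{2r+1}\,d\mu$. This is at least $(\int F_{\mathrm{pret}})^{2r+1}=\mu(E)^{2r+1}$ by Jensen, using $0\le F_{\mathrm{pret}}\le1$.

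For the $\ell$-average: the weight $\ch_B(\ell)$ is nonnegative and the main term is essentially independent of $\ell$ after the approximation, so averaging over $\Psi_L$ along a subsequence realizing $\beta$ gives the factor $\beta$.

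\textbf{Main obstacle.} The delicate step is Step 2. I need uniformity in $\ell$ and in the $2r$-fold product, and the order of limits must be respected: $\delta$ and $\Theta$ are fixed first, then $L$, then $Q_1,Q_2,Q_3$, then $N$. In particular, the pretentious approximation errors must not depend on $\ell$. I would first try to prove the single-function concentration estimate for one pretentious $f$ and one triple, and then combine the pieces by Cauchy–Schwarz and the triangle inequality.
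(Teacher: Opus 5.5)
Your proposal has a genuine gap, located where you suspected: you use the average over $\ell\in\Psi_L$ only to produce the factor $\beta$, but the paper's argument depends on it. A van der Corput argument in $\ell_i$, with the weight $\ch_B(\ell)$, shows that $F_{\wm}$ contributes nothing in the $\ell$-twisted slot. That slot can therefore be replaced by $F_{\cpt}\approx\sum_{j\le d}\lambda_jG_j$, a finite sum of eigenfunctions. Without this, Step~1 has no footing:
\begin{itemize}
\item There is no spectral (Bochner) formula for a multiple correlation $\int F\,T_aF\,T_bF\,d\mu$ in terms of $\sigma_F$.
\item Since $U_{i,2}/U_{i,3}$ and $U_{i,1}/(\ell_iU_{i,3})$ depend on the same pair $(n_i',m_i')$, Cauchy--Schwarz cannot isolate the aperiodic slot.
\end{itemize}
Once the $\ell$-slot is an eigenfunction, $T_{U_{i,1}/(\ell_iU_{i,3})}G_j=g_j(U_{i,1}/(\ell_iU_{i,3}))G_j$ becomes a scalar weight. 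The spectral theorem then applies to $\|\cesE\,g_j(\cdots)T_{U_{i,2}/U_{i,3}}F_{\aper}\|_2$.

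Your reduction to ``finitely many Archimedean frequencies $\Theta$'' also fails without this step. Take $T_q=S_{\log q}$ for a mixing $\R$-flow $S$. Then every $F$ lies in the pretentious (even exact) factor, and its spectrum in $t$ is continuous. Moreover $\|T_{U_1/(\ell U_3)}F-F\|_2$ stays bounded below: in the Pythagorean case $U_1/U_3\le1$, so $|\log(U_1/(\ell U_3))|\ge\log\ell$. Continuous $t$ is harmless in the $U_{i,2}/U_{i,3}$ slot, because the sector gives $|\log(P_{i,2}/P_{i,3})|\le\delta$ and dominated convergence applies. The sector condition for $P_{i,1}/(\ell_iP_{i,3})$, however, controls only finitely many $t$. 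Those finitely many $t_j$ are exactly what the eigenfunction approximation supplies, with sectors $J_{\ell,\delta}$ depending on $\ell$.

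Second, even for a single pretentious $f$, the $\mathbf q$-averages do not make $f(U_{i,2})\overline{f(U_{i,3})}$ or $f(U_{i,1})\overline{f(\ell_iU_{i,3})}$ close to $1$; they act as projections. Concentration gives
\begin{itemize}
\item $f(U_{i,2})\approx h_f(q_{i,2})\bigl(Q_i^{D_i}P_{i,2}(n_i',m_i')\bigr)^{it}e^{D_iF_N(f,Q_3)}$, and $\cesE_{q_{i,2}}h_f(q_{i,2})\to0$ unless $f(p)=\chi(p)p^{it}$ for all but finitely many $p$;
\item $f(U_{i,1})\approx f(q_{i,1})q_{i,1}^{-it}(\cdots)$, and $\cesE_{q_{i,1}}f(q_{i,1})q_{i,1}^{-it}\to0$ unless $f(n)=n^{it}$ identically.
\end{itemize}
Nor can $f(\ell_i)$ be absorbed (think of $f=\chi$): the congruence condition fixes $P_1(v,w)/q_1\equiv1\pmod{q_1}$ independently of $\ell$.

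The correct main term is therefore $\int F(F_1F_2)^r\,d\mu$. Here $F_1=\E(F\mid X_{p,\exact})$ and $F_2$ is the projection onto the compact Archimedean factor. Its positivity comes from Chu's inequality, not from Jensen for a single factor.

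Your intermediate bound $\beta\int F\,F_{\pret}^{2r}\,d\mu$ is in fact false. Take $X=\Z/2$ with $T_qx=x+\sum_{p\in\mathcal S}\nu_p(q)$, where $\mathcal S$ is an infinite set of primes with $\sum_{p\in\mathcal S}1/p<\infty$, and let $F=\ch_{\{0\}}$, $r=1$. Put $f=(-1)^{\sum_{p\in\mathcal S}\nu_p}$, which is pretentious but not exact. Then $F_{\pret}=F$, so your bound gives $\beta/2$. But $\int F\,T_aF\,T_bF\,d\mu=\frac18(1+f(a))(1+f(b))$, and $f(a)\approx f(q_2)f(q_3)$, $f(b)\approx f(q_1)f(q_3)f(\ell)$, $f(ab)\approx f(q_1)f(q_2)f(\ell)$ are all killed by the $q_1$- or $q_2$-averages. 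The limit is therefore $\beta/8=\beta\mu(E)^3$.
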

See \cite{Bergelson_multiplicative,Bergelson-Ferre-correspondence} for more details on the Furstenberg correspondence. Before we get to the proof, let’s discuss how this paper fits into the substantial history of related work.

\subsection{Related results}

For homogeneous linear equations, partition regularity is
characterized by Rado's theorem \cite{Rado-original}. There are
also extensions to diagonal equations in powers. Chow, Lindqvist,
and Prendiville \cite{chow-lindqvist-prendiville} prove that the
zero-subsum criterion characterizes partition regularity of
diagonal quadratic equations with at least five variables, and
obtain corresponding results for higher powers with sufficiently
many variables. Chapman \cite{chapman-part-reg} treats systems
of diagonal equations under a nonsingularity hypothesis. Related
density results for linear equations in the squares were obtained
by Browning and Prendiville \cite{browning-prendiville-roth}.

The extra product in our equation also connects it to questions
involving addition and multiplication together. Moreira
\cite{moreira-monochromatic} proves that every finite coloring
contains a monochromatic configuration
\[
\{x,x+y,xy\}.
\]
For general sufficient and necessary conditions for nonlinear
partition regularity, see Di Nasso and Luperi Baglini
\cite{dinasso-luperi}.

Shalom \cite[Theorems~5.4--5.5]{Shalom} proves a multiple
recurrence theorem for products of linear forms and applies it
to simultaneous solutions of quadratic equations. For example,
every finite coloring of $\N$ admits $x,y,x',y',z,z',k\in\N$
with $x\ne y$ and $x'\ne y'$ such that
\[
16x^2+9y^2=z^2
\text{ \ \ and \ \ }(x')^2+(y')^2-x'y'=(z')^2,
\]
and $xx'/k$, $xy'/k$, and $yy'/k$ are distinct integers of the
same color. In fact, one can deduce Shalom's theorem from
Theorem~\ref{intro:general}, although we will leave the details to the interested reader.

There has been further progress on the Pythagorean problem for
colorings coming from multiplicative functions. Frantzikinakis
and Mountakis \cite{pret_pyth} prove recurrence for pretentious
multiplicative systems along generalized Pythagorean triples.
For $ax^2+by^2=cz^2$ with $a,b,c\in\N$, their result applies
when $a=c$, $b=c$, or $a+b=c$. In particular, it gives Pythagorean triples on which
finitely many unit-circle-valued pretentious completely multiplicative functions
are simultaneously close to $1$. Azevedo and Moreira
\cite[Theorem~1.2]{Azevedo-Moreira} remove the pretentiousness
assumption in this last statement: if $f_1,\ldots,f_s$ are
unit-circle-valued completely multiplicative functions and
$\eta>0$, then there are $x,y,z\in\N$ with
\[
x^2+y^2=z^2
\text{ \ \ and \ \ }
\max_{1\leq j\leq s}\max_{u\in\{x,y,z\}}|f_j(u)-1|<\eta.
\]
They also give a recurrence formulation for multiplicative
Kronecker systems \cite[Theorem~1.7]{Azevedo-Moreira}.

For recurrence along ratios of linear forms, Donoso, Le, Moreira,
and Sun \cite{Moreira&friends} study additive averages of
multiplicative correlation sequences. Charamaras, Mountakis,
and Tsinas \cite{cha-mou-tsin} obtain related approximation
results for multiplicative functions and recurrence results
for finitely generated multiplicative actions. Leung and
T\'afula \cite{Leung-Tafula} give a complete characterization
of measurable and topological multiplicative recurrence for
ratios of two affine linear forms. Our polynomial ratios require
the additional quadratic concentration estimates of
Frantzikinakis, Klurman, and Moreira \cite{fra-klu-mor-2}.

\subsection{Statement of AI use}
While the majority of the ideas were due to the authors, a preliminary AI referee report (via ChatGPT-5.5 Pro) after the project had initially finished led to the discovery of an error that was related to the definition of the sets 
$S_{\delta,\ell}$. The same error appeared in \cite{Fra-Klu-Mor} and in related literature. A previous version of this article had slightly different sets playing the role of $S_{\delta,\ell}$. After we resolved the errors of Theorem~\ref{intro:monochromatic}, it gave us suggestions towards fixing the errors of Theorem~\ref{intro:general}. In particular, it suggested several modifications of Definition~\ref{def:good-multiplicative-recurrence} and how they are relevant towards Theorem~\ref{intro:general}. We would also like to acknowledge the large amount of writing and editing suggestions it gave. ChatGPT-6 was also used for editing and for some of the ideas for applications in the appendix.

\subsection{Notation and Preliminaries}\label{intro:notation}

We write $\N=\{1,2,\ldots\}$, $[N]=\{1,\ldots,N\}$, and
$\Q_{>0}$ for the positive rational numbers. 
A multiplicative \Folner{} sequence $\Phi = \{\Phi_H\}_{H \in \N}$ is a sequence of finite subsets of $\N$ such that, for any fixed number $k$,
\[
\frac{|k\Phi_H \Delta \Phi_H|}{|\Phi_H|} \rightarrow 0 \text{ as } H \rightarrow \infty.
\]
The upper density of a set $A \subset \N$ with respect to $\Phi$ is by definition
\begin{equation}\label{upperdensity}
d_{\Phi}(A) = \limsup_{H \rightarrow \infty} \frac{|A \cap \Phi_H|}{|\Phi_H|}.
\end{equation}
We will say that $A$ has positive upper Banach density if $d_\Phi(A) > 0$ for some multiplicative \Folner{} sequence $\Phi$.

The letters $p$
and $\chi$ denote a prime and a Dirichlet character, respectively.
We write $\nu_p(q)$ for the exponent of $p$ in $q$, so that
$p^e\Vert q$ means $\nu_p(q)=e$. A function is one-bounded
if its absolute value is at most one.

For a finite nonempty set $\Omega$, averages are normalized:
\[
\cesE_{s\in\Omega}a(s)=\frac{1}{|\Omega|}\sum_{s\in\Omega}a(s).
\]
Multiple averages are taken independently unless a joint
restriction is written underneath the averaging symbol. In
particular, if $S\subseteq\N^2$, we put
\[
d_N(S)=\frac{|S\cap[N]^2|}{N^2}
\text{ \ \ and \ \ }
\cesE_{(n',m')\in[N]^2\cap S}a(n',m')
=\frac{\cesE_{n',m'\in[N]}\ch_S(n',m')a(n',m')}{d_N(S)}
\]
whenever $d_N(S)>0$. Here $\ch_S$ is the indicator of $S$.
We use $O(\cdot)$ and $\ll$ for bounds up to a constant;
subscripts record permitted dependencies. For example,
$o_{N\to\infty;L}(1)$ tends to zero as $N\to\infty$ for each
fixed $L$.

Our multiplicative actions $(T_q)_{q\in\Q_{>0}}$ preserve a
probability measure $\mu$ and satisfy $T_qT_r=T_{qr}$. We use
the same notation for their action on functions. Capital letters
$F,G$ denote functions on the probability space, and
$\|F\|_2$ means $\|F\|_{L^2(\mu)}$. We write
$\mathbb E(F\mid\mathcal A)$ for conditional expectation onto
a factor $\mathcal A$. Lowercase letters $f,g$ denote
multiplicative functions. In particular,
\[
\M=\{f:\N\to\mathbb S^1:f(mn)=f(m)f(n)\text{ for all }m,n\in\N\},
\]
where $\mathbb S^1=\{z\in\C:|z|=1\}$. Each $f\in\M$
extends to $\Q_{>0}$ by $f(a/b)=f(a)\overline{f(b)}$.
The notation $\sigma_F$ denotes the spectral measure of $F$
on $\M$. An eigenfunction $G_j$ has eigenvalue $g_j\in\M$,
so that $T_qG_j=g_j(q)G_j$.

For one-bounded multiplicative functions $f,g$, we use the
pretentious distance
\[
\D(f,g;u,v)=\left(\sum_{u<p\leq v}
\frac{1-\operatorname{Re}(f(p)\overline{g(p)})}{p}\right)^{1/2}
\text{ \ \ and \ \ }\D(f,g)=\D(f,g;1,\infty).
\]
We say that $f$ is pretentious if
$\D(f,n\mapsto\chi(n)n^{it})<\infty$ for some Dirichlet
character $\chi$ and $t\in\R$. We call $f$ aperiodic if
$\cesE_{n\in[N]}f(an+b)\to0$ for every $a\in\N$ and every
integer $b\geq0$.

For integers $2\leq u<v$, our auxiliary averaging sets are
\[
\Phi_u=\left\{\prod_{p\leq u}p^{\alpha_p}:
\alpha_p\in\Z,\ u<\alpha_p\leq2u\right\}
\text{ \ \ and \ \ }
\Phi_{u,v}=\left\{\prod_{u<p\leq v}p^{\alpha_p}:
\alpha_p\in\Z,\ v<\alpha_p\leq2v\right\}.
\]
If $R$ is a binary quadratic form, then $\Phi_{u,v,R}$ is
defined in the same way as $\Phi_{u,v}$, using only primes
for which $R(1,a)\equiv0\pmod p$ has a solution. Thus, once
$u\geq2$, the set $\Phi_{u,v,x^2+y^2}$ uses precisely the
primes $p\equiv1\pmod4$ in that interval. Empty products are
taken to be $1$. The sequence $\Psi=(\Psi_L)$ denotes the
multiplicative \Folner{} sequence used to average the parameter
$\ell\in B$; in the simultaneous statement, it is a sequence
in $\N^r$.

We let $Q_1<Q_2<Q_3/2$ be the averaging cutoffs, while the lowercase variables range over
\[
q_{i,1}\in\Phi_{Q_1},\qquad q_{i,2}\in\Phi_{Q_1,Q_2},\qquad
q_{i,3}\in
\begin{cases}
\Phi_{Q_2,Q_3,R_i},&P_{i,3}\text{ has a quadratic factor }R_i,\\
\Phi_{Q_2,Q_3},&\text{otherwise}.
\end{cases}
\]
We put $Q_i=\prod_{p\leq Q_3}p^{3D_iQ_3}$ and use the convention
\[
n_i=Q_i n_i'+v_i(\mathbf q_i)
\text{ \ \ and \ \ }
m_i=Q_i m_i'+w_i(\mathbf q_i),
\]
where the residues $v_i,w_i$ are chosen in $[0,Q_i)$. The
primed variables are the variables being averaged in $[N]^2$;
the unprimed variables are their values in the arithmetic
progressions. The modulus $Q_i$ stays fixed while the
$q_{i,j}$ vary. We write $\mathbf q$ for all the auxiliary variables.

Finally, $S_{i,\ell_i,\delta}$ denotes a sector in the primed
variables of the form
\[
S_{i,\ell_i,\delta}
=\{(n_i',m_i')\in\N^2:n_i'/m_i'\in J_{i,\ell_i,\delta}\}.
\]
The intervals $J_{i,\ell_i,\delta}$ are chosen after fixing the
finite set of frequencies $t_1,\ldots,t_d$ from the eigenfunction
approximation. For each fixed $\delta$, they lie in a common
compact subinterval of $(\kappa_i,\infty)$ and have lengths bounded
below by a positive constant independent of $\ell_i$.
On $S_{i,\ell_i,\delta}$ we require
\[
\left|\log\frac{P_{i,2}(n_i',m_i')}{P_{i,3}(n_i',m_i')}\right|\leq\delta
\text{ \ and \ }
\max_{1\leq j\leq d}
\left|\left(\frac{P_{i,1}(n_i',m_i')}
{\ell_iP_{i,3}(n_i',m_i')}\right)^{it_j}-1\right|\leq\delta.
\]
The dependence on the frequencies is suppressed in the notation.
The sector calculation in the appendix gives these intervals
for the Pythagorean triple. For a general triple, the intervals
are supplied by the sector condition in
Definition~\ref{def:good-multiplicative-recurrence}.

\section{Proof of main theorem}

\subsection{Reduction to the Pretentious Case} \ \\ 
Consider the expression
\[
\cesE_{\ell\in\Psi_L}\ch_B(\ell)\cesE_{\mathbf q}
\cesE_{\substack{(n_i',m_i')\in[N]^2\cap S_{i,\ell_i,\delta}\\1\leq i\leq r}}
\int_X F\prod_{i=1}^r\left[
T_{\frac{P_{i,2}(n_i,m_i)}{P_{i,3}(n_i,m_i)}}F\cdot
T_{\frac{P_{i,1}(n_i,m_i)}{\ell_iP_{i,3}(n_i,m_i)}}F
\right]d\mu.
\]\[
H_i:=F\prod_{\substack{1\leq s\leq r\\s\ne i}}
\left[T_{\frac{P_{s,2}(n_s,m_s)}{P_{s,3}(n_s,m_s)}}F\cdot
T_{\frac{P_{s,1}(n_s,m_s)}{\ell_sP_{s,3}(n_s,m_s)}}F\right].
\]

By the Jacobs--de Leeuw--Glicksberg decomposition
\cite[Theorem~2.24]{DKE}, we can write
\[
F=F_{\wm}+F_{\cpt},
\]
where $F_{\wm}$ is (multiplicatively) weak mixing and
$F_{\cpt}$ is (multiplicatively) compact.

By the definition of weak mixing, we have that
\[
\lim_{H\to\infty}\cesE_{h\in\Phi_H}
\bigl|\langle T_hF_{\wm},F_{\wm}\rangle_{L^2(\mu)}\bigr|=0
\]
for every multiplicative \Folner{} sequence $(\Phi_H)_{H\in\N}$ in $(\N,\times)$.
\begin{claim}
    The terms involving $F_\wm$ tend to $0$.
\end{claim}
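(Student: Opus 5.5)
The plan is to expand each of the $2r$ shifted copies of $F=F_{\wm}+F_{\cpt}$ in the displayed average and show that every resulting term containing at least one shifted $F_{\wm}$ contributes $o(1)$ in the iterated limit. I would do the expansion in a fixed order: first $T_{U_{i,1}/(\ell_iU_{i,3})}F$, then $T_{U_{i,2}/U_{i,3}}F$, for $i=1,\dots,r$. This leaves a fixed index $i$ and a fixed slot holding $F_{\wm}$, while all slots with larger indices still hold $F$.

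\textbf{Isolating one variable pair.} The pairs $(n_s',m_s')$ are averaged independently. Hence for the chosen $i$ I freeze $\ell$, $\mathbf q$ and all pairs with $s\ne i$, and absorb everything not depending on $(n_i',m_i')$ into the one-bounded function $H_i$. The term becomes
\[
\cesE_{(n_i',m_i')}\bigl\langle T_{U_{i,2}/U_{i,3}}G_2\cdot T_{U_{i,1}/(\ell_iU_{i,3})}G_1,\ \overline{H_i}\bigr\rangle,
\]
where one of $G_1,G_2$ is $F_{\wm}$ and the other is $F$, $F_{\cpt}$ or $F_{\wm}$. By Cauchy--Schwarz it suffices to show that the $L^2$ norm of $\cesE_{(n_i',m_i')}T_{U_{i,2}/U_{i,3}}G_2\,T_{U_{i,1}/(\ell_iU_{i,3})}G_1$ is $o(1)$, uniformly in the frozen data. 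Here $\ell_i$ enters only as a unimodular twist $f(\ell_i)$.

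\textbf{Reduction to a single function.} In the case where the partner function is compact, I approximate it in $L^2$ by a finite combination of eigenfunctions $\sum_j c_jG_j$ with eigenvalues $g_j\in\M$. Then $T_uG_j=g_j(u)G_j$ turns the partner into the scalar weight $g_j(u)$, and the spectral theorem gives
\[
\Bigl\|\cesE_{(n_i',m_i')} c(n_i',m_i')\,T_{V(n_i,m_i)}F_{\wm}\Bigr\|_2^2=\int_{\M}\Bigl|\cesE_{(n_i',m_i')}c(n_i',m_i')\,f\bigl(V(n_i,m_i)\bigr)\Bigr|^2d\sigma_{F_{\wm}}(f).
\]
Here $V$ is the relevant ratio and $c$ is a product of values of the $g_j$ at ratios of binary forms.

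I then split $\sigma_{F_{\wm}}$ into its part on aperiodic $f$ and its part on pretentious $f$:
\begin{itemize}
\item On aperiodic $f$, I use the multiplicity-one linear factor guaranteed by condition~(2) of Definition~\ref{def:good-multiplicative-recurrence}. Together with uniform concentration and orthogonality estimates for multiplicative functions along products of binary linear forms (and, when $\gamma=1$, the quadratic-form estimates of \cite{fra-klu-mor-2}), this should make the inner average tend to $0$ as $N\to\infty$. The eigenvalue weights $g_j$ are handled by first replacing each $g_j$ by its pretentious model $\chi\, n^{it}$ on the progressions; the sector restriction is harmless since the sector has positive density.
\item On pretentious $f$, the measure has no atoms, because weak mixing excludes eigenvalues. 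Here I use the averaging over $\mathbf q\in\Phi_{Q_1}\times\Phi_{Q_1,Q_2}\times\Phi_{Q_2,Q_3}$. By the congruence condition~(3), $f(U_{i,j})$ factors essentially as $f(q_{i,j})$ times a quantity depending only on the residues mod $Q_i$ and the archimedean part. Averaging $f(q_{i,1})$ over the F\o lner-like sets $\Phi_{Q_1}$ then tends to $0$ for every $f$ outside a countable set. Dominated convergence and non-atomicity then finish this part.
\end{itemize}

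\textbf{Main obstacle.} I expect the hardest case to be when both shifted copies with index $i$ carry $F_{\wm}$ (or $F$ with an $F_{\wm}$ slot), so that there is no compact partner to diagonalize. My first attempt is to precompose with $T_{U_{i,3}/U_{i,2}}$. This produces the single shift $T_{U_{i,1}/(\ell_iU_{i,2})}$ on one factor, at the cost of moving $H_i$ into a shifted position. I would then run a van der Corput/Cauchy--Schwarz step in $(n_i',m_i')$, using the second clause of condition~(2): the multiplicity-one factor in $P_2/\widetilde P_3$ is independent of the factors in $P_1/\widetilde P_3$. That clause should decouple the two ratios, so that the joint spectral average of $f_1(U_{i,1}/\ell_iU_{i,3})\,f_2(U_{i,2}/U_{i,3})$ vanishes unless both $f_1$ and $f_2$ are pretentious. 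The pretentious--pretentious contribution would then be killed by the independent $q_{i,1}$ and $q_{i,2}$ averages as above, and the coprimality $\gcd(q_j,q_k)=1$ should ensure that these averages act on the two functions separately.
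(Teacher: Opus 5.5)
\textbf{There is a genuine gap: you discard the $\ell$-average.} When you freeze $\ell$ and treat $\ell_i$ only as a unimodular twist $f(\ell_i)$, nothing left in the average can defeat the part of $\sigma_{F_{\wm}}$ carried by Archimedean characters $f(n)=n^{it}$.

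\emph{A counterexample.} That family is uncountable, and an atomless measure can live on it. Take a weakly mixing flow $(S_s)_{s\in\R}$ and put $T_q=S_{\log q}$. The resulting $\Q_{>0}$-action is weakly mixing, $F_{\cpt}=\mu(E)$, and every spectral measure is carried by $\{n\mapsto n^{it}:t\in\R\}$. For such $f$ one has $f(U_{i,1})\approx(Q_i^{D_i}P_{i,1}(n_i',m_i'))^{it}$, which is independent of $\mathbf q$. So the $\Phi_{Q_1}$-average gives no cancellation. Your ``for every $f$ outside a countable set'' is really ``outside the uncountable set of Archimedean characters'', and non-atomicity does not give that set measure zero. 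The sector average does not help either, because the sectors are built precisely to make these ratios nearly constant.

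\emph{Freezing $\ell$ is fatal.} In this example, for fixed $\ell$ the term containing $T_{U_{i,1}/(\ell_iU_{i,3})}F_{\wm}$ is essentially a short window average of $\langle S_sF_{\wm},F_{\wm}\rangle$ near a fixed $s$. That quantity need not vanish, so no bound ``uniform in the frozen data'' can hold.

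\emph{Splitting both slots makes the target false.} On the sector, $T_{U_{i,2}/U_{i,3}}$ is within $O(\delta)$ of the identity in this example. Hence for $r=1$ the term with $F_{\cpt}$ in the $\ell$-slot and $F_{\wm}$ in the $U_{i,2}/U_{i,3}$ slot is approximately $\mu(E)\|F_{\wm}\|_2^2\neq0$. Your ``main obstacle'' arises only because you split a slot that should not be split. Separately, your aperiodic case silently assumes the eigenvalues $g_j$ are pretentious.

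\textbf{The missing idea, which is the paper's argument.} Split only the $\ell_i$-twisted slot, and use the multiplicative \Folner{} average over $\ell\in\Psi_L$ as the source of mixing.
\begin{enumerate}
\item Bound every other factor by one and pull the $\ell$-average inside the integral.
\item Apply Cauchy--Schwarz and van der Corput in $\ell$, shifting the $i$-th coordinate by $h\in\Phi_H$.
\item Since $\ell_i$ enters only through $T_{1/\ell_i}$, invariance of $\mu$ turns each correlation into $\langle T_hF_{\wm},F_{\wm}\rangle$. The accompanying weights are bounded by $C_\delta$, because the sector densities $d_N(S_{i,\ell_i,\delta})$ are bounded below uniformly in $\ell_i$.
\item Weak mixing then gives $\cesE_{h\in\Phi_H}|\langle T_hF_{\wm},F_{\wm}\rangle|\to0$.
\end{enumerate}
No spectral analysis, concentration estimates, or $\mathbf q$-averaging is needed at this step. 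The $U_{i,2}/U_{i,3}$ slot keeps $F$ and is handled later through the pretentious/aperiodic decomposition.
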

\begin{proof}
By linearity of the integral and the triangle inequality, we have that
\[
\begin{aligned}
&\left|\cesE_{\ell\in\Psi_L}\ch_B(\ell)\cesE_{\mathbf q}
\cesE_{\substack{(n_s',m_s')\in[N]^2\cap S_{s,\ell_s,\delta}\\1\leq s\leq r}}
\int_X H_i\,
T_{\frac{P_{i,2}(n_i,m_i)}{P_{i,3}(n_i,m_i)}}F\cdot
T_{\frac{P_{i,1}(n_i,m_i)}{\ell_iP_{i,3}(n_i,m_i)}}F_{\wm}\,d\mu
\right|\\
&\leq \sum_{\lambda\in\pi_{-i}\Psi_L}
\frac{|\Psi_L(\lambda)|}{|\Psi_L|}\,
\cesE_{\mathbf q}\cesE_{\substack{(n_s',m_s')\in[N]^2\cap S_{s,\lambda_s,\delta}\\s\ne i}}
\cesE_{n_i',m_i'\in[N]}\\
&\qquad\int_X\left|H_iT_{\frac{P_{i,2}(n_i,m_i)}{P_{i,3}(n_i,m_i)}}F\right|
\left|\cesE_{\ell_i\in\Psi_L(\lambda)}
\frac{\ch_B(\ell)\ch_{S_{i,\ell_i,\delta}}(n_i',m_i')}
{d_N(S_{i,\ell_i,\delta})}
T_{\frac{P_{i,1}(n_i,m_i)}{\ell_iP_{i,3}(n_i,m_i)}}F_{\wm}\right|d\mu.
\end{aligned}
\]

Because $F$ is one-bounded, we have that this is at most
\[
\begin{aligned}
&\sum_{\lambda\in\pi_{-i}\Psi_L}\frac{|\Psi_L(\lambda)|}{|\Psi_L|}
\cesE_{\mathbf q}\cesE_{\substack{(n_s',m_s')\in[N]^2\cap S_{s,\lambda_s,\delta}\\s\ne i}}
\cesE_{n_i',m_i'\in[N]}\\
&\qquad\int_X
\left|\cesE_{\ell_i\in\Psi_L(\lambda)}
\frac{\ch_B(\ell)\ch_{S_{i,\ell_i,\delta}}(n_i',m_i')}
{d_N(S_{i,\ell_i,\delta})}
T_{\frac{P_{i,1}(n_i,m_i)}{\ell_iP_{i,3}(n_i,m_i)}}F_{\wm}\right|d\mu.
\end{aligned}
\]

By Cauchy--Schwarz and the van der Corput inequality
\cite[cf.\ Theorem~2.12 and its proof]{Bergelson-Moreira-vdC}, the square of this term is at most
\[
\cesE_{h\in\Phi_H}\cesE_{n_i',m_i'\in[N]}
\left|\cesE_{\ell\in\Psi_L}
\frac{\ch_B(\ell^{(i,h)})\ch_B(\ell)
\ch_{S_{i,\ell_i h,\delta}}(n_i',m_i')
\ch_{S_{i,\ell_i,\delta}}(n_i',m_i')}
{d_N(S_{i,\ell_i h,\delta})d_N(S_{i,\ell_i,\delta})}
\langle T_hF_{\wm},F_{\wm}\rangle_{L^2(\mu)}\right|+o(1),
\]
Here we used invariance of $\mu$ in the inner product, and the error
tends to zero as first $L$ and then $H$ tend to infinity.

By the triangle inequality, this is at most
\[
C_\delta\cesE_{h\in\Phi_H}
\bigl|\langle T_hF_{\wm},F_{\wm}\rangle_{L^2(\mu)}\bigr|+o(1),
\]
where $C_\delta$ is a constant depending on $\delta$.
By the definition of weak mixing, this tends to zero.
\end{proof}
Therefore, we may focus on
\[
\cesE_{\ell\in\Psi_L}\ch_B(\ell)\cesE_{\mathbf q}
\cesE_{\substack{(n_s',m_s')\in[N]^2\cap S_{s,\ell_s,\delta}\\1\leq s\leq r}}
\int_X H_i\,
T_{\frac{P_{i,2}(n_i,m_i)}{P_{i,3}(n_i,m_i)}}F\cdot
T_{\frac{P_{i,1}(n_i,m_i)}{\ell_iP_{i,3}(n_i,m_i)}}F_{\cpt}\,d\mu.
\]
In the $P_{i,1}$ term, $F$ has been replaced by $F_{\cpt}$.

Let $\e>0$. By the definition of the compact factor, there are eigenfunctions
$G_1,\ldots,G_d$ with respective eigenvalues $g_1,\ldots,g_d$, and
$\lambda_1,\ldots,\lambda_d\in\C$ such that
\[
\left\|F_{\cpt}-\sum_{j=1}^d\lambda_jG_j\right\|_{L^2(\mu)}<\e,
\]
where $G_j$ is an eigenfunction with eigenvalue $g_j$, meaning that
\[
T_hG_j=g_j(h)G_j
\]
for every $h\in\Q_{>0}$, and $g_j$ is a completely multiplicative
function taking values on the unit circle.
We may take the $G_j$ to be bounded by truncating their invariant
absolute values.

By Cauchy--Schwarz and invariance of $\mu$, replacing $F_{\cpt}$ by
$\sum_{j=1}^d\lambda_jG_j$ changes the expression by at most $\e$.
By linearity, we therefore obtain
\[
\sum_{j=1}^d\lambda_j\cesE_{\ell\in\Psi_L}\ch_B(\ell)
\cesE_{\mathbf q}
\cesE_{\substack{(n_s',m_s')\in[N]^2\cap S_{s,\ell_s,\delta}\\1\leq s\leq r}}
\int_X H_i\,
T_{\frac{P_{i,2}(n_i,m_i)}{P_{i,3}(n_i,m_i)}}F\cdot
T_{\frac{P_{i,1}(n_i,m_i)}{\ell_iP_{i,3}(n_i,m_i)}}G_j\,d\mu+O(\e).
\]

By invariance of $\mu$ and the definition of an eigenfunction, the integral equals
\[
g_j\!\left(\frac{P_{i,1}(n_i,m_i)}{\ell_i P_{i,3}(n_i,m_i)}\right)
\int_X H_i\cdot T_{\frac{P_{i,2}(n_i,m_i)}{P_{i,3}(n_i,m_i)}}F\cdot G_j\,d\mu.
\]

We next show that all the shifted functions may be replaced by their projections onto the pretentious factor.
Write
\[
F=F_{\pret}+F_{\aper},
\]
where $F_{\pret}$ is the projection of $F$ onto the pretentious factor
and $F_{\aper}$ is orthogonal to that factor
\cite[Proposition~5.2 and Corollary~5.5]{Fra-Hom_Triples}.

\begin{claim}
\[
\sum_{j=1}^d\lambda_j\cesE_{\ell\in\Psi_L}\ch_B(\ell)
\cesE_{\mathbf q}
\cesE_{\substack{(n_s',m_s')\in[N]^2\cap S_{s,\ell_s,\delta}\\1\leq s\leq r}}
\int_X H_i\,
T_{\frac{P_{i,2}(n_i,m_i)}{P_{i,3}(n_i,m_i)}}F_{\aper}\cdot
T_{\frac{P_{i,1}(n_i,m_i)}{\ell_iP_{i,3}(n_i,m_i)}}G_j\,d\mu
=\oh_{N\to\infty;L}(1).
\]
\end{claim}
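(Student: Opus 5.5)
We will fix everything except the variables attached to the index $i$ and reduce the claim to a vanishing statement for averages of multiplicative functions along binary linear and quadratic forms. Fix $L$, $\ell\in\Psi_L$, $\mathbf q$, $j$, and the pairs $(n_s',m_s')$ for $s\ne i$. The average over these is a finite normalized average for fixed $L$, $Q_1,Q_2,Q_3$. Hence it suffices to show that, for each such choice,
\[
\cesE_{(n_i',m_i')\in[N]^2\cap S_{i,\ell_i,\delta}}
g_j\!\left(\frac{P_{i,1}(n_i,m_i)}{\ell_iP_{i,3}(n_i,m_i)}\right)
\int_X H_i\,G_j\,T_{\frac{P_{i,2}(n_i,m_i)}{P_{i,3}(n_i,m_i)}}F_{\aper}\,d\mu\longrightarrow0
\]
as $N\to\infty$. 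Here we have already pulled out the eigenvalue as in the preceding computation. Note that $H_i$ does not depend on $(n_i',m_i')$.

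\textbf{Step 1 (spectral reduction).} Let $\sigma$ be the complex spectral measure on $\M$ associated with the pair $F_{\aper}$ and $\overline{H_iG_j}$. Then
\[
\int_X H_iG_j\,T_uF_{\aper}\,d\mu=\int_\M f(u)\,d\sigma(f)\quad\text{for all } u\in\Q_{>0}.
\]
Since $F_{\aper}$ is orthogonal to the pretentious factor, $|\sigma|$ gives zero mass to pretentious $f$ \cite[Proposition~5.2 and Corollary~5.5]{Fra-Hom_Triples}. By Fubini and dominated convergence, recalling that $d_N(S_{i,\ell_i,\delta})$ is bounded below uniformly in $N$ because $J_{i,\ell_i,\delta}$ has positive length, it suffices to prove the following for every non-pretentious $f\in\M$:
\[
\cesE_{n',m'\in[N]}\ch_{S_{i,\ell_i,\delta}}(n',m')\,
g_j\!\left(\frac{P_{i,1}(n,m)}{P_{i,3}(n,m)}\right)f\!\left(\frac{P_{i,2}(n,m)}{P_{i,3}(n,m)}\right)\to0,
\]
with $n=Q_in'+v_i$ and $m=Q_im'+w_i$.

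\textbf{Step 2 (unfolding the polynomials).} Expand using the factorization in Definition~\ref{def:good-multiplicative-recurrence}. The summand becomes a product of the following factors:
\begin{itemize}
\item terms $h_\nu(L_\nu(Q_in'+v_i,Q_im'+w_i))^{\pm1}$ over the distinct linear forms $L_\nu$, where each $h_\nu$ is a product of powers of $f$ and $g_j$;
\item if $\gamma=1$, a factor $\overline{(fg_j)}(R(n,m))$.
\end{itemize}
Condition (2) provides a linear factor $L^*$ that occurs with multiplicity one in the reduced ratio $P_{i,2}/\widetilde P_{i,3}$ and is independent of every linear factor remaining in $P_{i,1}/\widetilde P_{i,3}$. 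Consequently $L^*$ carries exactly $f^{\pm1}$, with no $g_j$ attached, and $f^{\pm1}$ is non-pretentious. The sector indicator is the indicator of a cone; we approximate it in $L^1$ by continuous (e.g.\ Fejér-type) weights, or equivalently replace $[N]^2$ by the region $\{n'/m'\in J\}$.

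\textbf{Step 3 (vanishing).} With affine linear forms in $(n',m')$, pairwise independent after merging proportional ones, we invoke the Frantzikinakis--Host theorem on multilinear averages of multiplicative functions along linear forms \cite{Fra-Hom_Triples}. It states that such an average vanishes as soon as one of the functions attached to an independent form is aperiodic. For a non-pretentious $f$ this holds along every progression, which absorbs the shift by $v_i,w_i$ and the modulus $Q_i$. When $\gamma=1$, the extra factor evaluated at $R(n,m)$ is not covered by the linear theory. For this we plan to use the quadratic concentration estimates of Frantzikinakis, Klurman and Moreira \cite{fra-klu-mor-2}. We would apply a Tur\'an--Kubilius/concentration argument: along the progression and the sector, $(fg_j)(R(n,m))$ is close to a function of $(n,m)$ depending only on the pretentious structure of $fg_j$ (a character times an Archimedean factor). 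Alternatively it is small in an averaged sense, allowing Cauchy--Schwarz to remove it while keeping the $f(L^*)$ factor.

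I expect this quadratic step to be the main obstacle. When $fg_j$ is pretentious, the concentration must be uniform enough on the sector that the remaining linear average still sees $f(L^*)$ as aperiodic. When $fg_j$ is itself non-pretentious, we would instead bound the full average by a Gowers-type or Cauchy--Schwarz estimate that isolates $f(L^*)$ after a van der Corput step in the direction transverse to $L^*$. In that direction $L^*$ varies while the other forms move along independent directions.
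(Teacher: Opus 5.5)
Your Steps~1 and~2 are sound. In particular, $L^*$ carries exactly $f^{\pm1}$ because its $g_j$-exponents cancel in the reduced ratio $P_{i,1}/\widetilde P_{i,3}$. For $\gamma=0$ the linear-forms vanishing theorem then finishes the job; the paper takes it from \cite[Theorem~9.7]{Fra-Host-structure-multiplicative}.

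The genuine gap is the case $\gamma=1$. This is the case needed for the Pythagorean applications, and you leave it open. Neither of your plans closes it:
\begin{itemize}
\item A van der Corput step in any direction turns $\overline{(fg_j)}(R(n,m))$ into $fg_j$ evaluated at two different quadratic forms, which is no simpler. The argument also never uses the multiplicativity of $fg_j$ on the quadratic form.
\item Your concentration route for pretentious $fg_j$ only gives an error of size $\D(fg_j,\chi n^{it};Q_3,\infty)+Q_3^{-1/2}$ at the fixed modulus $Q_i$. That is not $\oh_{N\to\infty}(1)$ with $Q_1,Q_2,Q_3$ fixed, which is what the claim asserts.
\end{itemize}
The missing ingredient is \cite[Theorem~3.1]{fra-klu-mor-2}, which is what the paper invokes. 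It treats an irreducible binary quadratic form together with pairwise independent linear forms along affine progressions, with \emph{arbitrary} one-bounded multiplicative functions. It gives vanishing as soon as one function attached to a linear form is aperiodic. The function on $R$ is unrestricted, so you need no pretentious/non-pretentious dichotomy for $fg_j$; your Step~2 verifies exactly the required hypothesis. For $\gamma=0$ the paper applies the same theorem with the dummy form $n^2+m^2$ carrying the constant function $1$.

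There is also a smaller gap in your opening reduction. For $r\geq2$ the pairs $(n_s',m_s')$ with $s\ne i$ range over $[N]^2\cap S_{s,\ell_s,\delta}$, which grows with $N$, so that outer average is not finite. Convergence to $0$ for each fixed $H_i$ does not then imply that the average tends to $0$, especially since you run dominated convergence against a measure $\sigma$ that depends on $H_i$. You need a bound uniform in $H_i$.

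The paper obtains it by Cauchy--Schwarz. It bounds the integral by the $L^2(\mu)$ norm of the $(n_i',m_i')$-average of $g_j(\cdots)\,T_{P_{i,2}(n_i,m_i)/P_{i,3}(n_i,m_i)}F_{\aper}$. This norm depends only on $\ell_i$ and $\mathbf q_i$, not on the other pairs, and the paper then applies the spectral theorem with $\sigma_{F_{\aper}}$ itself. In your notation, let $a_N(f)$ be the inner average of multiplicative functions. Then $\bigl|\int_\M a_N\,d\sigma\bigr|\leq\|H_iG_j\|_2\bigl(\int_\M|a_N|^2\,d\sigma_{F_{\aper}}\bigr)^{1/2}$, and dominated convergence should be applied to the right-hand side, which no longer involves $H_i$.
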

\begin{proof}
Using the preceding identity and Cauchy--Schwarz, it suffices to show that,
for each fixed $i$, $j$ and $\ell_i$,
\[
\left\|
\cesE_{(n_i',m_i')\in[N]^2\cap S_{i,\ell_i,\delta}}
g_j\!\left(\frac{P_{i,1}(n_i,m_i)}{\ell_i P_{i,3}(n_i,m_i)}\right)
T_{\frac{P_{i,2}(n_i,m_i)}{P_{i,3}(n_i,m_i)}}F_{\aper}
\right\|_{L^2(\mu)}
=\oh_{N\to\infty;\ell_i}(1).
\]

Recall that the spectral theorem
\cite[Section~4.4]{Fra-Hom_Triples} says that, for every $F\in L^2(\mu)$,
there is a finite positive Borel measure $\sigma_F$ on $\M$ such that
\[
\left\|\sum_{a=1}^k c_aT_{r_a}F\right\|_{L^2(\mu)}^2
=\int_{\M}\left|\sum_{a=1}^k c_a f(r_a)\right|^2\,d\sigma_F(f)
\]
for every $k\in\N$, $c_1,\ldots,c_k\in\C$, and
$r_1,\ldots,r_k\in\Q_{>0}$.
Here $\M$ is the space of completely multiplicative functions on
$\Q_{>0}$ taking values on the unit circle.

By the spectral theorem, the square of this norm equals
\[
\int_{\M}
\left|
\cesE_{(n_i',m_i')\in[N]^2\cap S_{i,\ell_i,\delta}}
g_j\!\left(\frac{P_{i,1}(n_i,m_i)}{\ell_i P_{i,3}(n_i,m_i)}\right)
f\!\left(\frac{P_{i,2}(n_i,m_i)}{P_{i,3}(n_i,m_i)}\right)
\right|^2
\,d\sigma_{F_{\aper}}(f),
\]
where $\sigma_{F_{\aper}}$ is the spectral measure of $F_{\aper}$.
By the definition of the aperiodic component, $\sigma_{F_{\aper}}$ is
supported on the aperiodic functions, so we may assume $f$ is aperiodic.

By complete multiplicativity, this equals
\[
\int_{\M}\left|
\overline{g_j(\ell_i)}
\cesE_{(n_i',m_i')\in[N]^2\cap S_{i,\ell_i,\delta}}
g_j(P_{i,1}(n_i,m_i))f(P_{i,2}(n_i,m_i))
\overline{(fg_j)(P_{i,3}(n_i,m_i))}
\right|^2\,d\sigma_{F_{\aper}}(f).
\]

Frantzikinakis--Host \cite[Theorem~9.7]{Fra-Host-structure-multiplicative}
showed that, for $Q(m,n)=m^2+n^2$, nonzero pairwise linearly independent
integer linear forms $L_1,\ldots,L_s$, and one-bounded completely
multiplicative functions $f_0,\ldots,f_s$,
\[
\cesE_{\substack{m,n\in[N]\\m<n}}f_0(Q(m,n))
\prod_{k=1}^s f_k(L_k(m,n))=\oh_{N\to\infty}(1),
\]
as long as at least one of $f_1,\ldots,f_s$ is aperiodic. For arbitrary irreducible binary quadratic forms $R_i$ and the affine progressions used here, we use Frantzikinakis--Klurman--Moreira \cite[Theorem~3.1]{fra-klu-mor-2}.
Write $\widetilde P_{i,3}$ for the product of the linear factors of
$P_{i,3}$. After combining proportional linear factors, the
multiplicity-one factor in the reduced ratio
$P_{i,2}/\widetilde P_{i,3}$ that is independent of the factors
remaining in $P_{i,1}/\widetilde P_{i,3}$ carries $f$ or $\overline f$.
When $P_{i,3}$ has no quadratic factor, we may use
$n^2+m^2$ with the constant function $1$ as the quadratic term
in the cited theorem. Since $f$ is aperiodic, the integrand tends to zero
\cite[Theorem~3.1]{fra-klu-mor-2}.
By the dominated convergence theorem, the integral also tends to zero.

Similarly, we can show that
\[
\sum_{i=1}^r\left|\sum_{j=1}^d\lambda_j
\cesE_{\ell\in\Psi_L}\ch_B(\ell)\cesE_{\mathbf q}
\cesE_{\substack{(n_s',m_s')\in[N]^2\cap S_{s,\ell_s,\delta}\\1\leq s\leq r}}
\int_X H_i\cdot T_{\frac{P_{i,2}(n_i,m_i)}{P_{i,3}(n_i,m_i)}}F_{\aper}
\cdot T_{\frac{P_{i,1}(n_i,m_i)}{\ell_iP_{i,3}(n_i,m_i)}}G_j\,d\mu
\right|=\oh_{N\to\infty;L}(1).
\]

Finally, if $g_j$ is aperiodic, then applying invariance, Cauchy--Schwarz,
and the spectral theorem again, for each fixed $i$ and $\ell_i$,
\begin{align*}
&\left|\cesE_{(n_i',m_i')\in[N]^2\cap S_{i,\ell_i,\delta}}
\int_X H_i\cdot T_{\frac{P_{i,2}(n_i,m_i)}{P_{i,3}(n_i,m_i)}}F_{\pret}
\cdot T_{\frac{P_{i,1}(n_i,m_i)}{\ell_i P_{i,3}(n_i,m_i)}}G_j\,d\mu\right|^2\\
&\leq\|G_j\|_{L^2(\mu)}^2
\left\|\cesE_{(n_i',m_i')\in[N]^2\cap S_{i,\ell_i,\delta}}
g_j\!\left(\frac{P_{i,1}(n_i,m_i)}{\ell_i P_{i,3}(n_i,m_i)}\right)
T_{\frac{P_{i,2}(n_i,m_i)}{P_{i,3}(n_i,m_i)}}F_{\pret}\right\|_{L^2(\mu)}^2\\
&=\|G_j\|_{L^2(\mu)}^2
\int_{\M}\left|\cesE_{(n_i',m_i')\in[N]^2\cap S_{i,\ell_i,\delta}}
g_j(P_{i,1}(n_i,m_i))f(P_{i,2}(n_i,m_i))
\overline{(fg_j)(P_{i,3}(n_i,m_i))}\right|^2\,d\sigma_{F_{\pret}}(f).
\end{align*}
After combining proportional linear factors, the multiplicity-one
factor in the reduced ratio $P_{i,1}/\widetilde P_{i,3}$ carries
$g_j$ or $\overline{g_j}$ multiplied by a pretentious function,
which is still aperiodic.
As above, in the all-linear case we can use a constant quadratic term.
Since $g_j$ is aperiodic, Frantzikinakis--Host \cite[Theorem~9.7]{Fra-Host-structure-multiplicative}, or Frantzikinakis--Klurman--Moreira \cite[Theorem~3.1]{fra-klu-mor-2} for a general quadratic form, again implies that the inner
average tends to zero, which implies that the integral tends to zero.
Consequently,
\[
\cesE_{\ell\in\Psi_L}\ch_B(\ell)\cesE_{\mathbf q}
\cesE_{\substack{(n_s',m_s')\in[N]^2\cap S_{s,\ell_s,\delta}\\1\leq s\leq r}}
\int_X H_i\cdot T_{\frac{P_{i,2}(n_i,m_i)}{P_{i,3}(n_i,m_i)}}F_{\pret}
\cdot T_{\frac{P_{i,1}(n_i,m_i)}{\ell_iP_{i,3}(n_i,m_i)}}G_j\,d\mu
=\oh_{N\to\infty;L}(1).
\]
Thus we may restrict the sum to those $j$ for which $g_j$ is pretentious.
\end{proof}

\subsection{Independence of the Averages in $q_i$}
\ \\ Therefore, we are reduced to showing that
\[
\begin{aligned}
&\limsup_{L\to\infty}\limsup_{N\to\infty}
\cesE_{\ell\in\Psi_L}\ch_B(\ell)\cesE_{\mathbf q}\\
&\qquad
\cesE_{\substack{(n_i',m_i')\in[N]^2\cap S_{i,\ell_i,\delta}\\1\leq i\leq r}}
\int_X F\prod_{i=1}^r\left[
T_{\frac{P_{i,2}(n_i,m_i)}{P_{i,3}(n_i,m_i)}}F_{\pret}\cdot
T_{\frac{P_{i,1}(n_i,m_i)}{\ell_iP_{i,3}(n_i,m_i)}}
\E(F\mid X_{\cpt}\cap X_{\pret})
\right]d\mu\ge c>0.
\end{aligned}
\]
where $c$ is independent of $\e$.

By the linear concentration estimate
\cite[Lemma~2.5]{Klu-Man-Po-Ter}, in the formulation of
\cite[Proposition~2.1]{pret_pyth}, if $q,a,y\in\N$ satisfy
$(a,q)=1$, the conductor of $\chi$ divides $q$, and the prime divisors
of $q$ are precisely the primes at most $y$, then
\[
\limsup_{N\to\infty}\cesE_{n\in[N]}
\left|f(qn+a)-\chi(a)(qn)^{it}\exp\bigl(F_N(f,y)\bigr)\right|
\ll \D\bigl(f,n\mapsto\chi(n)n^{it};y,\infty\bigr)+y^{-1/2},
\]
where
\[
F_N(f,y):=\sum_{y<p\le N}\frac{h_f(p)-1}{p}
\;\;\text{ and }\;\;
h_f(n):=f(n)\overline{\chi(n)}n^{-it}.
\]

Remember that $n_i$ and $m_i$ are defined by
\[
n_i=Q_i n_i'+v_i(q_{i,1},q_{i,2},q_{i,3})\text{ \ and \ }m_i=Q_i m_i'+w_i(q_{i,1},q_{i,2},q_{i,3}),
\]
so $n_i$ and $m_i$ implicitly depend on $q_{i,1},q_{i,2},q_{i,3}$ through $v_i$ and $w_i$.

We will use the concentration estimates to show that the $T_{P_{i,2}(n_i,m_i)}F_{\pret}$ term essentially only depends on $q_{i,2}$, and the $T_{P_{i,1}(n_i,m_i)/\ell_i}G_j$ term essentially only depends on $q_{i,1}$ \cite[Lemma~4.4]{pret_pyth}.

Let's start with the first term. Fix $q_{i,2}$.
For almost every $f$ in the support of $\sigma_{F_{\pret}}$, choose
a Dirichlet character $\chi$ and $t\in\R$ such that
$\D(f,n\mapsto\chi(n)n^{it})<\infty$, which we can do by the
definition of the pretentious factor.
By the choice of $v_i$, we can write
\[
P_{i,2}(n_i,m_i)=\prod_{k=1}^{D_i}L_{i,2,k}(n_i,m_i)
=\prod_{k=1}^{D_i}\bigl(Q_i L_{i,2,k}(n_i',m_i')+L_{i,2,k}(v_i,w_i)\bigr).
\]
For each linear factor, let
\[
c_{i,k}=\gcd\bigl(Q_i,L_{i,2,k}(v_i,w_i)\bigr),
\]
and apply the concentration estimate with
\[
(q,a,y)=\left(\frac{Q_i}{c_{i,k}},\frac{L_{i,2,k}(v_i,w_i)}{c_{i,k}},Q_3\right),
\]
and variable $L_{i,2,k}(n_i',m_i')$. The residue construction gives
$\gcd(P_{i,2}(v_i,w_i),Q_i)=q_{i,2}$, so the reduced residues are
coprime to their moduli. These moduli have precisely the primes at
most $Q_3$ as prime divisors, and, for $Q_1$ sufficiently large,
they are divisible by the conductor of $\chi$.
By the linear concentration estimate, we have
\[
f\!\left(\frac{L_{i,2,k}(n_i,m_i)}{c_{i,k}}\right)
\approx\chi\!\left(\frac{L_{i,2,k}(v_i,w_i)}{c_{i,k}}\right)
\left(\frac{Q_i L_{i,2,k}(n_i',m_i')}{c_{i,k}}\right)^{it}e^{F_N(f,Q_3)}.
\]
Here $\approx$ means that, for every fixed error tolerance, all these approximations hold within that tolerance for a proportion of pairs
$(n_i',m_i')\in[N]^2\cap S_{i,\ell_i,\delta}$ tending to one as first $N$
and then $Q_3$ tend to infinity, with $f,\delta,Q_1,Q_2$ fixed.
By Markov's inequality, the proportion of exceptional pairs for each
approximation is bounded by its mean error divided by the tolerance.
These mean errors tend to zero because the pretentious distance is
finite, and the union of the finitely many exceptional sets also has proportion
tending to zero.

Since the period of $\chi$ divides $q_{i,1}$, the congruence $P_{i,2}(v_i,w_i)\equiv1\pmod{q_{i,1}}$ gives
\[
\chi\!\left(\frac{P_{i,2}(v_i,w_i)}{q_{i,2}}\right)=\overline{\chi(q_{i,2})}.
\]
By complete multiplicativity and the triangle inequality, multiplying
these approximations gives
\[
f(P_{i,2}(n_i,m_i))\approx h_f(q_{i,2})
\bigl(Q_i^{D_i}P_{i,2}(n_i',m_i')\bigr)^{it}e^{D_iF_N(f,Q_3)},
\]
where the expression on the right does not depend on $q_{i,1}$ or $q_{i,3}$.

Similarly, the second term, $T_{P_{i,1}(n_i,m_i)/\ell_i}G_j$, essentially only depends on $q_{i,1}$. After all, setting
\[
c_{i,k}=\gcd\bigl(Q_i,L_{i,1,k}(v_i,w_i)\bigr),
\]
with variable $L_{i,1,k}(n_i',m_i')$ gives
\[
f\!\left(\frac{L_{i,1,k}(n_i,m_i)}{c_{i,k}}\right)
\approx\chi\!\left(\frac{L_{i,1,k}(v_i,w_i)}{c_{i,k}}\right)
\left(\frac{Q_i L_{i,1,k}(n_i',m_i')}{c_{i,k}}\right)^{it}e^{F_N(f,Q_3)}.
\]
where we have used $P_{i,1}(v_i,w_i)/q_{i,1}\equiv1\pmod{q_{i,1}}$ and taken $Q_1$
sufficiently large that the period of $\chi$ divides $q_{i,1}$.
Again, the union of the finitely many exceptional sets has proportion tending
to zero. By complete multiplicativity and the triangle inequality,
\[
f(P_{i,1}(n_i,m_i))\approx f(q_{i,1})q_{i,1}^{-it}
\bigl(Q_i^{D_i}P_{i,1}(n_i',m_i')\bigr)^{it}e^{D_iF_N(f,Q_3)},
\]
which does not depend on $q_{i,2}$ or $q_{i,3}$. Taking $f=g_j$ and using
\[
T_{P_{i,1}(n_i,m_i)/\ell_i}G_j=\overline{g_j(\ell_i)}g_j(P_{i,1}(n_i,m_i))G_j
\]
gives the corresponding approximation for the second term.

For the third term, when $P_{i,3}$ has a quadratic factor $R_i$, we need the following quadratic concentration
estimate \cite[Propositions~4.8--4.9]{fra-klu-mor-2}, in the
formulation of \cite[Proposition~2.2]{pret_pyth}. Let $f,\chi,t$
be as above, and put $c=\gcd(q,R_i(a,b))$.
For $y$ sufficiently large, suppose that all prime divisors of $q$
are at most $y$, and that $q/c$ is divisible by the conductor of
$\chi$ and by every prime at most $y$. Then
\[
\begin{aligned}
&\limsup_{N\to\infty}\cesE_{u,w\in[N]} 1_{R_i(qu+a,qw+b)>0}
\left|f\!\left(\frac{R_i(qu+a,qw+b)}c\right)
-\chi\!\left(\frac{R_i(a,b)}c\right)
\left(\frac{q^2R_i(u,w)}c\right)^{it}e^{G_{R_i,N}(f,y)}\right|\\
&\hspace{8em}\ll\D\bigl(f,n\mapsto\chi(n)n^{it};y,\infty\bigr)+y^{-1/2}.
\end{aligned}
\]
where
\[
G_{R_i,N}(f,y):=\sum_{y<p\leq N}\frac{\omega_{R_i}(p)(h_f(p)-1)}p,
\]

Fix $q_{i,3}$. Setting
\[
(q,a,b,y)=(Q_i,v_i,w_i,Q_3)\text{ \ and \ }c=\gcd(Q_i,R_i(v_i,w_i)).
\]
gives
\[
f\!\left(\frac{R_i(n_i,m_i)}c\right)
\approx\chi\!\left(\frac{R_i(v_i,w_i)}c\right)
\left(\frac{Q_i^2R_i(n_i',m_i')}c\right)^{it}e^{G_{R_i,N}(f,Q_3)}.
\]
Since $P_{i,3}(v_i,w_i)\equiv1\pmod{q_{i,1}}$ and the period of $\chi$ divides $q_{i,1}$, the product of the character factors is $\overline{\chi(q_{i,3})}$. By complete
multiplicativity,
\[
f(P_{i,3}(n_i,m_i))\approx h_f(q_{i,3})
\bigl(Q_i^{D_i}P_{i,3}(n_i',m_i')\bigr)^{it}
 e^{s_iF_N(f,Q_3)+\gamma_i G_{R_i,N}(f,Q_3)},\text{ where } s_i+2\gamma_i=D_i,
\]
where the right-hand side does not depend on $q_{i,1}$ or $q_{i,2}$.

Implicitly going forward the we can write expressions like 
\[
T_{\frac{P_{i,2}(n_i,m_i)}{P_{i,3}(n_i,m_i)}}F_{\pret}
\]
and understand that the values of $n_i$ and $m_i$ which usually might depend on $q_{i,1}$ and $q_{i,3}$ in fact do not depend on those choices. Instead, we can simply have $n_i$ and $m_i$ in this expression depend on a sufficiently general but fixed choice of those two parameters.

Therefore,
\[
\begin{aligned}
&\cesE_{q_{i,1}\in\Phi_{Q_1}}\cesE_{q_{i,2}\in\Phi_{Q_1,Q_2}}
\cesE_{q_{i,3}}
\cesE_{(n_i',m_i')\in[N]^2\cap S_{i,\ell_i,\delta}}
T_{\frac{P_{i,2}(n_i,m_i)}{P_{i,3}(n_i,m_i)}}F_{\pret}\cdot
T_{\frac{P_{i,1}(n_i,m_i)}{\ell_i P_{i,3}(n_i,m_i)}}G_j\\
&=\cesE_{(n_i',m_i')\in[N]^2\cap S_{i,\ell_i,\delta}}
\cesE_{q_{i,3}}T_{1/P_{i,3}(n_i,m_i)}\left(
\left[\cesE_{q_{i,2}\in\Phi_{Q_1,Q_2}}T_{P_{i,2}(n_i,m_i)}F_{\pret}\right]
\left[\cesE_{q_{i,1}\in\Phi_{Q_1}}T_{P_{i,1}(n_i,m_i)/\ell_i}G_j\right]
\right)+o_{L^2}(1).
\end{aligned}
\]
where we interpret the first $n_i,m_i$ as depending on $q_{i,2}$, for an
arbitrary fixed choice of $q_{i,1}$ and $q_{i,3}$, the second $n_i,m_i$ as depending
on $q_{i,1}$, for an arbitrary fixed choice of $q_{i,2}$ and $q_{i,3}$, and the
third $n_i,m_i$ as depending on $q_{i,3}$, for an arbitrary fixed choice of
$q_{i,1}$ and $q_{i,2}$.

\subsection{Reduction to the Exact Factor}
\ \\
For the first term, we write $F_{\pret}=F_{\ext}+F_{\apx}$, where
$\sigma_{F_{\ext}}$ is supported on multiplicative functions exactly agreeing
with $\chi(p)p^{it}$ for all but finitely many primes, for some Dirichlet
character $\chi$ and some $t\in\R$:
\begin{equation}\label{eq:exact-agreement}
f(p)=\chi(p)p^{it}
\text{ for all but finitely many primes }p.
\end{equation}
The measure $\sigma_{F_{\apx}}$ is supported on pretentious multiplicative
functions for which no such exact agreement holds.
More precisely, we define \cite[Lemmas~2.3 and~2.5]{pret_pyth}
\[
\begin{gathered}
F_{\ext}:=\mathbb{E}(F_{\pret}\mid X_{p,\exact}),\\
F_{\apx}:=F_{\pret}-F_{\ext}.
\end{gathered}
\]
\begin{claim} 
The contribution from terms involving $F_{\apx}$ is negligible.
\end{claim}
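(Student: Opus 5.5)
We will show that the term in which $T_{P_{i,2}(n_i,m_i)}F_{\pret}$ is replaced by $T_{P_{i,2}(n_i,m_i)}F_{\apx}$ contributes $o(1)$ once $Q_2\to\infty$ after $Q_1$ (and before $Q_3,N$). The mechanism is the averaging over $q_{i,2}\in\Phi_{Q_1,Q_2}$. By the concentration estimate above, for $\sigma_{F_{\apx}}$-almost every $f$ we have
\[
f(P_{i,2}(n_i,m_i))\approx h_f(q_{i,2})\bigl(Q_i^{D_i}P_{i,2}(n_i',m_i')\bigr)^{it}e^{D_iF_N(f,Q_3)}.
\]
The only dependence on $q_{i,2}$ is therefore through the factor $h_f(q_{i,2})$. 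So we first use Cauchy--Schwarz, as in the aperiodic claim, to bound the relevant piece by
\[
\bigl\|\cesE_{q_{i,2}\in\Phi_{Q_1,Q_2}}T_{P_{i,2}(n_i,m_i)}F_{\apx}\bigr\|_{L^2(\mu)},
\]
uniformly in the other variables. This uses that the $G_j$ factor and the $T_{1/P_{i,3}}$ shift do not depend on $q_{i,2}$, and that $H_i$ and the remaining factors are one-bounded. By the spectral theorem and the approximation, the square of this norm is
\[
\int_{\M}\Bigl|\cesE_{q_{i,2}\in\Phi_{Q_1,Q_2}}h_f(q_{i,2})\Bigr|^2\,d\sigma_{F_{\apx}}(f)+o(1),
\]
where the $o(1)$ holds as $N\to\infty$ and then $Q_3\to\infty$, by dominated convergence applied to the exceptional sets.

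Next we exploit the product structure of $\Phi_{Q_1,Q_2}$. Since $h_f$ is completely multiplicative, the inner average factors over primes:
\[
\cesE_{q\in\Phi_{Q_1,Q_2}}h_f(q)=\prod_{Q_1<p\le Q_2}\ \cesE_{Q_2<\alpha\le 2Q_2}h_f(p)^{\alpha}.
\]
Each factor has modulus at most $1$. A factor equals $1$ exactly when $h_f(p)=1$. If $h_f(p)\neq1$, the factor is bounded by $\min\bigl(1,\,2/(Q_2|h_f(p)-1|)\bigr)$. Hence for fixed $f$ the product tends to $0$ as $Q_2\to\infty$ whenever there exists some prime $p>Q_1$ with $h_f(p)\neq1$. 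The defining property of $\sigma_{F_{\apx}}$ supplies such primes. The support consists of pretentious $f$ that do not exactly agree with $\chi(p)p^{it}$ at all but finitely many primes, where $(\chi,t)$ is the pair chosen so that $\D(f,\chi n^{it})<\infty$. Such $f$ have $h_f(p)\ne1$ for infinitely many $p$, and in particular for some $p>Q_1$, whatever $Q_1$ is. Dominated convergence in $f$, with integrand bounded by $1$ and $\sigma_{F_{\apx}}$ finite, then gives that the integral tends to $0$ as $Q_2\to\infty$, for each fixed $Q_1$.

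The main obstacle is that $(\chi,t)$, and hence $h_f$, must be chosen measurably and consistently. The same pair has to appear both in the concentration approximation and in the exact/approximate dichotomy. This is what \cite[Lemmas~2.3 and~2.5]{pret_pyth} provide: $t$ is uniquely determined by $f$, and $\chi$ is determined up to characters of the same primitive type, which does not affect whether $h_f(p)=1$ for almost all $p$. A second concern is the order of limits. The concentration step needs $N\to\infty$ and then $Q_3\to\infty$ with $Q_2$ fixed, whereas the decay above needs $Q_2\to\infty$. In the $\varlimsup$ order of Theorem~\ref{intro:dynamical}, $Q_3$ and $N$ are innermost. We therefore first bound the inner $\varlimsup_{Q_3}\varlimsup_N$ by the spectral integral above, whose error vanishes. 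We then take $\varlimsup_{Q_2}$, which kills the $F_{\apx}$ contribution for each $Q_1$ and $L$. Summing over $i$ and over the finitely many $j$ completes the claim.
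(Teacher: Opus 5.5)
Your proposal is correct and follows essentially the same route as the paper. You use Cauchy--Schwarz to isolate $\cesE_{q_{i,2}\in\Phi_{Q_1,Q_2}}T_{P_{i,2}(n_i,m_i)}F_{\apx}$, then the spectral theorem and linear concentration to bound everything by $\bigl|\cesE_{q_{i,2}}h_f(q_{i,2})\bigr|^2$ (an upper bound, since $|e^{D_iF_N(f,Q_3)}|\le1$, rather than an equality), and finally a prime $p>Q_1$ with $h_f(p)\ne1$; your factorization of the average over the product set $\Phi_{Q_1,Q_2}$ into geometric averages is just an explicit version of the paper's shift-by-$p$ argument.

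One correction to your limit bookkeeping: the reduction to $h_f(q_{i,2})$ uses that the period of $\chi$ divides $q_{i,1}$, which requires $Q_1$ large depending on $f$. So the $F_{\apx}$ term vanishes only after also taking $\varlimsup_{Q_1}$ (with dominated convergence in $f$), not for each fixed $Q_1$.
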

\begin{proof}
Fix $Q_1<Q_2<Q_3/2$ and put
\[
Q_i:=\prod_{p\leq Q_3}p^{3D_iQ_3}.
\]
Fix $q_{i,1}$ and $q_{i,3}$ in their respective averaging sets. For each
$q_{i,2}\in\Phi_{Q_1,Q_2}$, write $v_i=v_i(q_{i,1},q_{i,2},q_{i,3})$ and $w_i=w_i(q_{i,1},q_{i,2},q_{i,3})$ and let
\[
\begin{gathered}
n_i=Q_i n_i'+v_i,\\m_i=Q_i m_i'+w_i.
\end{gathered}
\]
As $q_{i,2}$ varies, $Q_i$ stays fixed and $v_i,w_i$ may change.

We will prove that, for each fixed $\delta$ and $\ell_i$,
\[
\limsup_{Q_1\to\infty}\limsup_{Q_2\to\infty}
\limsup_{Q_3\to\infty}\limsup_{N\to\infty}
\cesE_{(n_i',m_i')\in[N]^2\cap S_{i,\ell_i,\delta}}
\left\|
\cesE_{q_{i,2}\in\Phi_{Q_1,Q_2}}
T_{P_{i,2}(n_i,m_i)}F_{\apx}
\right\|_{L^2(\mu)}^2=0.
\]

By the spectral theorem,
\begin{align*}
&\cesE_{(n_i',m_i')\in[N]^2\cap S_{i,\ell_i,\delta}}
\left\|
\cesE_{q_{i,2}\in\Phi_{Q_1,Q_2}}
T_{P_{i,2}(n_i,m_i)}F_{\apx}
\right\|_{L^2(\mu)}^2\\
&=\int_{\M}
\cesE_{(n_i',m_i')\in[N]^2\cap S_{i,\ell_i,\delta}}
\left|
\cesE_{q_{i,2}\in\Phi_{Q_1,Q_2}}
f\big(P_{i,2}(n_i,m_i)\big)
\right|^2\,d\sigma_{F_{\apx}}(f).
\end{align*}
Since $\sigma_{F_{\apx}}$ is supported on pretentious functions, for
$\sigma_{F_{\apx}}$-almost every $f$ we can find a Dirichlet character
$\chi$ and some $t\in\R$ such that
\[
\D\bigl(f,n_i\mapsto\chi(n_i)n_i^{it}\bigr)<\infty.
\]
However, by the definition of $F_{\apx}$, \eqref{eq:exact-agreement}
does not hold for such $f$. In particular,
\[
f(p)\ne\chi(p)p^{it}
\text{ for infinitely many primes }p.
\]
Write
\[
h_f(n_i):=f(n_i)\overline{\chi(n_i)}n_i^{-it}.
\]

By the choice of $v_i,w_i$, we have
\begin{equation}\label{eq:concentration-residue-conditions}
\begin{gathered}
\gcd(P_{i,j}(v_i,w_i),Q_i)=q_{i,j}\quad(j=1,2,3),\\
P_{i,1}(v_i,w_i)\equiv q_{i,1}\pmod{q_{i,1}^2},\qquad
P_{i,2}(v_i,w_i)\equiv P_{i,3}(v_i,w_i)\equiv1\pmod{q_{i,1}}.
\end{gathered}
\end{equation}
In particular, we can factor $P_{i,2}$ as
\begin{equation}\label{eq:concentration-linear-factors}
P_{i,2}(n_i,m_i)=\prod_{k=1}^{D_i}L_{i,2,k}(n_i,m_i).
\end{equation}
For each linear factor, we apply concentration with
\[
c_{i,k}=\gcd\bigl(Q_i,L_{i,2,k}(v_i,w_i)\bigr),
\]
and variable $L_{i,2,k}(n_i',m_i')$. For the next factor, we use
\[
(q,a,y)=\left(\frac{Q_i}{c_{i,k}},\frac{L_{i,2,k}(v_i,w_i)}{c_{i,k}},Q_3\right),
\]
and variable $L_{i,2,k}(n_i',m_i')$. The gcd conditions in
\eqref{eq:concentration-residue-conditions} ensure that each reduced residue
is coprime to its modulus. Also, dividing $Q_i$ by $c_{i,k}$ leaves every
prime up to $Q_3$ in the modulus: each exponent in $Q_i$ is $3D_iQ_3$,
whereas $c_{i,k}$ removes at most the exponent of $q_{i,2}$. Finally, for $Q_1$ sufficiently
large, the period of $\chi$ divides $q_{i,1}$, and therefore divides each $Q_i/c_{i,k}$.

By the linear concentration estimate,
\[
f\!\left(\frac{L_{i,2,k}(n_i,m_i)}{c_{i,k}}\right)
\approx\chi\!\left(\frac{L_{i,2,k}(v_i,w_i)}{c_{i,k}}\right)
\left(\frac{Q_i L_{i,2,k}(n_i',m_i')}{c_{i,k}}\right)^{it}e^{F_N(f,Q_3)}.
\]
Recall that $\approx$ means that, for every fixed error tolerance, all these
approximations hold within that tolerance for a proportion of pairs
$(n_i',m_i')\in[N]^2\cap S_{i,\ell_i,\delta}$ tending to one as first $N$ and
then $Q_3$ tend to infinity, with $f,\delta,Q_1,Q_2$ fixed.
Since
$\Phi_{Q_1,Q_2}$ is fixed and finite, we may discard the union of the
exceptional sets and have all the approximations hold for every $q_{i,2}$
simultaneously.

Since the period of $\chi$ divides $q_{i,1}$, the congruences in
\eqref{eq:concentration-residue-conditions} give
\[
\chi\!\left(\frac{P_{i,2}(v_i,w_i)}{q_{i,2}}\right)=\overline{\chi(q_{i,2})}.
\]
By complete multiplicativity and the triangle inequality, multiplying
the approximations and averaging over $q_{i,2}$ gives
\[
\cesE_{q_{i,2}\in\Phi_{Q_1,Q_2}}f(P_{i,2}(n_i,m_i))
\approx\bigl(Q_i^{D_i}P_{i,2}(n_i',m_i')\bigr)^{it}e^{D_iF_N(f,Q_3)}
\cesE_{q_{i,2}\in\Phi_{Q_1,Q_2}}h_f(q_{i,2}).
\]
The factor outside the average is independent of $q_{i,2}$ and has modulus
at most one, since $\operatorname{Re}(h_f(p)-1)\le0$ for $p>Q_3$.
By the triangle inequality and Cauchy--Schwarz,

\[
\limsup_{N\to\infty}
\cesE_{(n_i',m_i')\in[N]^2\cap S_{i,\ell_i,\delta}}
\left|\cesE_{q_{i,2}\in\Phi_{Q_1,Q_2}}f(P_{i,2}(n_i,m_i))\right|^2
\le
\left|\cesE_{q_{i,2}\in\Phi_{Q_1,Q_2}}h_f(q_{i,2})\right|^2
+\oh_{Q_3\to\infty}(1).
\]
Here the error may depend on $f,\delta,Q_1,Q_2$.

Let $p>Q_1$ be a prime for which $h_f(p)\ne1$, which exists because
\eqref{eq:exact-agreement} does not hold. For $Q_2$ sufficiently large,
shifting the average by $p$ gives
\[
\cesE_{q_{i,2}\in\Phi_{Q_1,Q_2}}h_f(q_{i,2})
=h_f(p)\cesE_{q_{i,2}\in\Phi_{Q_1,Q_2}}h_f(q_{i,2})+o_{Q_2\to\infty}(1),
\]
so this average tends to zero. Therefore, the contribution from
$F_{\apx}$ is negligible.

We now turn to the second bracket. Choose a Dirichlet character $\chi_j$
and $t_j\in\R$ such that
\[
\D(g_j,n_i\mapsto\chi_j(n_i)n_i^{it_j})<\infty.
\]
For $Q_1$ sufficiently large, the period of $\chi_j$ divides $q_{i,1}$.
Since $P_{i,1}(v_i,w_i)/q_{i,1}\equiv1\pmod{q_{i,1}}$, we have
\[
\chi_j(1)=\chi_j\!\left(\frac{P_{i,1}(v_i,w_i)}{q_{i,1}}\right)=1.
\]
Applying the linear concentration estimate to each factor $L_{i,1,k}(n_i,m_i)$,
and using complete multiplicativity, therefore gives
\[
g_j(P_{i,1}(n_i,m_i))\approx
\bigl(Q_i^{D_i}P_{i,1}(n_i',m_i')\bigr)^{it_j}e^{D_iF_N(g_j,Q_3)}
 g_j(q_{i,1})q_{i,1}^{-it_j}.
\]
The factor outside $g_j(q_{i,1})q_{i,1}^{-it_j}$ is independent of $q_{i,1}$
and has modulus at most one. By the definition of an eigenfunction
and the triangle inequality,
\begin{align*}
&\limsup_{N\to\infty}
\cesE_{(n_i',m_i')\in[N]^2\cap S_{i,\ell_i,\delta}}
\left\|\cesE_{q_{i,1}\in\Phi_{Q_1}}T_{P_{i,1}(n_i,m_i)/\ell_i}G_j\right\|_2^2\\
&\le\|G_j\|_2^2
\left|\cesE_{q_{i,1}\in\Phi_{Q_1}}g_j(q_{i,1})q_{i,1}^{-it_j}\right|^2
+\oh_{Q_3\to\infty}(1).
\end{align*}

If $g_j$ is not identically $n\mapsto n^{it_j}$, then there exists
a prime $p$ for which $g_j(p)p^{-it_j}\ne1$. Shifting the average
by $p$, as before, gives
\begin{align*}
\cesE_{q_{i,1}\in\Phi_{Q_1}}g_j(q_{i,1})q_{i,1}^{-it_j}
&=g_j(p)p^{-it_j}
\cesE_{q_{i,1}\in\Phi_{Q_1}}g_j(q_{i,1})q_{i,1}^{-it_j}\\
&\phantom{{}={}}+\oh_{Q_1\to\infty}(1),
\end{align*}
so this average tends to zero. Thus we may restrict the sum to
those $j$ for which $g_j(n)=n^{it_j}$.
The closed span of all eigenfunctions with eigenvalues of the form
$n\mapsto n^{it}$ is the compact Archimedean factor
\cite[Lemma~2.5]{pret_pyth}.
Therefore, we may replace the eigenfunction sum by its projection onto this
factor, keeping only the eigenfunctions that are measurable with
respect to it.

Finally, the same argument applies to the third bracket, except that if $P_{i,3}$ has a quadratic factor $R_i$, $q_{i,3}\in\Phi_{Q_2,Q_3,R_i}$ only involves primes for which $R_i(1,a)\equiv0\pmod p$ has a solution. Otherwise $q_{i,3}\in\Phi_{Q_2,Q_3}$ uses all primes in that interval.
For a pretentious $f$, choose $\chi,t$ as before. If
$f(p)\ne\chi(p)p^{it}$ for infinitely many such primes, then we can
choose a prime $p>Q_2$ for which $h_f(p)\ne1$. Shifting the average
by $p$ again gives
\begin{align*}
\cesE_{q_{i,3}}h_f(q_{i,3})
&=h_f(p)\cesE_{q_{i,3}}h_f(q_{i,3})
+\oh_{Q_3\to\infty}(1),
\end{align*}
so this average tends to zero. By the quadratic concentration
estimate and the spectral theorem, these components again make a
negligible contribution. Thus, in the third bracket, we may project the product of the
first two terms onto the factor whose spectral measures are supported
on functions satisfying
\[
\begin{gathered}
\mathcal P_i:=
\begin{cases}
\{p:R_i(1,a)\equiv0\pmod p\text{ for some }a\},&P_{i,3}\text{ has }R_i,\\
\{p:p\text{ prime}\},&\text{otherwise},
\end{cases}\\
f(p)=\chi(p)p^{it}\text{ for all but finitely many }p\in\mathcal P_i,
\end{gathered}
\]
for some Dirichlet character $\chi$ and some $t\in\R$.
\end{proof}

\subsection{Finishing the Argument}
\ \\
Write $F_1,F_2$ for the conditional expectations obtained in the
first and second terms, respectively. In particular, $F_1=F_{\ext}$
and $F_2$ is the conditional expectation of $F$ onto the compact
Archimedean factor. Since both are measurable with respect to the
factor just described, the third projection leaves their product
unchanged.

By the previous arguments, we are reduced to the expression
\[
\begin{aligned}
&\cesE_{\ell\in\Psi_L}\ch_B(\ell)
\cesE_{\substack{(n_i',m_i')\in[N]^2\cap S_{i,\ell_i,\delta}\\1\leq i\leq r}}
\int_X F\prod_{i=1}^r\\
&\qquad\cesE_{q_{i,3}}T_{1/P_{i,3}(n_i,m_i)}\Bigl(
\left[\cesE_{q_{i,2}}T_{P_{i,2}(n_i,m_i)}F_1\right]
\left[\cesE_{q_{i,1}}T_{P_{i,1}(n_i,m_i)/\ell_i}F_2\right]
\Bigr)\,d\mu.
\end{aligned}
\]
\begin{claim}
The previous expression is positive.
\end{claim}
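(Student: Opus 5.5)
We will show that the displayed expression is bounded below by $\beta\mu(E)^{2r+1}-O(\e)-O(\delta)$ in the iterated limit. Since $\e$ and $\delta$ are arbitrary, this is what Theorem~\ref{intro:dynamical} requires.

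The plan is to replace each shifted term by its $q$-average, with the action made explicit through the concentration estimates, and then to show that all the twists are close to $1$ on the sectors.

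\emph{The $F_2$ bracket.} $F_2$ lies in the compact Archimedean factor, so up to $O(\e)$ it is a finite sum $\sum_j\lambda_jG_j$ with $T_hG_j=h^{it_j}G_j$. Hence
\[
T_{P_{i,1}(n_i,m_i)/(\ell_iP_{i,3}(n_i,m_i))}G_j=\Bigl(\frac{P_{i,1}(n_i,m_i)}{\ell_iP_{i,3}(n_i,m_i)}\Bigr)^{it_j}G_j .
\]
The ratio is homogeneous of degree $0$, and $n_i/m_i=n_i'/m_i'+O(1/m_i')$. So for all but a vanishing proportion of $(n_i',m_i')$ in the sector this factor is within $O(\delta)$ of $1$, by the sector condition in Definition~\ref{def:good-multiplicative-recurrence} applied with $\Theta=\{t_1,\dots,t_d\}$. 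Thus the $F_2$ term becomes $T_{1/P_{i,3}}$ of $F_2$ up to $O(\delta+\e)$ in $L^2$.

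\emph{The $F_1$ bracket.} $F_1=F_{\ext}$ has spectral measure on functions $f$ with $f(p)=\chi(p)p^{it}$ for all but finitely many $p$. For such $f$ the concentration computation of the previous subsection gives
\[
\cesE_{q_{i,2}}f(P_{i,2}(n_i,m_i))\approx\bigl(Q_i^{D_i}P_{i,2}(n_i',m_i')\bigr)^{it}\cesE_{q_{i,2}}h_f(q_{i,2}).
\]
Here $e^{D_iF_N}=1$ for large $Q_3$, because $h_f(p)=1$ for $p>Q_3$. Also $h_f(q_{i,2})=1$ once $Q_1$ exceeds all the exceptional primes of $f$. The same concentration applied to $P_{i,3}$, combined with $h_f(q_{i,3})=1$ on the primes of $\mathcal P_i$, turns the outer operator $T_{1/P_{i,3}}$ into multiplication by $(Q_i^{D_i}P_{i,3}(n_i',m_i'))^{-it}$ times a bounded factor. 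We also need the extra exponential $e^{\gamma_iG_{R_i,N}}$ to cancel; it should do so because the $\mathcal P_i$-exactness gives $h_f(p)=1$ on exactly the primes with $\omega_{R_i}(p)\ne0$. Combining the two, the $F_1$ term is multiplied by $(P_{i,2}(n_i',m_i')/P_{i,3}(n_i',m_i'))^{it}$. By the first sector condition $|\log(P_{i,2}/P_{i,3})|\le\delta$, so this factor is close to $1$ uniformly over $f$ with $|t|\le T$. The tail $|t|>T$ carries small spectral mass once $T$ is large, and we choose $\delta$ afterwards; this ordering is compatible with the "sufficiently small $\delta$" in the statement.

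\emph{The main obstacle.} The difficulty is handling $T_{1/P_{i,3}}$ applied to the product $F_1\cdot F_2$, since the concentration estimates act spectrally on single functions. I would treat the product as lying in the $\mathcal P_i$-exact factor from the last claim and apply the spectral theorem to that product. Its spectral measure is supported on $\mathcal P_i$-exact pretentious $f$, and the quadratic concentration then shows that $T_{1/P_{i,3}(n_i,m_i)}$ acts as approximately $(P_{i,3}(n_i',m_i'))^{-it}$ times a $\mathbf q$-independent phase. That phase has to combine with the phases already extracted from the two brackets into a ratio that the sectors control. Checking this bookkeeping, in particular that the $Q_i^{D_i}$ powers and the $q_{i,3}$-dependence cancel, is the step I expect to be hardest.

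\emph{Conclusion.} After these reductions the integral is, up to $O(\e+\delta)+o(1)$, equal to $\int_X F\prod_i F_1F_2\,d\mu$. By the tower property this is at least $\int F\cdot\prod_i\E(F\mid\mathcal Y)^2\,d\mu$ for a common factor $\mathcal Y$ that refines both the exact factor and the Archimedean factor. Since $F=\ch_E\ge0$, Hölder's inequality together with the standard argument of Chu/Frantzikinakis--Host gives $\int F\,\E(F\mid\mathcal Y)^{2r}\,d\mu\ge\mu(E)^{2r+1}$. Averaging this over $\ell$ with the weight $\ch_B(\ell)$ then contributes the factor $\beta$.
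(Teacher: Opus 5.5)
\textbf{There is a genuine gap, and it sits at the step you call the main obstacle.} You propose to apply the spectral theorem to the product
\[
Z_i=\bigl[\cesE_{q_{i,2}}T_{P_{i,2}(n_i,m_i)}F_1\bigr]\bigl[\cesE_{q_{i,1}}T_{P_{i,1}(n_i,m_i)/\ell_i}F_2\bigr]
\]
and read off $T_{1/P_{i,3}(n_i,m_i)}Z_i$ as a phase. This does not work.

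\begin{itemize}
\item Knowing that $Z_i$ lies in the $\mathcal P_i$-exact factor only shows that $T_{1/P_{i,3}(n_i,m_i)}$ acts on $Z_i$ approximately as multiplication by $(Q_i^{D_i}P_{i,3}(n_i',m_i'))^{-it}$ in the spectral variable of $Z_i$. That is an operator, not a scalar phase.
\item That $t$ has no usable relation to the spectral parameter of $F_1$ or to the frequencies $t_j$ of $F_2$, because the spectral measure of a product is not determined by those of its factors.
\item So there is nothing to pair with $(P_{i,2}(n_i',m_i'))^{it}$ or $(P_{i,1}(n_i',m_i')/\ell_i)^{it_j}$ to form the two ratios the sectors control.
\end{itemize}

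Your first two paragraphs in fact already apply the full ratios $P_{i,2}/P_{i,3}$ and $P_{i,1}/(\ell_iP_{i,3})$ to single functions. So the proposal is internally inconsistent at this point, and the bookkeeping you defer is never carried out.

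\textbf{The missing idea is elementary.} $T_{1/P_{i,3}}$ comes from a measure-preserving action, so it is multiplicative on bounded functions and preserves integrals. The paper proceeds as follows.
\begin{itemize}
\item Each bracket is essentially independent of the other auxiliary variables, so $\cesE_{\mathbf q}$ can be put back outside.
\item Invariance of $\mu$ then rewrites the integrand as $H_i\,T_{P_{i,2}/P_{i,3}}F_1\cdot T_{P_{i,1}/(\ell_iP_{i,3})}F_2$.
\item Each single-function shift is removed by Cauchy--Schwarz.
\item For $F_1$ this uses the spectral theorem and concentration. No stray bounded factor survives: $F_1$ is exact at all but finitely many primes, not just those in $\mathcal P_i$. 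Hence $h_f(q_{i,2})=h_f(q_{i,3})=1$ and $F_N(f,Q_3)=G_{R_i,N}(f,Q_3)=0$.
\item For $F_2$ it uses the eigenfunction expansion.
\end{itemize}
The spectral measure of a product never enters.

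\textbf{Your final step also needs repair.} You claim that $\int F(F_1F_2)^r\,d\mu\ge\int F\,\E(F\mid\mathcal{Y})^{2r}\,d\mu$ ``by the tower property'' for some $\mathcal{Y}$ refining both factors. This is unjustified, and false in that direction: take $\mathcal{Y}$ to be the full $\sigma$-algebra. Instead, apply Chu's inequality directly to the two sub-$\sigma$-algebras, each used $r$ times. This gives $\int F(F_1F_2)^r\,d\mu\ge\mu(E)^{2r+1}$.
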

\begin{proof}
Because the first term essentially does not depend on $q_{i,1},q_{i,3}$,
the second on $q_{i,2},q_{i,3}$, we can pull the averages outside and obtain, for each fixed $\ell_i$,
\[
\begin{aligned}
&\cesE_{\substack{(n_i',m_i')\in[N]^2\cap S_{i,\ell_i,\delta}\\1\leq i\leq r}}
\int_X F\prod_{i=1}^r\cesE_{q_{i,3}}T_{1/P_{i,3}(n_i,m_i)}\Bigl(\\
&\qquad\left[\cesE_{q_{i,2}}T_{P_{i,2}(n_i,m_i)}F_1\right]
\left[\cesE_{q_{i,1}}T_{P_{i,1}(n_i,m_i)/\ell_i}F_2\right]\Bigr)d\mu\\
&=\cesE_{\mathbf q}
\cesE_{\substack{(n_i',m_i')\in[N]^2\cap S_{i,\ell_i,\delta}\\1\leq i\leq r}}
\int_X F\prod_{i=1}^r\left[
T_{\frac{P_{i,2}(n_i,m_i)}{P_{i,3}(n_i,m_i)}}F_1\cdot
T_{\frac{P_{i,1}(n_i,m_i)}{\ell_iP_{i,3}(n_i,m_i)}}F_2
\right]d\mu+o(1).
\end{aligned}
\]
Here the $n_i$'s and $m_i$'s on the left are interpreted as before, while on the
right we use $n_i=Q_i n_i'+v_i(\mathbf q_i)$ and $m_i=Q_i m_i'+w_i(\mathbf q_i)$ in both numerator terms and the denominator shift.
By invariance of $\mu$,
\begin{align*}
&\int_X T_{P_{i,2}(n_i,m_i)}F_1\cdot T_{P_{i,1}(n_i,m_i)/\ell_i}F_2
\cdot T_{P_{i,3}(n_i,m_i)}H_i\,d\mu\\
&=\int_X H_i\cdot T_{\frac{P_{i,2}(n_i,m_i)}{P_{i,3}(n_i,m_i)}}F_1
\cdot T_{\frac{P_{i,1}(n_i,m_i)}{\ell_i P_{i,3}(n_i,m_i)}}F_2\,d\mu.
\end{align*}
We first show that we can replace the term $T_{\frac{P_{i,2}(n_i,m_i)}{P_{i,3}(n_i,m_i)}}F_1$ by $F_1$.
To do this, consider
\[
\cesE_{q_{i,1}}\cesE_{q_{i,2}}\cesE_{q_{i,3}}
\cesE_{(n_i',m_i')\in[N]^2\cap S_{i,\ell_i,\delta}}
\int_X H_i\cdot
\left[T_{\frac{P_{i,2}(n_i,m_i)}{P_{i,3}(n_i,m_i)}}F_1-F_1\right]
T_{\frac{P_{i,1}(n_i,m_i)}{\ell_i P_{i,3}(n_i,m_i)}}F_2\,d\mu.
\]
where the $\mathbf q_i$-averages are over the same sets as above.
Since $H_i$ and $F_2$ are one-bounded, Cauchy--Schwarz bounds the square
of the absolute value of this expression by
\[
\cesE_{q_{i,1}}\cesE_{q_{i,2}}\cesE_{q_{i,3}}
\cesE_{(n_i',m_i')\in[N]^2\cap S_{i,\ell_i,\delta}}
\left\|T_{\frac{P_{i,2}(n_i,m_i)}{P_{i,3}(n_i,m_i)}}F_1-F_1\right\|_2^2.
\]
By the spectral theorem, this equals
\begin{equation}\label{eq:first-shift-spectral-integral}
\int_{\M}
\cesE_{q_{i,1}}\cesE_{q_{i,2}}\cesE_{q_{i,3}}
\cesE_{(n_i',m_i')\in[N]^2\cap S_{i,\ell_i,\delta}}
\left|f\!\left(\frac{P_{i,2}(n_i,m_i)}{P_{i,3}(n_i,m_i)}\right)-1\right|^2
\,d\sigma_{F_1}(f).
\end{equation}

By the definition of $F_1$, for $\sigma_{F_1}$-almost every $f$
there are $\chi,t$ such that $f(p)=\chi(p)p^{it}$ for all but
finitely many primes $p$. Once $Q_1$ is sufficiently large, we have
\[
h_f(q_{i,2})=h_f(q_{i,3})=1\text{ \ and \ }
F_N(f,Q_3)=\gamma_i G_{R_i,N}(f,Q_3)=0.
\]
The concentration estimates used above therefore give
\[
\begin{aligned}
f(P_{i,2}(n_i,m_i))&\approx\bigl(Q_i^{D_i}P_{i,2}(n_i',m_i')\bigr)^{it},\\
f(P_{i,3}(n_i,m_i))&\approx\bigl(Q_i^{D_i}P_{i,3}(n_i',m_i')\bigr)^{it}.
\end{aligned}
\]
Both approximations hold outside small exceptional sets, whose union
is again small. Since the values of $f$ have modulus one, the triangle
inequality gives
\[
f\!\left(\frac{P_{i,2}(n_i,m_i)}{P_{i,3}(n_i,m_i)}\right)
\approx
\left(\frac{P_{i,2}(n_i',m_i')}{P_{i,3}(n_i',m_i')}\right)^{it}.
\]
Recall that $S_{i,\ell_i,\delta}$ was chosen so that
\[
\left|\log\frac{P_{i,2}(n_i',m_i')}{P_{i,3}(n_i',m_i')}\right|\le\delta
\]
for $(n_i',m_i')\in S_{i,\ell_i,\delta}$. By $|e^{ix}-1|\le|x|$,
\[
\left|
\left(\frac{P_{i,2}(n_i',m_i')}{P_{i,3}(n_i',m_i')}\right)^{it}-1
\right|^2\le\min\{4,t^2\delta^2\}.
\]
Thus, after taking limits, the inner average in
\eqref{eq:first-shift-spectral-integral} is at most
$\min\{4,t^2\delta^2\}$. By dominated convergence, the integral
tends to zero as $\delta\to0$.

This proves that we may replace
$T_{\frac{P_{i,2}(n_i,m_i)}{P_{i,3}(n_i,m_i)}}F_1$ by $F_1$.

Similarly, we next show that we can replace the second term,
$T_{\frac{P_{i,1}(n_i,m_i)}{\ell_iP_{i,3}(n_i,m_i)}}F_2$, by $F_2$.
Since each factor is one-bounded, the triangle inequality and
Cauchy--Schwarz give
\begin{align*}
&\left|\cesE_{q_{i,1}}\cesE_{q_{i,2}}\cesE_{q_{i,3}}
\cesE_{(n_i',m_i')\in[N]^2\cap S_{i,\ell_i,\delta}}
\int_X H_i F_1\left[T_{\frac{P_{i,1}(n_i,m_i)}{\ell_i P_{i,3}(n_i,m_i)}}F_2-F_2\right]d\mu\right|\\
&\le\cesE_{q_{i,1}}\cesE_{q_{i,2}}\cesE_{q_{i,3}}
\cesE_{(n_i',m_i')\in[N]^2\cap S_{i,\ell_i,\delta}}
\left\|T_{\frac{P_{i,1}(n_i,m_i)}{\ell_i P_{i,3}(n_i,m_i)}}F_2-F_2\right\|_2.
\end{align*}
Since $F_2$ belongs to the compact Archimedean factor, recall that we have
$G_1,\ldots,G_d$ with eigenvalues $g_j(n)=n^{it_j}$ such that
\[
\left\|F_2-\sum_{j=1}^d\lambda_jG_j\right\|_2<\e.
\]
We take the finite sum to be an orthogonal projection of $F_2$, so
\[
\sum_{j=1}^d|\lambda_j|^2\|G_j\|_2^2\le\|F_2\|_2^2\le1.
\]
By the triangle inequality and orthogonality,
\[
\left\|T_{\frac{P_{i,1}(n_i,m_i)}{\ell_i P_{i,3}(n_i,m_i)}}F_2-F_2\right\|_2
\le2\e+\left(\sum_{j=1}^d|\lambda_j|^2\|G_j\|_2^2
\left|\left(\frac{P_{i,1}(n_i,m_i)}{\ell_i P_{i,3}(n_i,m_i)}\right)^{it_j}-1\right|^2\right)^{1/2}.
\]
Recall that $S_{i,\ell_i,\delta}$ was chosen for these frequencies so that
\[
\max_{1\le j\le d}
\left|\left(\frac{P_{i,1}(n_i',m_i')}{\ell_i P_{i,3}(n_i',m_i')}\right)^{it_j}-1\right|
\le\delta.
\]
Since $n_i=Q_i n_i'+v_i$, $m_i=Q_i m_i'+w_i$, and $0\le v_i,w_i<Q_i$, we have
\[
\left|
\log\frac{P_{i,1}(n_i,m_i)}{\ell_i P_{i,3}(n_i,m_i)}
-\log\frac{P_{i,1}(n_i',m_i')}{\ell_i P_{i,3}(n_i',m_i')}
\right|\ll\frac1{n_i'}+\frac1{m_i'}.
\]
Using $|e^{ix}-e^{iy}|\le|x-y|$ and averaging, we obtain
\[
\cesE_{q_{i,1}}\cesE_{q_{i,2}}\cesE_{q_{i,3}}
\cesE_{(n_i',m_i')\in[N]^2\cap S_{i,\ell_i,\delta}}
\left\|T_{\frac{P_{i,1}(n_i,m_i)}{\ell_i P_{i,3}(n_i,m_i)}}F_2-F_2\right\|_2
\le2\e+\delta
+o_{N\to\infty}(1).
\]
Letting $\delta\to0$
leaves an error of at most $2\e$.

We have therefore replaced the shifted integral by
\[
\int_XF(F_1F_2)^r\,d\mu
\]
with an error of $O(\e)$ after averaging and taking the limits.
Chu's inequality \cite[Lemma 1.6]{chu-2-commuting} states that, for
any bounded nonnegative function $H$ on a probability space and
sub-$\sigma$-algebras $\mathcal B_1,\ldots,\mathcal B_k$,
\[
\int_X H\prod_{j=1}^k\E(H\mid\mathcal B_j)\,d\mu
\ge\left(\int_X H\,d\mu\right)^{k+1}.
\]
Applying Chu's inequality with $H=F$, since $F_1$ and $F_2$ are conditional expectations of $F$ and each occurs $r$ times, we obtain
\[
\int_XF(F_1F_2)^r\,d\mu
\geq\left(\int_XF\,d\mu\right)^{2r+1}.
\]

\Needspace{10\baselineskip}
Finally, restoring the $\ell$-average gives
\[
\limsup_{L\to\infty}\cesE_{\ell\in\Psi_L}\ch_B(\ell)
\int_XF(F_1F_2)^r\,d\mu
=\overline d_\Psi(B)\int_XF(F_1F_2)^r\,d\mu.
\]
Including the earlier errors, the original expression is therefore at least
\[
\overline d_\Psi(B)\left(\int_XF\,d\mu\right)^{2r+1}-O_r(\e).
\]
Given $\eta>0$, we first take $\e$ sufficiently small and then
$\delta$ sufficiently small so that the preceding errors total less
than $\eta$. This gives the bound
$\beta\mu(E)^{2r+1}-\eta$ in Theorem~\ref{intro:dynamical}, and since
$\overline d_{\Psi}(B)>0$ and $\int_X F\,d\mu>0$, it also proves the
required positivity.
\end{proof}
\appendix
\numberwithin{equation}{section}
\numberwithin{thm}{section}
\renewcommand{\thecor}{\thethm}
\renewcommand{\thelem}{\thethm}

\section{Other examples}

In this appendix, we give some more examples of equations to which
Theorem~\ref{intro:general} applies.

\subsection{The Equations}

\begin{cor}\label{app:scaled}
Let $a,b\in\N$, and let $A,B\subseteq\N$ have positive
multiplicative upper density. Then there are $x,y,w\in A$ and
$z\in B$ such that
\begin{equation}\label{app:scaled-equation}
a^2y^2+b^2(xz)^2=a^2w^2.
\end{equation}
In particular, this equation is partition regular for every
$a,b\in\N$.
\end{cor}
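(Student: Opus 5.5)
The plan is to deduce Corollary~\ref{app:scaled} from Theorem~\ref{intro:general} with $r=1$, using the parameterization of the rational points of the conic $a^2Y^2+b^2X^2=a^2W^2$. For $n,m\in\N$ put
\[
Y=n^2-b^2m^2,\qquad X=2anm,\qquad W=n^2+b^2m^2 .
\]
A direct check gives $a^2Y^2+b^2X^2=a^2\bigl((n^2-b^2m^2)^2+4b^2n^2m^2\bigr)=a^2W^2$.

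I would take the triple
\[
P_1=2a\,n\,m,\qquad P_2=(n-bm)(n+bm),\qquad P_3=n^2+b^2m^2 .
\]
Here $\gamma=1$, $D=2$, and $R=n^2+b^2m^2$ is irreducible over $\Q$. Suppose Theorem~\ref{intro:general} supplies $c\in A$, $\ell\in B$ and $n,m$ with
\[
cP_2/P_3\in A \quad\text{and}\quad cP_1/(\ell P_3)\in A .
\]
Set $w=c$, $y=cP_2/P_3$, $z=\ell$ and $x=cP_1/(\ell P_3)$. Then $x,y,w\in A$, $z\in B$ and $xz=cP_1/P_3$. Multiplying the identity above by $c^2/P_3^2$ gives exactly \eqref{app:scaled-equation}, after the relabeling mentioned in the statement. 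Partition regularity then follows because some cell of any finite partition has positive multiplicative upper density, and we may take $A=B$ equal to that cell.

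It remains to check that $(P_1,P_2,P_3)$ is good for multiplicative recurrence in the sense of Definition~\ref{def:good-multiplicative-recurrence}.
\begin{enumerate}
\item \emph{Factorization.} This holds by construction.
\item \emph{Linear factors.} Here $\widetilde P_3=1$, so the reduced ratios are $P_1$ and $P_2$, and both are nonconstant. The first has the multiplicity-one factor $n$. The second has the multiplicity-one factor $n-bm$, which is independent of both $n$ and $m$.
\item \emph{Sectors.} For $x>\kappa:=b$ we have $P_j(x,1)>0$. As $x\to\infty$,
\[
\log\frac{P_2(x,1)}{P_3(x,1)}=O(x^{-2}),\qquad
\log\frac{P_1(x,1)}{\ell P_3(x,1)}=\log\frac{2a}{\ell}-\log x+O(x^{-2}).
\]
Fix a finite set $\Theta$ and $\delta>0$. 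Choose $K$ large so that the $O(x^{-2})$ terms are at most $\delta/(10(1+\max_{t\in\Theta}|t|))$ on $[K,\infty)$. The set of $s\in\R$ with $\max_{t\in\Theta}\|ts/2\pi\|_{\R/\Z}<\delta/10$ is a set of return times of a linear flow on a torus, so it is syndetic: it meets every window of length $G=G(\Theta,\delta)$. Such an $s$ is surrounded by an interval of return times of length bounded below by some $\eta'(\Theta,\delta)$.

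Given $\ell$, I will choose such a return time $s$ so that $\log x:=\log(2a/\ell)-s$ lies in $[\log K,\log K+G]$. A short interval in $x$ around this point is then the desired $J_{\ell,\delta}$. It lies in $[K,Ke^{G+1}]$ and has length $\gg_{K,\eta'}1$, both independent of $\ell$, as the definition requires.
\end{enumerate}

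The congruence condition~\eqref{item:good-congruences} is the step I expect to be the main obstacle. Here $\omega_1=\omega_2=2$ and $\omega_3=1$. Given an admissible $Q,q_1,q_2,q_3$, I will build $(v,w)$ modulo each prime power $p^{e}\Vert Q$ and glue the pieces with the Chinese remainder theorem. Taking $W$ divisible by $2ab$ and by a suitable power of every prime dividing $2ab$ should let all primes dividing $2ab$ be placed into $q_1$.
\begin{enumerate}
\item \emph{Primes $p\mid q_1$.} To get $p\Vert P_1(v,w)$ with unit part $\equiv1$, take $w\equiv1$ and $v\equiv p\cdot u$, with $u$ the unit chosen so that $2au\equiv1$. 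Then $P_2\equiv P_3\equiv -b^2\cdot(\text{stuff})+O(p^2)$ must be $\equiv1 \pmod{p^{\nu_p(q_1)}}$. This probably forces swapping the roles, i.e. taking $v\equiv1$ and $w\equiv p\,u$. Then
\[
P_2\equiv P_3\equiv 1 \pmod {p^2}
\]
to leading order, and the modulus $q_1^2$ is handled by Hensel-lifting the choice of $u$.
\item \emph{Primes $p\mid q_2$.} Take $v\equiv bw+p^{k}$ with $w$ a unit, where $p^k\Vert q_2$. This makes $p^{k}\Vert (n-bm)$ while keeping $P_1,P_3$ units.
\item \emph{Primes $p\mid q_3$.} Lift a root of $R(1,\alpha)\equiv0$ to get $p^{k}\Vert R(v,w)$ with $v,w$ units. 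This is possible because $p\nmid 2b$ and the root is simple.
\item \emph{All other primes.} Take $v\equiv w\equiv$ values making all three forms units. For example $v=1$, $w=0$ gives $P_1=0$, which is bad, so instead I will use $v=2,w=1$, adjusted for $p\le3$, for which all three forms are units.
\end{enumerate}
Checking these local cases carefully, especially the small primes and the requirement $q_j^{\omega_j}\mid Q$, is where I expect the real work to lie.
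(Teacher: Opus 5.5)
Your reduction follows the paper's own proof. You use the same triple $P_1=2anm$, $P_2=n^2-b^2m^2$, $P_3=n^2+b^2m^2$, and the same relabeling via $a^2P_2^2+b^2P_1^2=a^2P_3^2$. Your sector argument is essentially Lemma~\ref{app:sector-lemma}, and the factorization and linear-factor checks are fine. The gap is in the congruence condition, which is the only substantive verification. There, your local choices fail in two concrete places.

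\textbf{Primes of $q_1$.} At $p^e\Vert q_1$ the definition requires $P_1(v,w)\equiv q_1\pmod{p^{2e}}$ and $P_2\equiv P_3\equiv1\pmod{p^e}$. That is, $\nu_p(P_1(v,w))=e$ with unit part $\equiv q_1/p^e\pmod{p^e}$; it is not $p\Vert P_1$ with unit part $1$. Your choice $w\equiv pu$ gives the wrong valuation and controls $P_2,P_3$ only modulo $p^2$. The normalization $2au\equiv1$ is also impossible at the primes $p\mid 2a$, which you yourself force into $q_1$. There is nothing to Hensel-lift here. With $v=1$ the condition is the linear congruence $2aw\equiv q_1\pmod{q_1^2}$, solved by $w\equiv q_1/(2a)\pmod{q_1^2/(2a)}$ once $2a\mid q_1$. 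If moreover $\nu_p(q_1)\ge2\nu_p(2a)$ for every $p$, then $\nu_p(w)=\nu_p(q_1)-\nu_p(2a)\ge\nu_p(q_1)/2$. Hence $q_1\mid b^2w^2$ and $P_2(1,w)\equiv P_3(1,w)\equiv1\pmod{q_1}$.

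\textbf{Choice of $W$.} A $W$ built only from the primes of $2ab$ is too small. The definition quantifies over all admissible $Q$. So if $p\nmid W$ you must allow $p\mid Q$ with $p\nmid q_1q_2q_3$, and then $P_1,P_2,P_3$ must all be units mod $p$. For $p\in\{3,5\}$ with $p\nmid ab$ this is impossible. $P_1=2avw$ being a unit forces $v,w$ to be units. Then $v^4\equiv w^4\equiv b^4\equiv1\pmod p$, so $P_2P_3=v^4-b^4w^4\equiv0$. No ``adjustment'' rescues $p=3$. Your fixed choice $(2,1)$ also fails at other primes: for $b=1$ it gives $P_3=5$, and in general it fails at any prime dividing $(4-b^2)(4+b^2)$. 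This is why the paper takes $W=120a^2b$. With that $W$, take $v=1$, $w\equiv2b^{-1}$ at the remaining primes outside $q_1q_2q_3$, giving $P_1\equiv4ab^{-1}$, $P_2\equiv-3$, $P_3\equiv5$. Alternatively, note that for $v=1$ at most five residues $w$ are bad, so any $p\ge7$ with $p\nmid 2ab$ admits a good $w$.

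\textbf{Primes of $q_2$ and $q_3$.} Your treatment is fine. In the $q_3$ case, after lifting a root $\alpha$ of $1+b^2\alpha^2$ to modulus $p^{k+1}$, take $w\equiv\alpha+p^k$. This gives $\nu_p(P_3(1,w))=k$ exactly rather than at least $k+1$, which matters because the gcd condition needs exact valuation when $\nu_p(Q)>k$.
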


For $a=b=1$, this is the Pythagorean result stated earlier.

\begin{cor}\label{app:irreducible}
Let $r,s,h,\lambda\in\Z$, where $r\ne0$, $r\ne s$, $\lambda\ne rs$,
and $h^2-4\lambda$ is not a square in $\Z$.
If $A,B\subseteq\N$ have positive multiplicative upper density,
then there are $x,y,w\in A$ and $z\in B$ such that
\begin{equation}\label{app:irreducible-equation}
(w-y-(h+r+s)xz)(\lambda y-rs\,w+(\lambda(r+s)+rsh)xz)
=(\lambda-rs)^2(xz)^2.
\end{equation}
In particular, this equation is partition regular.
\end{cor}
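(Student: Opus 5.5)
The plan is to deduce the corollary from Theorem~\ref{intro:general} with $r=1$. Put $u=xz$, $c=h+r+s$ and $d=\lambda(r+s)+rsh$. The equation~\eqref{app:irreducible-equation} is a homogeneous quadratic in $(w,y,u)$, so it defines a conic, and I will parametrize it rationally. Write
\[
\mathcal A=w-y-cu \text{ \ and \ } \mathcal B=\lambda y-rs\,w+du .
\]
The equation says $\mathcal A\mathcal B=(\lambda-rs)^2u^2$. I would take
\[
u=P_1=(n-rm)(n-sm),\qquad \mathcal A=(\lambda-rs)(n-rm)^2,\qquad \mathcal B=(\lambda-rs)(n-sm)^2 .
\]
Solving the linear system for $(w,y)$ is possible because its determinant is $\lambda-rs\neq0$. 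Using $rs\,c-d=(r+s)(rs-\lambda)$, this gives
\[
y=P_3:=(n-sm)^2+rs(n-rm)^2-(r+s)(n-rm)(n-sm),\qquad w=P_2:=P_3+(\lambda-rs)(n-rm)^2+cP_1 .
\]
One then checks directly that $(P_1,P_2,P_3)$ solves~\eqref{app:irreducible-equation}. If a triple with $P_2$ a product of linear forms and $P_3$ irreducible is needed instead, I would redistribute the roles, for example by also swapping which of $\mathcal A,\mathcal B$ carries which square. I would compute the discriminant of $P_3$ and show it equals a nonzero square multiple of $h^2-4\lambda$. The hypothesis that $h^2-4\lambda$ is not a square then makes $P_3$ irreducible over $\Q$, so that $P_3=R$ with $\gamma=1$.

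Given the triple, Theorem~\ref{intro:general} with $A$ and the given $B$ produces $a\in A$, $\ell\in B$ and $n,m\in\N$. For these, $aP_2/P_3\in A$ and $aP_1/(\ell P_3)\in A$. I set
\[
y=a,\qquad w=aP_2/P_3,\qquad x=aP_1/(\ell P_3),\qquad z=\ell .
\]
Then $xz=aP_1/P_3$. By homogeneity of degree two, the identity for $(P_1,P_2,P_3)$ becomes~\eqref{app:irreducible-equation} for $(x,y,z,w)$ after multiplying through by $a^2/P_3^2$. Partition regularity follows by taking $A=B$ to be a color class of positive upper multiplicative density.

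The real work is checking Definition~\ref{def:good-multiplicative-recurrence}, possibly after a linear change of variables $(n,m)\mapsto(\alpha n+\beta m,m)$ so that the relevant forms are positive for large $n/m$.
\begin{itemize}
\item \emph{Factorization} is immediate once $P_2$ is exhibited as a product of linear forms.
\item \emph{Linear factors in the ratios.} Since $\widetilde P_3=1$, the reduced ratios are just $P_1$ and $P_2$. The factors $n-rm$ and $n-sm$ are independent because $r\neq s$, and each appears once. I would need to check that $P_2$ has a simple linear factor not proportional to $n-rm$ or $n-sm$; I expect the conditions $r\neq0$ and $\lambda\neq rs$ to guarantee this.
\item \emph{Sectors.} I would use that $P_2/P_3\to\lambda_0$ and $P_1/P_3\to\mu_0$ as $n/m\to\infty$. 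For the $P_2$ condition, I would choose $J_{\ell,\delta}$ near a root $x_0$ of $P_2(x,1)=P_3(x,1)$ in $(\kappa,\infty)$; such a root should exist for a suitable choice of sign in the parametrization. For the $P_1$ condition, $x\mapsto P_1(x,1)/P_3(x,1)$ varies continuously and nonconstantly near $x_0$. So a pigeonhole/Kronecker argument on the finitely many phases $t\log(P_1/(\ell P_3))$ modulo $2\pi$ gives a subinterval of length bounded below independently of $\ell$ on which all $d$ phases are within $\delta$ of $0$; a dilation by $\ell$ only shifts the phases.
\item \emph{Congruences.} This I expect to be the main obstacle. Given admissible $Q,q_1,q_2,q_3$, I would build $(v,w)$ prime by prime by the Chinese remainder theorem, treating $p\mid q_1$, $p\mid q_2$, $p\mid q_3$ and $p\nmid q_1q_2q_3$ separately. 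I would use Hensel lifting and the fact that for $p\mid q_3$ the form $R$ has a root mod $p$ to hit exact valuations. For primes above a fixed $W$, I would use that the linear forms are pairwise nonproportional mod $p$. The integer $W$ is then chosen to absorb the finitely many bad primes: those dividing the resultants and $\lambda-rs$, $r$, $r-s$ and the discriminant of $R$.
\end{itemize}
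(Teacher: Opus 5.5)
Your overall route (parametrize the conic and apply Theorem~\ref{intro:general} with $r=1$) is the paper's, and your parametrization agrees with the paper's up to a change of variables. As written, however, it is mislabeled, and the triple you would feed into the theorem is not good.

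With $X=n-rm$ and $Y=n-sm$, your three forms are $u=XY$, $y=Y^2-(r+s)XY+rsX^2=(Y-rX)(Y-sX)$, and $w=Y^2+hXY+\lambda X^2$. So the form you call $P_3$ (the $y$-form) always splits over $\Q$, as $\bigl((1-r)n+(r^2-s)m\bigr)\bigl((1-s)n+s(r-1)m\bigr)$. Its discriminant is a square, not a square multiple of $h^2-4\lambda$. The irreducible form, with discriminant $(r-s)^2(h^2-4\lambda)$, is your $P_2$. With your assignment $y=a$, $w=aP_2/P_3$, the factorization and linear-factor conditions of Definition~\ref{def:good-multiplicative-recurrence} therefore fail, since $P_2$ has no linear factor at all. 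Swapping $\mathcal A$ and $\mathcal B$ only interchanges $X$ and $Y$. What has to be swapped is $y$ and $w$: $P_2$ is the $y$-form, $P_3$ is the $w$-form, and $w=a$, $y=aP_2/P_3$, $x=aP_1/(\ell P_3)$, $z=\ell$.

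Even after relabeling, your coordinates do not work, and the repairs you suggest cannot fix them.

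\emph{Congruences.} Let $p\mid r-s$ (such $p$ exists unless $|r-s|=1$), and take an admissible choice with $p\mid q_1$. The definition allows this for every $W$ (e.g.\ $q_1=pW$), and it always occurs in the proof since $q_1\in\Phi_{Q_1}$. For a residue pair $(v,v')$, the requirement $P_1(v,v')\equiv q_1\pmod{q_1^2}$ gives $p\mid(v-rv')(v-sv')$. Since $v-rv'\equiv v-sv'\pmod p$, $p$ divides both factors, hence divides $P_2(v,v')$ and $P_3(v,v')$. This contradicts $P_2\equiv P_3\equiv1\pmod{q_1}$. So putting the bad primes into $W$ places them exactly where the obstruction lives. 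Since integer points land in the index-$|r-s|$ sublattice $X\equiv Y\pmod{r-s}$, no integral substitution removes it.

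\emph{Sectors.} The $n^2$-coefficients $(1-r)(1-s)$ and $1+h+\lambda$ can be negative, and $(n,m)\mapsto(\alpha n+\beta m,m)$ only multiplies them by $\alpha^2$. Moreover, choosing $J_{\ell,\delta}$ near a root $x_0\in(\kappa,\infty)$ of $P_2(x,1)=P_3(x,1)$ fails. The constraint $|\log(P_2/P_3)|\le\delta$ confines $x$ to a window shrinking with $\delta$. On that window $\log(P_1/P_3)$ is nearly constant, because $P_1(x_0,1)>0$. So already for one frequency $t$ you would need $t\log\ell$ to lie in a fixed short arc modulo $2\pi$ for every $\ell$. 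You need a regime where $P_2/P_3\to1$ while $\log(P_1/P_3)$ is unbounded, so that it sweeps an interval longer than the gaps between return times of $\tau\mapsto(e^{it\tau})_{t\in\Theta}$. In your coordinates that regime is $n/m\to r$, not $n/m\to\infty$.

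\emph{The missing step} is the rational substitution $(n,m)\mapsto(rn-sm,\,n-m)$, which sends $(X,Y)$ to $((r-s)m,(r-s)n)$. Dividing by $(r-s)^2$ gives the paper's triple $P_1=nm$, $P_2=(n-rm)(n-sm)$, $P_3=n^2+hnm+\lambda m^2$. There $P_2/P_3\to1$ and $P_1/P_3\sim1/x$ as $x\to\infty$, which is what Lemma~\ref{app:sector-lemma} uses. The $q_1$-conditions hold with $v=1$ and $v'\equiv q_1\pmod{q_1^2}$, because $P_2(1,0)=P_3(1,0)=1$. This makes absorbing $r$, $r-s$, $\lambda$, $h^2-4\lambda$ and similar quantities into $W$ harmless. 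The remaining primes are then handled one at a time; for $p\mid q_3$ you lift a root of $1+hv'+\lambda v'^2$ modulo $p^{e+1}$ and add $p^e$.
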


For example, taking $r=1$, $s=-1$, $h=1$, and $\lambda\in\N$
gives the equation
\[
(w-y-xz)(w+\lambda y-xz)=(\lambda+1)^2(xz)^2.
\]
Thus, for any $A,B\subseteq\N$ of positive multiplicative upper
density, there are $x,y,w\in A$ and $z\in B$ satisfying this
equation.

\begin{cor}\label{app:split}
Let $r,s,t,u\in\Z$ satisfy
\[
rt(r-s)(t-u)(r-t)(r-u)(t-s)(tu-rs)\ne0.
\]
If $A,B\subseteq\N$ have positive multiplicative upper density,
then there are $x,y,w\in A$ and $z\in B$ such that
\begin{equation}\label{app:split-equation}
(w-y-(r+s-t-u)xz)(tu\,y-rs\,w+(tu(r+s)-rs(t+u))xz)=(tu-rs)^2(xz)^2.
\end{equation}
In particular, this equation is partition regular.
\end{cor}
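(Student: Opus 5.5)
The plan is to apply Theorem~\ref{intro:general} with $r=1$ to a single triple $(P_1,P_2,P_3)$ of binary quadratic forms that parameterizes the conic behind \eqref{app:split-equation}. Write $c=r+s-t-u$ and $d=tu(r+s)-rs(t+u)$. I would take
\[
P_1(n,m)=nm,\qquad P_2(n,m)=(n-tm)(n-um),\qquad P_3(n,m)=(n-rm)(n-sm).
\]
Expanding gives
\[
P_2-P_3-cP_1=(tu-rs)m^2
\text{ \ and \ }
tuP_3-rsP_2+dP_1=(tu-rs)n^2.
\]
Hence $(W,Y,V)=(P_2,P_3,P_1)$ satisfies $(W-Y-cV)(tuY-rsW+dV)=(tu-rs)^2V^2$. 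Since this equation is homogeneous of degree two in $(y,w,xz)$, Theorem~\ref{intro:general} finishes the proof. It supplies $a\in A$, $\ell\in B$ and $n,m$ with $aP_2/P_3\in A$ and $aP_1/(\ell P_3)\in A$. We then set
\[
y=a,\qquad w=aP_2/P_3,\qquad x=aP_1/(\ell P_3),\qquad z=\ell.
\]
This gives $xz=aP_1/P_3$, and \eqref{app:split-equation} follows by multiplying the identity above by $a^2/P_3^2$. Partition regularity follows by taking $A=B$ to be a colour class of positive upper density.

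The remaining work is to verify Definition~\ref{def:good-multiplicative-recurrence} for this triple, with $\gamma=0$ and $D=2$.

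\emph{Factorization} is immediate.

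\emph{Linear factors in the ratios.} The first reduced ratio $nm/((n-rm)(n-sm))$ is nonconstant. Since $r\neq0$, the factor $n-rm$ is distinct from $n$ and $m$. Since $r\neq s$, the factor $n-rm$ occurs exactly once. So the first ratio has a multiplicity-one factor; if $s=0$, one copy of $n$ cancels but $n-rm$ survives. The second ratio $(n-tm)(n-um)/((n-rm)(n-sm))$ has the factor $n-tm$ with multiplicity one, because $t\neq u$, $t\neq r$ and $t\neq s$. This factor is independent of $n$, $m$, $n-rm$ and $n-sm$ because $t\neq0$ and $t\neq r,s$, so it is independent of every factor remaining in the first ratio. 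The hypotheses $tu\neq rs$ and $(r-u)\neq0$ further ensure that the second ratio does not collapse to a constant.

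\emph{Sectors.} For $x>\kappa:=1+\max(|r|,|s|,|t|,|u|,0)$ all three of $P_j(x,1)$ are positive. As $x\to\infty$, the ratio $P_2(x,1)/P_3(x,1)\to1$, while
\[
\phi(x):=\log\frac{P_1(x,1)}{P_3(x,1)}=-\log x+O(1/x)
\]
is continuous and strictly decreasing for large $x$. The target condition for each $\ell$ is that $\phi(x)-\log\ell$ lies in the Bohr set $\{s\in\R:|e^{it_js}-1|\le\delta\ \forall j\}$. This set contains an interval around $0$ of length $\gg\delta/\max|t_j|$ and is syndetic in $\R$ with some gap $G=G(\Theta,\delta)$. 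I would choose $a$ large enough that $|\log P_2/P_3|\le\delta$ on $[a,\infty)$, and set $b=ae^{G+2}$. Then $\phi([a,b])$ contains an interval of length $>G$, so for every $\ell$ it meets a translate of the Bohr neighbourhood. Pulling back by $\phi$, whose derivative is bounded on $[a,b]$, gives $J_{\ell,\delta}\subseteq[a,b]$ of length $\ge\eta(\Theta,\delta)$ uniformly in $\ell$.

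\emph{Congruences.} I expect this to be the main obstacle. Given admissible $Q,q_1,q_2,q_3$, I need $v,w$ with
\[
\gcd(P_j(v,w),Q)=q_j,\qquad P_1\equiv q_1\pmod{q_1^2},\qquad P_2\equiv P_3\equiv1\pmod{q_1}.
\]
I would work prime by prime and glue with the Chinese remainder theorem.
\begin{itemize}
\item For $p\mid q_1$, with $p^e\Vert q_1$: take $w\equiv1$ and $v\equiv p^e\pmod{p^{2e}}$. Then $P_1\equiv p^e$, and $P_2\equiv tu$, $P_3\equiv rs$ modulo $p^e$. This gives the required form only after replacing $w$ by a suitable unit and using $W$ to absorb the primes dividing $rstu(t-u)(r-s)\cdots$. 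So I would choose $W$ to be the product of the primes dividing the nonzero discriminant quantity in the hypothesis, and handle those small primes by adjusting $(v,w)$ modulo a fixed power. Here $W\mid q_1$ guarantees they all lie in $q_1$.
\item For $p\mid q_2$ with $p\nmid W$: take $w\equiv1$ and $v\equiv t+p^{e}\pmod{p^{e+1}}$, so that $p^e\Vert n-tm$ while the other forms are units, using $t\not\equiv u,r,s,0$ modulo $p$.
\item For $p\mid q_3$: proceed similarly with $v\equiv r+p^e$.
\item For the remaining primes $p\mid Q$: choose $v\equiv2w$ (or any residue avoiding the finitely many roots), so that all forms are units. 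This needs $p$ to exceed the number of linear factors, again absorbed into $W$.
\end{itemize}
The exponent hypothesis $q_j^{\omega_j}\mid Q$ lets me work modulo $p^{e+1}$ inside $Q$.
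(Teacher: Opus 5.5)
The parts other than the congruence condition are fine. Your triple is the paper's with $(r,s)$ and $(t,u)$ interchanged, so your base point is $y$ rather than $w$. The hypothesis and the equation are invariant under this swap together with $y\leftrightarrow w$. So your identities, the deduction from Theorem~\ref{intro:general}, the ratio condition, and your sector argument (a Bohr-set version of Lemma~\ref{app:sector-lemma}) all hold.

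\textbf{The gap.} It is in the congruence condition at primes $p^e\Vert q_1$, which is the main content of this corollary. With $n\equiv p^e$ and $m$ a unit you get $P_2\equiv tu\,m^2$ and $P_3\equiv rs\,m^2\pmod{p^e}$. Requiring both to be $\equiv1$ forces $p^e\mid tu-rs$. Since $tu\neq rs$ and admissible $q_1$ (e.g.\ $q_1\in\Phi_{Q_1}$) have arbitrarily large exponents, this fails at \emph{every} prime of $q_1$, not just at finitely many small ones. So neither changing the unit nor enlarging $W$ can fix it. If $s=0$, which the hypothesis allows, even $P_3\equiv0\pmod p$. Splitting $\nu_p(nm)=e$ with both $\nu_p(n),\nu_p(m)>0$ also fails, since then $p\mid P_2$. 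There is a secondary slip as well: the local requirement is $P_1\equiv q_1\pmod{p^{2e}}$, not $P_1\equiv p^e$. These differ unless $q_1/p^e\equiv1\pmod{p^e}$.

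\textbf{The fix.} Put the $p$-power in the other coordinate: $\nu_p(m)=e$, with $n$ a unit satisfying $n^2\equiv1$. The paper simply takes $v=1$ and $w\equiv q_1\pmod{q_1^2}$. Then $P_1(1,w)=w\equiv q_1\pmod{q_1^2}$, and $q_1\mid w$ gives $P_2(1,w)\equiv P_3(1,w)\equiv1\pmod{q_1}$. This row works uniformly, small primes included. So the only job of $W$ is to force the finitely many bad primes into $q_1$. The paper uses $W=|30\,rt(r-s)(t-u)(r-t)(r-u)(t-s)\,c\,P_2(c,1)P_3(c,1)|$ with $c=1+|r|+|s|+|t|+|u|$, which is symmetric under your relabeling. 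With this change, your rows for $q_2$, $q_3$ and the remaining primes of $Q$ go through as you describe, provided you pick a residue avoiding the roots rather than insisting on $v\equiv2w$. Up to your relabeling and rescaling $(v,w)$ by a unit, they agree with the paper's choices.
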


For a concrete example, take $r=1$, $s=-1$, $t=b$, and $u=-b-1$,
where $b\geq2$. We obtain $x,y,w\in A$ and $z\in B$ such that
\[
(y-w+xz)(b(b+1)y-w+xz)=(b(b+1)-1)^2(xz)^2.
\]
\subsection{Proofs}

For each equation, we choose a triple of homogeneous polynomials
$(P_1,P_2,P_3)$ to which the general theorem applies. Its conclusion
can be written as
\begin{equation}\label{app:conclusion}
x=\frac{k P_1(n,m)}{z}\in A,\;
y=k P_2(n,m)\in A
\text{ \ \ and \ \ }w=k P_3(n,m)\in A
\end{equation}
for some $k\in\Q_{>0}$, $z\in B$, and $n,m\in\N$.
Indeed, in the notation of Theorem~\ref{intro:general}, we take
$k=a/P_3(n,m)$; the identities below do not require $k$ to be an integer.

The main point is to choose the residues used in the arithmetic progressions
so as to verify condition~\eqref{item:good-congruences} in
Definition~\ref{def:good-multiplicative-recurrence}. The constructions
below follow the same idea as \cite[Lemma~4.2]{pret_pyth}, with
the residues written out for each family. For the modular condition, we must choose a fixed $W$ so that the following holds:
Let $q_1,q_2,q_3$ be admissible pairwise coprime divisors of $Q$
with $W\mid q_1$. When $P_3$ is an irreducible quadratic, we also
have $P_3(1,a)\equiv0\pmod{q_3}$ for some $a\in\Z$.
Then we must show there exist $v,w\in\N$ satisfying the following three conditions:
\begin{equation}\label{app:gcd}
\gcd(P_j(v,w),Q)=q_j\;(j=1,2,3)
\end{equation}
and
\begin{align}
P_1(v,w)&\equiv q_1\pmod{q_1^2}\label{app:congruences}\\
\intertext{and}
P_2(v,w)&\equiv P_3(v,w)\equiv1\pmod{q_1}.\label{app:congruences-second}
\end{align}

In fact, in all three examples, we take $v=1$ and find $w$
satisfying \eqref{app:gcd}, \eqref{app:congruences}, and
\eqref{app:congruences-second}. We will choose $w$ separately at each prime dividing $Q$.
Since $q_1,q_2,q_3$ are pairwise coprime, the conditions at each
prime depend on which $q_i$ it divides, if any. After finding a
suitable congruence class at each prime power, we use the Chinese
remainder theorem to obtain a positive integer $w$ satisfying
all the conditions.

We first record an explicit formula for the square roots that
occur in two of the constructions. If $u\equiv1\pmod p$, then
\begin{equation}\label{app:power}
u^{p^e}\equiv1\pmod{p^{e+1}}\;(e\geq1),
\end{equation}
by the binomial expansion.
Consequently, if $p$ is odd, $p\nmid D$, and
$\sigma^2\equiv D\pmod p$, then
\begin{equation}\label{app:root}
R=\sigma^{p^e}D^{-(p^e-1)/2}
\text{ \ \ satisfies \ \ }
R^2\equiv D\pmod{p^{e+1}}
\text{ \ \ and \ \ }R\equiv\sigma\pmod p.
\end{equation}
After all, $R^2=D(\sigma^2D^{-1})^{p^e}$, so the first
congruence follows directly from \eqref{app:power}.

The polynomials we will choose have the form given below. We therefore
need the following lemma.

\begin{lem}[The sector condition]\label{app:sector-lemma}
Let $c>0$ and $\beta,\gamma,h,\lambda\in\R$, and let
\[
P_1(n,m)=cnm,\; P_2(n,m)=n^2+\beta nm+\gamma m^2
\text{ \ \ and \ \ }P_3(n,m)=n^2+hnm+\lambda m^2.
\]
There is a $\kappa>0$ such that $P_j(x,1)>0$ for $x>\kappa$
and $j=1,2,3$. For every finite nonempty set $\Theta\subseteq\R$
and every $\delta>0$, there are constants $\kappa<a<b<\infty$
and $\eta_0>0$ such that, for each $\ell\in\N$, there is an open
interval $J_{\ell,\delta}\subseteq[a,b]$ of length at least $\eta_0$
on which
\[
\left|\log\frac{P_2(x,1)}{P_3(x,1)}\right|\leq\delta
\text{ \ \ and \ \ }
\max_{t\in\Theta}\left|\left(\frac{P_1(x,1)}{\ell P_3(x,1)}\right)^{it}-1\right|\leq\delta.
\]
The constants $a,b,\eta_0$ are independent of $\ell$.
\end{lem}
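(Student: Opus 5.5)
The plan is to treat the two requirements separately. The first one ($P_2/P_3$ close to $1$) holds on any interval far enough to the right. The second one (the phase condition) is an almost-periodicity statement in the variable $\log$ of the ratio, and it can be met uniformly in $\ell$ because changing $\ell$ only translates that variable.

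First, $\kappa$. Since $c>0$ and $P_2(x,1),P_3(x,1)$ are monic quadratics in $x$, all three of $P_j(x,1)$ are positive for $x$ large, which gives $\kappa$. Moreover $P_2(x,1)/P_3(x,1)\to1$ as $x\to\infty$, so there is $X_0>\kappa$ with
\[
\left|\log\frac{P_2(x,1)}{P_3(x,1)}\right|\le\delta\quad\text{for all }x\ge X_0 .
\]
Any $[a,b]$ with $a\ge X_0$ therefore handles the first condition.

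Next, the phase condition. Put $\varphi(x)=\log\frac{cx}{P_3(x,1)}$, so that $\frac{P_1(x,1)}{\ell P_3(x,1)}=e^{\varphi(x)-\log\ell}$. We have
\[
\varphi'(x)=\frac1x-\frac{2x+h}{P_3(x,1)}=-\frac1x+O(x^{-2}).
\]
Enlarging $X_0$ if necessary, $\varphi$ is strictly decreasing on $[X_0,\infty)$ and $\varphi(x)\to-\infty$. On any compact $[a,b]\subset[X_0,\infty)$ we have $|\varphi'|\le K:=2/a$.

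Now consider the Bohr set $\mathcal B_\rho=\{y\in\R:\max_{t\in\Theta}|e^{ity}-1|\le\rho\}$. The key input, and the main step, is that $\mathcal B_{\delta/2}$ is syndetic in $\R$: there is $M=M(\Theta,\delta)$ such that every interval of length $M$ meets it. I would justify this by the standard argument. The map $y\mapsto(e^{ity})_{t\in\Theta}$ is a continuous homomorphism into a compact torus, and its image is dense in a closed connected subgroup, on which translation by the image is minimal. Minimal flows have syndetic return times to any neighbourhood of the identity, and $0\in\mathcal B_{\delta/2}$. Also, by $|e^{ix}-e^{iy}|\le|x-y|$, if $y_0\in\mathcal B_{\delta/2}$ and $|y-y_0|\le\rho_0:=\delta/(2\max_{t\in\Theta}|t|)$, then $y\in\mathcal B_\delta$. (If $\Theta=\{0\}$ the condition is vacuous.)

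Finally, choose $a=X_0$ and $b$ so large that $\varphi(a)-\varphi(b)>M+2\rho_0$; this is possible since $\varphi\to-\infty$. Fix $\ell$. The set $\varphi([a,b])-\log\ell$ is an interval of length greater than $M+2\rho_0$. Its middle part, of length $M$ after removing $\rho_0$ from each end, contains a point $y_0\in\mathcal B_{\delta/2}$; write $y_0=\varphi(x_0)-\log\ell$. Since $|\varphi'|\le K$, every $x$ with $|x-x_0|<\rho_0/K$ satisfies $|\varphi(x)-\log\ell-y_0|<\rho_0$, so $\varphi(x)-\log\ell\in\mathcal B_\delta$. Because $\varphi$ is monotone and $y_0$ lies at least $\rho_0$ from the ends of the image interval, this $x$-interval stays inside $[a,b]$, by the mean value theorem applied to $\varphi$ on $[a,x_0]$ and $[x_0,b]$.

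Take $J_{\ell,\delta}=(x_0-\rho_0/K,\,x_0+\rho_0/K)$ and $\eta_0=2\rho_0/K$. The constants $a$, $b$, $\eta_0$ depend only on $c,h,\lambda,\beta,\gamma,\Theta,\delta$ and not on $\ell$, since $\ell$ only translated the target interval. The main obstacle is the uniform syndeticity constant $M$. The rest is bookkeeping, and the point to get right is taking $b$ large enough in terms of $M$ before $\ell$ is chosen.
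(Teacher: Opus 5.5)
Your proof is correct and follows essentially the same route as the paper. Both arguments use that changing $\ell$ only translates the logarithmic phase, together with the fact that the good phases form a relatively dense set; the paper gets this directly from compactness by noting that $\phi([-S,S])$ is $\delta/4$-dense in the orbit closure $G$, which is exactly the syndeticity of your Bohr set. The remaining difference is cosmetic: you work with the exact monotone phase $\varphi$ and the mean value theorem, whereas the paper substitutes $x=e^v$ and absorbs $\log(1+he^{-v}+\lambda e^{-2v})$ as a phase error of at most $\delta/4$.
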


\begin{proof}
Choose $\kappa$ so that all three polynomials are
positive for $n>\kappa m$. As in the Pythagorean case, the two
ratios we need to control are
\[
\frac{P_1(x,1)}{\ell P_3(x,1)}
\text{ \ \ and \ \ }\frac{P_2(x,1)}{P_3(x,1)}.
\]
The second tends to $1$ as $x\to\infty$. For the first, writing
$x=e^v$ gives
\begin{equation}\label{app:log-ratio}
\log\frac{P_1(e^v,1)}{\ell P_3(e^v,1)}
=\log c-\log\ell-v-\log(1+he^{-v}+\lambda e^{-2v}).
\end{equation}

It suffices to consider $0<\delta\leq1$. Fix a finite nonempty
set of frequencies $\Theta\subset\R$ and such a $\delta$, and put
$T=\max(\{1\}\cup\{|t|:t\in\Theta\})$.
Choose $v_0$ large enough that $e^{v_0}>\kappa$ and, for every
$v\geq v_0$,
\[
\left|\log\frac{P_2(e^v,1)}{P_3(e^v,1)}\right|\leq\delta
\text{ \ \ and \ \ }
T\left|\log(1+he^{-v}+\lambda e^{-2v})\right|\leq\delta/4.
\]
It remains to arrange the finitely many phases coming from
$\log c-\log\ell-v$, with $v$ in a bounded interval independent
of $\ell$.

For this, put $\phi(v)=(e^{itv})_{t\in\Theta}$ and let $G$ be
the closure of $\phi(\R)$ in the torus. By compactness, there
is $S>0$ such that $\phi([-S,S])$ is $\delta/4$-dense in $G$,
using the maximum of the coordinate distances. Every translate
of this interval has the same property, since translation of
$v$ multiplies $\phi(v)$ by an element of $G$. In particular,
for each $\ell$ we can choose
$v_\ell\in[v_0+1,v_0+1+2S]$ such that
\[
\max_{t\in\Theta}
\left|e^{it(\log c-\log\ell-v_\ell)}-1\right|<\delta/4.
\]

Put
\[
\eta=\min\{1/2,\delta/(4T)\}
\text{ \ \ and \ \ }J_\ell=(e^{v_\ell-\eta},e^{v_\ell+\eta}).
\]
For $x=e^v\in J_\ell$, the triangle inequality and
\eqref{app:log-ratio} give
\[
\left|\left(\frac{P_1(x,1)}{\ell P_3(x,1)}\right)^{it}-1\right|
\leq\delta/4+T|v-v_\ell|+\delta/4\leq3\delta/4
\;(t\in\Theta).
\]
The other logarithmic ratio is at most $\delta$ by the choice of
$v_0$. Moreover, all these intervals lie in
$[e^{v_0},e^{v_0+2S+2}]$, and each has length at least
$2\eta e^{v_0}$. Thus both the containing interval and the lower
bound on the lengths are independent of $\ell$, as required.
\end{proof}

\begin{proof}[Proof of Corollary~\ref{app:scaled}]
We use the polynomials
\[
P_1(n,m)=2anm,\; P_2(n,m)=n^2-b^2m^2
\text{ \ \ and \ \ }P_3(n,m)=n^2+b^2m^2.
\]
Take $W=120a^2b$. The first two polynomials split into the pairwise nonproportional
linear factors $n,m,n-bm,n+bm$, and the third is an irreducible
quadratic. Thus the factorization conditions hold.

Fix admissible $q_1,q_2,q_3$ and $Q$. If $p^e\Vert q_3$ (that is, $\nu_p(q_3)=e\geq1$), then
$p\equiv1\pmod4$, because \eqref{eq:good-root-condition} gives
a solution to $1+b^2x^2\equiv0\pmod p$. Put
\[
\iota_p=\left(\frac{p-1}{2}\right)!
\text{ \ \ and \ \ }I_{p,e}=\iota_p^{p^e}.
\]
We claim that $(p-1)!\equiv-1\pmod p$. After all,
$(p-1)!$ contains the product of every number and its multiplicative inverse,
except for $1$ and $p-1\equiv-1\pmod p$, which are their own
inverses and have product $-1\pmod p$. We also claim that $\iota_p^2\equiv-1\pmod p$. After all, since
$(p-1)/2$ is even, since $p\equiv1\pmod4$, we have
\[
\iota_p^2=\left(\frac{p-1}{2}\right)!\prod_{j=1}^{(p-1)/2}j
\equiv\left(\frac{p-1}{2}\right)!\prod_{j=1}^{(p-1)/2}(p-j)
=(p-1)!\equiv-1\pmod p.
\] By \eqref{app:power},
$I_{p,e}^2\equiv-1\pmod{p^{e+1}}$.

Choose $w$ to satisfy
\begin{equation}\label{app:scaled-residues}
\begin{aligned}
w&\equiv q_1/(2a) &&\pmod{q_1^2/(2a)},\\
w&\equiv b^{-1}(1-p^e) &&\pmod{p^{e+1}} &&\text{ \ \ for \ \ }(p^e\Vert q_2),\\
w&\equiv b^{-1}(I_{p,e}+p^e) &&\pmod{p^{e+1}} &&\text{ \ \ for \ \ }(p^e\Vert q_3),\\
w&\equiv 2b^{-1} &&\pmod p &&\text{ \ \ for \ \ }(p\mid Q,\ p\nmid q_1q_2q_3).
\end{aligned}
\end{equation}
The quotient in the first row is an ordinary integer quotient:
$2a\mid q_1$. Every prime in the other rows is coprime to
$60ab$, so all the indicated inverses exist.

The first row gives $2aw\equiv q_1\pmod{q_1^2}$. If
$p^e\Vert q_1$ and $c=\nu_p(2a)$, then $e\geq2c$ by definition of $W$, and
$\nu_p(w)=e-c$. In particular, $q_1\mid b^2w^2$, and hence
\[
P_1(1,w)\equiv q_1\pmod{q_1^2}
\text{ \ \ and \ \ }P_2(1,w)\equiv P_3(1,w)\equiv1\pmod{q_1}.
\]
This gives the required valuations at the primes dividing
$q_1$.

If $p^e\Vert q_2$, then the second row gives
\[
1-b^2w^2\equiv2p^e\pmod{p^{e+1}}
\text{ \ \ and \ \ }1+b^2w^2\equiv2\pmod p.
\]
Thus $\nu_p(P_2(1,w))=e$, while $P_1(1,w)$ and $P_3(1,w)$
are nonzero modulo $p$. Likewise, if $p^e\Vert q_3$, then
\[
1+b^2w^2\equiv2I_{p,e}p^e\pmod{p^{e+1}}
\text{ \ \ and \ \ }1-b^2w^2\equiv2\pmod p.
\]
This gives $\nu_p(P_3(1,w))=e$, with the other two values
nonzero modulo $p$. Finally, at a prime in the last row of
\eqref{app:scaled-residues}, the three values are
\[
P_1(1,w)\equiv4ab^{-1},\; P_2(1,w)\equiv-3
\text{ \ \ and \ \ }P_3(1,w)\equiv5\pmod p.
\]
They are all nonzero. This proves \eqref{app:gcd} and
\eqref{app:congruences} and \eqref{app:congruences-second}.

The polynomials are positive for $n>bm$, and the sector condition
follows from Lemma~\ref{app:sector-lemma}.
Finally,
\[
a^2P_2(n,m)^2+b^2P_1(n,m)^2=a^2P_3(n,m)^2.
\]
Substituting \eqref{app:conclusion} gives
\eqref{app:scaled-equation}.
\end{proof}

\begin{proof}[Proof of Corollary~\ref{app:irreducible}]
This time we take
\[
P_1(n,m)=nm,\; P_2(n,m)=(n-rm)(n-sm)
\text{ \ \ and \ \ }P_3(n,m)=n^2+hnm+\lambda m^2,
\]
and put $D=h^2-4\lambda$.
The discriminant condition makes $P_3$ irreducible. The first
two polynomials split into the linear factors $n$, $m$, $n-rm$, and $n-sm$.
This gives the factorization conditions.

Let
\[
c=2+|r|+|s|+|h|+|\lambda|,
\]
and choose
\[
W=\left|30r(r-s)\lambda D P_3(r,1)P_3(s,1)cP_2(c,1)P_3(c,1)\right|.
\]
Every factor here is nonzero because $P_3$ has no rational
root and our choice of $c$ makes both $P_2(c,1)$ and $P_3(c,1)$
positive.

Fix admissible $q_1,q_2,q_3$ and $Q$. If $p^e\Vert q_3$, then
\eqref{eq:good-root-condition} gives a solution to $P_3(1,a)\equiv0\pmod p$. Since $p\nmid2\lambda D$,
this gives a nonzero square root of $D$ modulo $p$; indeed,
\[
(2\lambda a+h)^2-D=4\lambda P_3(1,a).
\]
Let $\sigma_p$ be the least positive square root of $D$ modulo
$p$, and put
\[
R_{p,e}=\sigma_p^{p^e}D^{-(p^e-1)/2}\pmod{p^{e+1}}.
\]
By \eqref{app:root}, this is a square root of $D$ modulo
$p^{e+1}$. Choose $w$ to satisfy
\[
\begin{aligned}
w&\equiv q_1 &&\pmod{q_1^2},\\
w&\equiv r^{-1}(1-p^e) &&\pmod{p^{e+1}} &&\text{ \ \ for \ \ }(p^e\Vert q_2),\\
w&\equiv (R_{p,e}-h)(2\lambda)^{-1}+p^e
&&\pmod{p^{e+1}} &&\text{ \ \ for \ \ }(p^e\Vert q_3),\\
w&\equiv c^{-1} &&\pmod p &&\text{ \ \ for \ \ }(p\mid Q,\ p\nmid q_1q_2q_3).
\end{aligned}
\]

The first row gives \eqref{app:congruences} and \eqref{app:congruences-second} immediately. At a
prime with $p^e\Vert q_2$, the second row gives
\[
1-rw\equiv p^e\pmod{p^{e+1}}
\text{ \ \ and \ \ }1-sw\equiv1-sr^{-1}=(r-s)r^{-1}\pmod p.
\]
The second factor is nonzero $\pmod p$ by choice of $W$, since
$p\nmid r(r-s)$, so $\nu_p(P_2(1,w))=e$. Also, $w$ is nonzero $\pmod p$, and
\[
P_3(1,w)\equiv r^{-2}P_3(r,1)\not\equiv0\pmod p.
\]

For a prime with $p^e\Vert q_3$, by completing the square, we have
\[
4\lambda P_3(1,w-p^e)
\equiv R_{p,e}^2-D\equiv0\pmod{p^{e+1}},
\]
and adding $p^e$ to this root gives
\[
P_3(1,w)\equiv R_{p,e}p^e\pmod{p^{e+1}}.
\]
Since $R_{p,e}$ is nonzero $\pmod p$, this gives
$\nu_p(P_3(1,w))=e$. Also, $w$ is nonzero $\pmod p$, since
$P_3(1,0)=1$. Neither factor of $P_2(1,w)$ can vanish $\pmod p$: a root $w=r^{-1}$ would give $P_3(r,1)\equiv0\pmod p$,
and, if $p\nmid s$, a root $w=s^{-1}$ would give
$P_3(s,1)\equiv0\pmod p$. If $p\mid s$, the latter factor is
simply $1$ $\pmod p$.

Finally, for a prime in the last row, homogeneity gives
\[
P_j(1,w)\equiv c^{-2}P_j(c,1)\not\equiv0\pmod p
\;(j=1,2,3).
\]
We know $c$ is invertible by the definition of $W$. Thus,
as before, the congruences give \eqref{app:gcd} as well as
\eqref{app:congruences} and \eqref{app:congruences-second}. Positivity holds when $n/m$ is
sufficiently large, and the sector condition follows from
Lemma~\ref{app:sector-lemma}.

The identities
\[
P_3-P_2-(h+r+s)P_1=(\lambda-rs)m^2
\]
and
\[
\lambda P_2-rsP_3+(\lambda(r+s)+rsh)P_1=(\lambda-rs)n^2
\]
give \eqref{app:irreducible-equation} after multiplying and
substituting \eqref{app:conclusion}.
\end{proof}

\begin{proof}[Proof of Corollary~\ref{app:split}]
We use
\[
P_1(n,m)=nm,\; P_2(n,m)=(n-rm)(n-sm)
\text{ \ \ and \ \ }P_3(n,m)=(n-tm)(n-um).
\]
The polynomials split into the linear factors $n$, $m$, $n-rm$,
$n-sm$, $n-tm$, and $n-um$. This gives the factorization conditions.

Similarly to the proof of Corollary~\ref{app:irreducible}, let
\[
c=1+|r|+|s|+|t|+|u|,
\]
and choose
\[
W=\left|30rt(r-s)(t-u)(r-t)(r-u)(t-s)cP_2(c,1)P_3(c,1)\right|.
\]
For admissible $q_1,q_2,q_3$ and $Q$, choose $w$ to satisfy
\[
\begin{aligned}
w&\equiv q_1 &&\pmod{q_1^2},\\
w&\equiv r^{-1}(1-p^e) &&\pmod{p^{e+1}} &&\text{ \ \ for \ \ }(p^e\Vert q_2),\\
w&\equiv t^{-1}(1-p^e) &&\pmod{p^{e+1}} &&\text{ \ \ for \ \ }(p^e\Vert q_3),\\
w&\equiv c^{-1} &&\pmod p &&\text{ \ \ for \ \ }(p\mid Q,\ p\nmid q_1q_2q_3).
\end{aligned}
\]
The first and last rows work exactly as in the previous example.
If $p^e\Vert q_2$, then
\[
1-rw\equiv p^e\pmod{p^{e+1}}
\text{ \ \ and \ \ }1-sw\equiv1-sr^{-1}=(r-s)r^{-1}\pmod p,
\]
whereas
\[
P_3(1,w)\equiv(r-t)(r-u)r^{-2}\not\equiv0\pmod p.
\]
By the definition of $W$, $p\nmid r(r-s)(r-t)(r-u)$. Thus
$\nu_p(P_2(1,w))=e$, while $P_1(1,w)$ and $P_3(1,w)$ are
nonzero $\pmod p$. Similarly, if $p^e\Vert q_3$, then
\[
1-tw\equiv p^e\pmod{p^{e+1}}
\text{ \ \ and \ \ }1-uw\equiv1-ut^{-1}=(t-u)t^{-1}\pmod p,
\]
whereas
\[
P_2(1,w)\equiv(t-r)(t-s)t^{-2}\not\equiv0\pmod p.
\]
By the definition of $W$, $p\nmid t(t-u)(t-r)(t-s)$. Thus
$\nu_p(P_3(1,w))=e$, while $P_1(1,w)$ and $P_2(1,w)$ are
nonzero $\pmod p$. This proves the modular condition. The remaining positivity
and sector conditions follow from Lemma~\ref{app:sector-lemma}.

As in the previous case,
\[
P_3-P_2-(r+s-t-u)P_1=(tu-rs)m^2
\]
and
\[
tuP_2-rsP_3+(tu(r+s)-rs(t+u))P_1=(tu-rs)n^2.
\]
Multiplying and substituting \eqref{app:conclusion} gives
\eqref{app:split-equation}.
\end{proof}

The same verification applies to any finite collection of these
examples. For example, we have the following corollary.

\begin{cor}\label{app:simultaneous-examples}
Let $a,b,\lambda\in\N$ and let $k\geq2$ be an integer. If
$A\subseteq\N$ and $B\subseteq\N^3$ have positive multiplicative
upper density, then there are $w,x_1,y_1,x_2,y_2,x_3,y_3\in A$
and $(z_1,z_2,z_3)\in B$ such that
\[
a^2y_1^2+b^2(x_1z_1)^2=a^2w^2,
\]
\[
(w-y_2-x_2z_2)(w+\lambda y_2-x_2z_2)
=(\lambda+1)^2(x_2z_2)^2,
\]
and
\[
(y_3-w+x_3z_3)(k(k+1)y_3-w+x_3z_3)
=(k(k+1)-1)^2(x_3z_3)^2.
\]
In particular, every finite coloring of $\N$ admits a monochromatic
solution to these three equations with the same value of $w$.
\end{cor}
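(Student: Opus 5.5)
The plan is to apply Theorem~\ref{intro:general} with $r=3$ to the collection of three triples already built in the appendix. Let $(P_{1,1},P_{1,2},P_{1,3})$ be the triple from the proof of Corollary~\ref{app:scaled}, namely $(2anm,\,n^2-b^2m^2,\,n^2+b^2m^2)$. Let $(P_{2,1},P_{2,2},P_{2,3})$ be the triple from Corollary~\ref{app:irreducible} with $r=1$, $s=-1$, $h=1$ and the given $\lambda$. Let $(P_{3,1},P_{3,2},P_{3,3})$ be the triple from Corollary~\ref{app:split} with $r=1$, $s=-1$, $t=k$, $u=-k-1$.

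The first step is to check that each triple is good for multiplicative recurrence. This has already been done inside the proofs of the three corollaries, so I only need to verify that the parameter hypotheses hold.
\begin{enumerate}
\item For the second triple, $r=1\neq0$ and $r\ne s$. We have $\lambda\neq rs=-1$ because $\lambda\in\N$. Also $h^2-4\lambda=1-4\lambda<0$, so it is not a square.
\item For the third triple, the product $rt(r-s)(t-u)(r-t)(r-u)(t-s)(tu-rs)$ equals
\[
k\cdot2\cdot(2k+1)(1-k)(k+2)(k+1)\bigl(1-k(k+1)\bigr).
\]
This is nonzero exactly because $k\geq2$.
\end{enumerate}
By the last sentence of Definition~\ref{def:good-multiplicative-recurrence}, a collection is good as soon as each of its triples is. The constants $W$, $\kappa$ and the sector intervals may depend on $i$. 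So the collection is good.

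Next, Theorem~\ref{intro:general} applied to $A$ and $B\subseteq\N^3$ gives $a'\in A$, $\ell=(\ell_1,\ell_2,\ell_3)\in B$ and $n_i,m_i$ for $i=1,2,3$ with
\[
a'P_{i,2}/P_{i,3}\in A\quad\text{and}\quad a'P_{i,1}/(\ell_iP_{i,3})\in A .
\]
For each $i$, set $k_i=a'/P_{i,3}(n_i,m_i)$ and take
\[
w=a',\quad y_i=k_iP_{i,2},\quad x_i=k_iP_{i,1}/\ell_i,\quad z_i=\ell_i .
\]
Then $w=k_iP_{i,3}$ holds for every $i$ at once. This common value of $w$ is the key point, and it is why the single shared $a$ in Theorem~\ref{intro:general} matters. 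The three polynomial identities recorded in the proofs of the corollaries are homogeneous of degree $4$ in $(P_1,P_2,P_3)$. Scaling them by $k_i^4$ and substituting $x_iz_i=k_iP_{i,1}$ gives the three equations. For the third equation I would double-check the sign normalization against the concrete example stated after Corollary~\ref{app:split}.

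For the coloring statement, one color class $C$ has positive multiplicative upper density. I then take $A=C$ and $B=C^3$. The remaining task is to see that $C^3$ has positive upper density in $\N^3$ along the product F\o lner sequence $\Phi_H\times\Phi_H\times\Phi_H$. This is the only mildly delicate step. It works because the density of $C$ along $\Phi_H$ is a limsup, so along a subsequence $H$ on which $C$ has density $\geq c$, the set $C^3$ has density $\geq c^3$. I would pass to that subsequence to form the F\o lner sequence used for $B$.

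There is no real obstacle in this argument. The only care needed is in confirming the nonvanishing conditions above and in keeping the normalization of the identities consistent.
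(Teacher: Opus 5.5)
Your proposal is correct and matches the paper's proof, which is just an application of Theorem~\ref{intro:general} with $r=3$ to the triples from Corollaries~\ref{app:scaled}, \ref{app:irreducible} and~\ref{app:split}. You also correctly fill in what the paper leaves implicit: the parameter checks, the common value $w=a'$, the simultaneous sign flip of both factors in the third equation, and the coloring deduction via $B=C^3$. One small slip: the identities are homogeneous of degree $2$ in $(P_1,P_2,P_3)$ (degree $4$ in $(n,m)$), so you should scale by $k_i^2$, not $k_i^4$, before substituting $y_i=k_iP_{i,2}$, $w=k_iP_{i,3}$, $x_iz_i=k_iP_{i,1}$.
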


\begin{proof}
Apply Theorem~\ref{intro:general} to the three polynomial triples
used for these examples in Corollaries~\ref{app:scaled},
\ref{app:irreducible}, and~\ref{app:split}.
\end{proof}

We can also iterate the Pythagorean result, using each hypotenuse
as a leg in the next triple.

\begin{cor}\label{app:iterated-pythagorean}
Let $A,B\subseteq\N$ have positive multiplicative upper density and
let $r\in\N$. Then there are
$y_0,\ldots,y_r,z_1,\ldots,z_r\in A$ and $x_1,\ldots,x_r\in B$ such that
\[
y_{i-1}^2+(x_i z_i)^2=y_i^2\;(i=1,\ldots,r).
\]
\end{cor}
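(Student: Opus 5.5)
We deduce this from Theorem~\ref{intro:general} applied to $r$ copies of the Pythagorean triple of Corollary~\ref{app:scaled} with $a=b=1$. The key point is that the theorem gives us a single common point $a\in A$ for all $r$ triples, and we will chain the triples through it.

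We take, for each $1\le i\le r$, the triple $(P_{i,1},P_{i,2},P_{i,3})=(2nm,\,n^2-m^2,\,n^2+m^2)$. The proof of Corollary~\ref{app:scaled} already verified that this triple is good for multiplicative recurrence (with $W=120$), so the collection is good. We apply the theorem with the set $A$ and with $B^r\subseteq\N^r$ in place of $B$. That $B^r$ has positive multiplicative upper density in $\N^r$ follows by taking the product F\o lner sequence $\Phi_H^r$, passing to a subsequence along which $B$ has density close to $d_\Phi(B)$, and observing that the density of $B^r$ along it is at least $d_\Phi(B)^r$.

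The theorem yields $a\in A$, $\ell\in B^r$ and $(n_i,m_i)$. Write $U_{i,j}=P_{i,j}(n_i,m_i)$. Then
\[
u_i:=a\,\frac{U_{i,2}}{U_{i,3}}\in A
\text{ \ \ and \ \ }
v_i:=a\,\frac{U_{i,1}}{\ell_iU_{i,3}}\in A .
\]
From $U_{i,2}^2+U_{i,1}^2=U_{i,3}^2$ we get
\[
u_i^2+(v_i\ell_i)^2=a^2 ,
\]
so each triple has leg $u_i$, product leg $v_i\ell_i$, and hypotenuse $a$. All $r$ triples share the same hypotenuse $a$, so they do not yet form a chain $y_{i-1}\to y_i$.

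To obtain the chain, we use the triples telescopically. Set $y_r=a$ and try $y_{r-1}=u_r$. Then the next step requires a triple whose hypotenuse is $u_r$, not $a$. The shared hypotenuse does not give this directly, so we strengthen the application instead. The dynamical Theorem~\ref{intro:dynamical} controls $\int F\prod_i T_{\cdot}F\,T_{\cdot}F\,d\mu$ with all shifts measured from the base point. We therefore reparametrize the $i$-th triple so that its shifts are composed with the previous ones, replacing $n_i,m_i$-ratios by products $\prod_{k\le i}U_{k,2}/U_{k,3}$. This is the main obstacle, and we attempt it in two ways, in order.

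First, we try applying Theorem~\ref{intro:general} with $r=1$ iteratively. At each stage we replace $A$ by the set
\[
A'=\{s\in A:\ \text{the chain built so far, rescaled to end at } s, \text{ lies in } A\}.
\]
This set has positive density by the quantitative lower bound $\beta\mu(E)^{2r+1}-\eta$ together with the Furstenberg correspondence, which gives a positive-density set of dilates $k$. The hypotenuse at step $i$ becomes the leg at step $i+1$. All elements are then in $A$, and the factors $\ell_i$ lie in $B$.

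If the density-of-dilates bookkeeping fails (the $\ell$'s must also be compatible), we fall back on the second approach. We redo the argument of Section~2 directly for the composed shifts. The independence in the $\mathbf q_i$, the reduction to the exact and compact Archimedean factors, and Chu's inequality go through verbatim. The sectors force each composed ratio $u$ to have phase near $1$, so every shift can again be replaced by $F_1$ or $F_2$, giving the bound $\int F(F_1F_2)^r\,d\mu>0$.
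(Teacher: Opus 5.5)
Your first approach is correct and follows the paper's induction: apply the single-triple result to the set of endpoints of shorter chains, so that the leg $u$ of the new triple is itself an old endpoint and the new hypotenuse extends the chain. Where you differ is in how you keep that endpoint set dense.

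The paper stays purely combinatorial. Let $C$ be the set of endpoints of length-$(r-1)$ chains and $D\subseteq C$ those that do not extend. Applying Theorem~\ref{intro:density} to $D$ itself would extend an element of $D$. Hence $D$ has zero upper density, and $C\setminus D$ keeps the positive density of $C$.

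You instead fix one chain and use the Furstenberg correspondence to get a positive-density set of dilates $s$ for which the whole rescaled chain lies in $A$. You should say explicitly that this chain comes from a configuration $(\ell,n,m)$ whose multiple-recurrence integral is positive. The quantitative bound $\beta\mu(E)^{2r+1}-\eta$ plays no role beyond positivity. Your route needs more than the statement of Theorem~\ref{intro:general}: it uses the correspondence output described in the introduction. In return it gives slightly more, namely positive density of dilates of a single chain shape.

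Your worry that the $\ell$'s must be ``compatible'' is unfounded, since dilation fixes every $x_i$: $(sy_{i-1})^2+(x_i\,sz_i)^2=(sy_i)^2$. So the fallback is unnecessary. That is fortunate, because as written it is not a proof. Composed shifts such as $T_{U_{2,2}U_{1,1}/(U_{2,3}\ell_1U_{1,3})}$ couple the auxiliary and polynomial variables of different steps. The independence and aperiodicity arguments of Section~2 would have to be redone for them, not repeated verbatim.

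The opening detour through $B^r$ and a shared hypotenuse can be dropped.
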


\begin{proof}
We prove by induction that the set of possible final values $y_r$
has positive multiplicative upper density. For $r=0$, this set is
$A$. Suppose the claim holds for $r-1$, and let $C\subseteq A$ be
the set of possible final values of chains of length $r-1$.
Let $D\subseteq C$ consist of the values that cannot be the final
value of a chain of length $r$.

If $D$ had positive multiplicative upper density, then
Theorem~\ref{intro:density}, applied to $D$ and $B$, would give
$u,z,y\in D$ and $x\in B$ such that
\[
u^2+(xz)^2=y^2.
\]
Since $u\in C$, we can append this equation to a chain of length
$r-1$ ending at $u$, giving a chain of length $r$ ending at $y$.
This contradicts $y\in D$. Thus $D$ has zero multiplicative upper
density, so $C\setminus D$ has positive multiplicative upper
density. This proves the induction step.
\end{proof}

\providecommand{\bysame}{\leavevmode\hbox to3em{\hrulefill}\thinspace}
\providecommand{\MR}{\relax\ifhmode\unskip\space\fi MR }
\providecommand{\MRhref}[2]{%
  \href{http://www.ams.org/mathscinet-getitem?mr=#1}{#2}
}
\providecommand{\href}[2]{#2}

\end{document}